%% file: artigo_unificado.tex
\documentclass[11pt,a4paper]{amsart}
\usepackage[utf8]{inputenc}
\usepackage[T1]{fontenc}
\usepackage{amsmath,amssymb,amsthm,mathtools}
\usepackage{booktabs}
\usepackage{enumitem}
\usepackage{seqsplit}
\usepackage{graphicx}
\usepackage[margin=2.6cm]{geometry}
\usepackage{xcolor}
\usepackage{hyperref}
\hypersetup{colorlinks=true,linkcolor=blue!55!black,citecolor=teal!60!black,urlcolor=purple!60!black}

\newcommand{\Q}{\mathbb{Q}}
\newcommand{\R}{\mathbb{R}}

\newcommand{\Z}{\mathbb{Z}}
\DeclareMathOperator{\Gal}{Gal}
\DeclareMathOperator{\Res}{Res}
\DeclareMathOperator{\supp}{supp}

\newtheorem{teorema}{Theorem}
\newtheorem{proposicao}[teorema]{Proposition}

\newtheorem{lema}[teorema]{Lemma}
\theoremstyle{definition}
\newtheorem{definicao}[teorema]{Definition}
\newtheorem{observacao}[teorema]{Remark}
\newtheorem{algoritmo}[teorema]{Algorithm}

\title[The Exoo--Ismailescu pair: origami ($12T236$) vs.\ non-solvable ($S_{20}$)]{Exact
certification of the coordinate fields of the triangle-free Exoo--Ismailescu
unit-distance graphs $EI_{17}$ and $EI_{19}$ (\textsc{HoG} 51375, 51376): a
solvable--non-solvable dichotomy (origami vs.\ $S_{20}$) and the Laman-number conjecture}

\author{Haroldo Costa Silva Filho}
\address{CEFET-RJ; PGCComp --- Programa de P\'os-Gradua\c{c}\~ao em Ci\^encias
Computacionais, Universidade do Estado do Rio de Janeiro (UERJ), Rio de Janeiro,
Brazil}
\email{haroldosfilho@gmail.com}
\date{\today}

\begin{document}

\begin{abstract}
We certify, exactly, the coordinate fields of a faithful planar realization of \emph{two}
neighbouring triangle-free Exoo--Ismailescu unit-distance graphs (UDGs), and show they
realize the two \emph{opposite} extremes of the constructibility hierarchy. The
$17$-vertex graph $EI_{17}$ (\textsc{House of Graphs} 51375) is the \emph{smallest}
triangle-free UDG with chromatic number~$4$ \cite{Soifer}; the $19$-vertex graph
$EI_{19}$ (\textsc{HoG} 51376) is its state-of-the-art origami neighbour. In both, fixing
a rational base edge, the remaining vertices are intersections of unit circles --- each
on the \emph{radical axis} of its two neighbours, a tower of square roots over the free
angles --- and a small closure system locks the realization. For $EI_{19}$ the base lies
in $\Q(\sqrt2,\sqrt5,\sqrt7)$ and a single free angle has an \emph{irreducible} degree
$12=2^2\cdot3$ minimal polynomial with Galois group the \emph{solvable} transitive group
$12T236$ (order $2304=2^8\cdot3^2$): not ruler-and-compass, but \emph{origami}-constructible
(the cubic Beloch fold $O6$ necessary, in \emph{casus irreducibilis}). For $EI_{17}$ two
free angles $\theta_4,\theta_5$ are locked by two closures, whose resultant
$\Res_{\theta_5}(g_1,g_2)$ is \emph{irreducible} of degree $\mathbf{20}=2^2\cdot5$ with
Galois group the \emph{full symmetric group} $S_{20}$ (a Frobenius census exhibits a
$17$-cycle, forcing $A_{20}$ by Jordan, and an odd $20$-cycle, raising it to $S_{20}$):
\emph{non-solvable}, so the coordinates are \emph{not expressible by radicals} --- neither
compass nor origami of any fold order. Thus the smallest triangle-free $4$-chromatic UDG
is the \emph{generic}, maximally exotic case, the exact opposite of its origami neighbour.
We give the full certification pipeline as explicit algorithms, record two methodological
pitfalls, and read the pair through a conjectural bridge between the \emph{Laman number}
and the Galois group.
\end{abstract}

\maketitle

\section{Introduction}
A \emph{unit-distance graph} (UDG) in the plane is a graph $G=(V,E)$ with a map
$p:V\to\R^2$ such that $\lVert p(u)-p(v)\rVert=1$ whenever $uv\in E$; the realization is
\emph{faithful} (strict) if moreover $\lVert p(u)-p(v)\rVert\neq1$ for every
$uv\notin E$, so the edges are \emph{exactly} the unit-distance pairs. UDGs lie at the
heart of the Hadwiger--Nelson problem on the chromatic number of the plane,
$\chi(\R^2)\in\{5,6,7\}$ \cite{deGrey,Soifer}, whose lower bound was raised to $5$ by de
Grey \cite{deGrey} and revisited by Exoo and Ismailescu \cite{ExooIsmailescu}.

\begin{observacao}[Extremal geometry of unit distances]\label{obs:extremal}
Two extremal problems frame this graph. The \emph{Hadwiger--Nelson} problem asks the
least number of colours so that no two points at distance $1$ share a colour; finite
UDGs of large chromatic number, such as the Exoo--Ismailescu family, drive its lower
bound \cite{deGrey,ExooIsmailescu,Soifer}. The \emph{Erdős unit-distance problem} (1946)
asks the maximum number $u(n)$ of unit-distance pairs among $n$ planar points
\cite{Erdos1946}; Erdős conjectured $u(n)=n^{1+o(1)}$, a belief only recently overturned
by an explicit family with $n^{1+\delta}$ pairs for a fixed $\delta>0$ \cite{ErdosUD}. In
both problems the objects are small, rigid, triangle-free UDGs; determining their
coordinate fields exactly --- the aim here --- is the arithmetic counterpart of these
extremal questions.
\end{observacao}

The $17$-vertex Exoo--Ismailescu graph \cite{ExooIsmailescu}, catalogued as
\textsc{HoG}~51375, is the \emph{smallest} triangle-free planar UDG with chromatic
number~$4$ (Soifer \cite{Soifer}). It has $|E|=31=2n-3$ edges with $n=17$, hence is
\emph{isostatic} (minimally rigid in the sense of Laman \cite{Laman,GSS}); it is
triangle-free of girth~$4$, with degree sequence six $3$'s and eleven $4$'s and
automorphism group of order~$2$. Its $19$-vertex sibling \textsc{HoG}~51376 is treated
in the companion paper \cite{ei19}, where its field is shown origami-constructible
(degree $12$, Galois group $12T236$, solvable).

For $51375$ we answer the arithmetic--geometric question --- in which number field do
the coordinates of a faithful realization live, and how constructible is that field? ---
and the answer (Theorem~\ref{teo:principal}) is the \emph{opposite} of the $19$-vertex
case: the field is a degree-$20$ extension whose Galois group is the full symmetric
group $S_{20}$, hence \emph{not solvable by radicals}. This answers, for this graph, a
question of Edward Pegg~Jr.\ on the coordinate fields of faithful realizations of small
Laman and unit-distance graphs, and exhibits the smallest triangle-free $4$-chromatic
UDG as a maximally non-constructible instance.

\paragraph{Edge list ($31$).}
$(1,4)$, $(1,7)$, $(1,8)$, $(2,3)$, $(2,6)$, $(2,10)$, $(3,5)$, $(3,7)$, $(3,12)$,
$(4,6)$, $(4,13)$, $(4,14)$, $(5,8)$, $(5,9)$, $(5,14)$, $(6,11)$, $(6,16)$, $(7,9)$,
$(7,15)$, $(8,13)$, $(8,15)$, $(9,10)$, $(9,11)$, $(10,12)$, $(10,16)$, $(11,12)$,
$(11,17)$, $(13,17)$, $(14,15)$, $(14,17)$, $(16,17)$.

\begin{figure}[htbp]
\centering
\includegraphics[width=.7\textwidth]{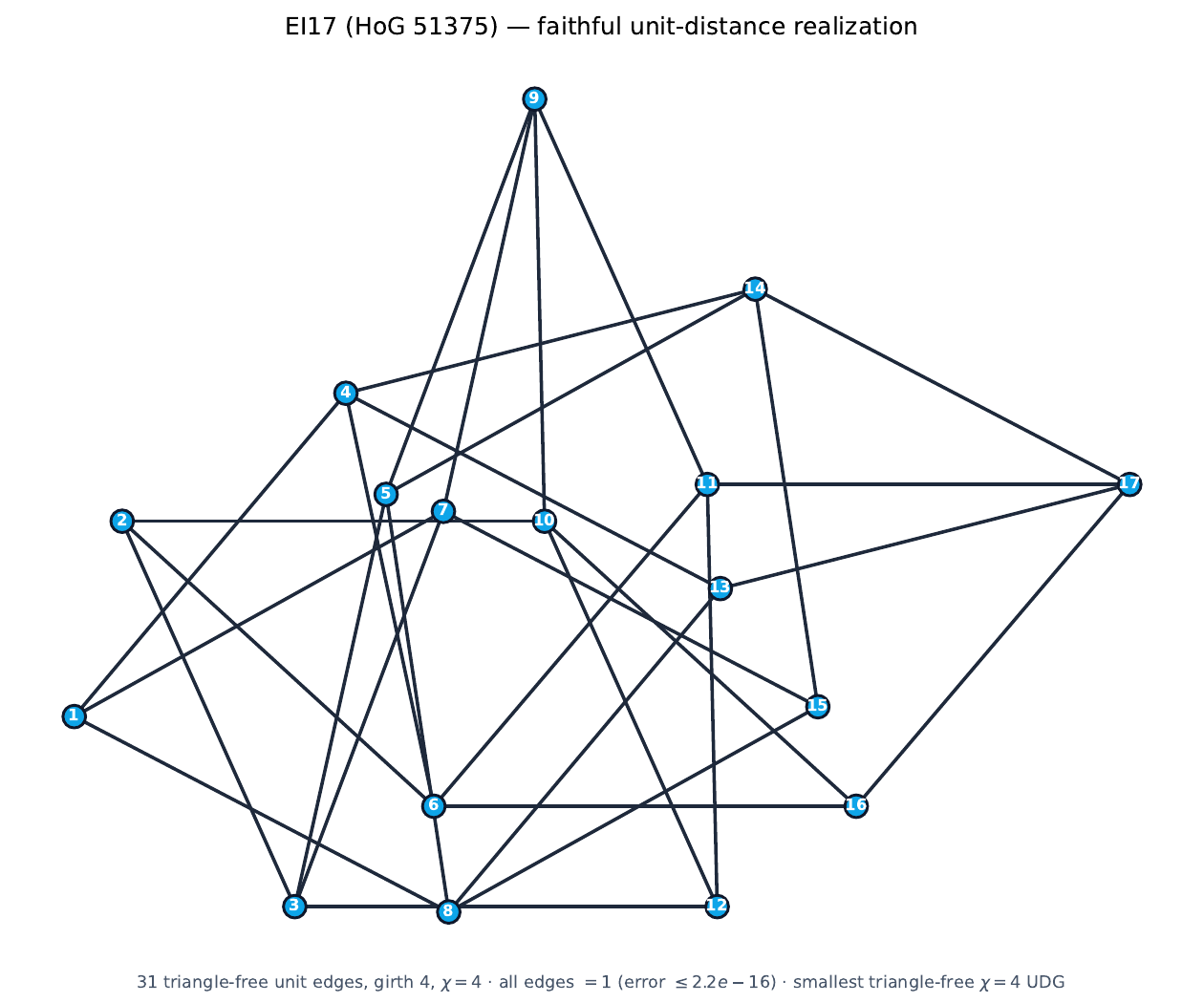}
\caption{The certified faithful realization of $EI_{17}$ (\textsc{HoG} 51375): $31$
triangle-free unit edges, girth~$4$, $\chi=4$. All edges equal $1$ to
$e_{\max}\le2\cdot10^{-16}$ (double precision) and $<10^{-400}$ after Newton refinement;
no non-edge is at distance~$1$ (strict).}
\label{fig:real}
\end{figure}

\section{Preliminaries}\label{sec:prelim}

\subsection{Rigidity, isostaticity, realizations}
The \emph{rigidity matrix} of a realization $p$ of $G=(V,E)$ is the $|E|\times2|V|$
matrix whose row for $uv\in E$ has $p(u)-p(v)$ in the columns of $u$ and $p(v)-p(u)$ in
those of $v$ \cite{GSS,Laman}. A realization is \emph{infinitesimally rigid} when this
matrix has rank $2|V|-3$; a graph is \emph{generically rigid} (Laman) iff it has a
spanning subgraph with $2|V|-3$ edges no subset $F$ of which exceeds $2|V(F)|-3$
\cite{Laman}. Here $|E|=2|V|-3$, so $G$ is \emph{isostatic}: at a regular realization
the only infinitesimal motions are the plane isometries. Fixing a gauge (an edge) makes
the realization \emph{isolated} --- a point of a $0$-dimensional variety of the
unit-distance equations --- which is what gives meaning to \emph{the coordinate field}.

\begin{definicao}[Coordinate field]\label{def:field}
For a faithful realization $p$ of an isostatic $G$ with a fixed rational gauge, the
\emph{coordinate field} is $K=\Q\bigl(\{p(v):v\in V\}\bigr)\subseteq\R$. When $G$ is not
globally rigid \cite{Connelly} there are several real realizations, possibly in distinct
fields; $K$ is that of the chosen (here, rational-based) realization.
\end{definicao}
\subsection{Maehara's theorem: rigid distances are algebraic}\label{ssec:maehara}
That $K$ (Definition~\ref{def:field}) is a genuine \emph{number field} rests on a theorem
of Maehara \cite{Maehara}. Call a graph drawn in the plane \emph{flexible} if some
nontrivial continuous motion of its vertices preserves all edge lengths, and
\emph{rigid} otherwise; an isostatic graph at a regular realization is rigid. Let
$\Gamma$ be the set of Euclidean distances occurring between two vertices of some rigid
\emph{unit-distance} graph, and $A^{+}$ the set of positive algebraic numbers.

\begin{teorema}[Maehara \cite{Maehara}]\label{teo:maehara}
$\Gamma=A^{+}$: a real $d>0$ is the distance between two vertices of some rigid
unit-distance graph in the plane if and only if $d$ is algebraic.
\end{teorema}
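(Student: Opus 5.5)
Both inclusions need proof.

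\emph{Easy direction} ($\Gamma\subseteq A^{+}$). Let $G$ be a rigid unit-distance graph with realization $p$. Fix a gauge: put one vertex at the origin and a unit-distance neighbour at $(1,0)$. The realizations of $G$ satisfying this gauge form a real algebraic set $V\subseteq\R^{2|V(G)|}$. It is cut out by the equations $\lVert x_u-x_v\rVert^2=1$ for $uv\in E$, whose coefficients are rational. We would like $p$ to be an isolated point of $V$, since then its coordinates are algebraic.

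Rigidity alone does not quite give this. A rigid framework could still sit on a positive-dimensional real component if that component contained no continuous path starting at $p$, but this cannot happen: real semialgebraic sets are locally path-connected (curve selection lemma). So if $p$ is not isolated in $V$, there is a nonconstant analytic arc through $p$ inside $V$, i.e.\ a flex, contradicting rigidity. Hence $p$ is an isolated point of $V$, so $\{p\}$ is a semialgebraic set defined over $\Q$. By Tarski--Seidenberg, each coordinate of $p$ is then a real algebraic number. Distances between vertices are square roots of polynomial expressions in these coordinates, so they lie in $A^{+}$.

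\emph{Hard direction} ($A^{+}\subseteq\Gamma$). This is the main obstacle, and it is a construction of Kempe-linkage type. The plan has four steps.
\begin{enumerate}
\item Build rigid unit-distance gadgets that realize the field operations on distances. Start from the rhombic and triangular rigid unit graphs: the equilateral triangle, and the Moser spindle, which rigidly realizes certain distances such as $\sqrt3$. From these, build gadgets whose vertex distances realize $a+b$, $a-b$, $ab$, $a/b$, together with "rigid copies" that transfer a given distance to a new pair of vertices.
\item Show that for a polynomial $f\in\Z[t]$ with a simple real root $d$, one can rigidify a configuration of unit-distance graphs. In this configuration the lengths $1,d,d^2,\dots,d^n$ are linked together, and the closure $\sum a_i d^i=0$ forces the free parameter to a finite set of values.
\item Isolate $d$ among the roots of $f$ by adding extra rigid constraints that fix an interval containing $d$. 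Concretely, make the free distance also satisfy rigid inequalities encoded as bracing by rational-length triangles.
\item Verify rigidity of the whole glued graph, using the fact that gluing rigid bodies along two vertices preserves rigidity.
\end{enumerate}

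Step 2 is the crux. Ensuring that the glued linkage is genuinely rigid, and not merely finitely flexible, while using only unit edges requires care. I would follow Maehara's explicit construction and cite \cite{Maehara} for the details.
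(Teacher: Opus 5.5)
Your forward inclusion $\Gamma\subseteq A^{+}$ uses the same idea as the paper's sketch: a gauged rigid drawing is an isolated solution of a polynomial system over $\Q$. It is, however, more careful than the paper's version. The paper simply asserts isolation, crediting Maehara's argument via real-algebraic critical values. Your curve-selection step is exactly what turns ``no continuous flex'' into ``isolated point of $V$''.

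One sentence needs repair: $\{p\}$ is not a priori defined over $\Q$. Argue instead as follows. The set $I$ of isolated points of $V$ is first-order definable with rational parameters, so it is $\Q$-semialgebraic by Tarski--Seidenberg. It is finite, because each isolated point is a connected component and a semialgebraic set has finitely many. Its coordinate projections are then finite $\Q$-semialgebraic subsets of $\R$, so each coordinate of $p$ is a root of a nonzero rational polynomial.

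For $A^{+}\subseteq\Gamma$ you do not give a proof: the crux is deferred to \cite{Maehara}. The paper does the same, so in both the converse stands only by citation. But the outline you wrap around the citation is not the construction the paper attributes to Maehara, and parts of it would not survive being carried out.

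\emph{Angles versus lengths.} The paper's description of Maehara's construction works with \emph{angles}. Antiparallelograms and reversers $R(O;A,B,C)$ enforce equal angles using equal bars, the $n$-fan multiplies an angle, and separate gadgets give $nd$ and $c^{2}/d$ from rigid distances. This fits unit bars naturally. Your plan does arithmetic on \emph{lengths} instead, and then the Step~1 gadgets cannot be rigid. Combining rigid distances only shows closure under field operations. Step~2 needs functional linkages acting on a moving length $t$, with output locally determined by $t$. Multiplying or dividing variable lengths with unit bars needs extra machinery, such as inversors and straight-line motion.

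\emph{Step~3.} It is unnecessary: rigidity is a property of a single drawing, so you may simply take the drawing with parameter value $d$. As phrased it is also not implementable. Bars impose only equalities, so an interval condition would need a slack variable, e.g.\ $(t-\alpha)(\beta-t)=s^{2}$.

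\emph{Step~4.} It misplaces the source of rigidity. Before closure the assembly is a one-degree-of-freedom mechanism, not a union of rigid bodies glued along pairs of vertices. The final drawing is rigid only because the closure cuts that one-parameter curve at an isolated point. That requires $d$ to be a simple root \emph{and} every gadget to be non-degenerate at that drawing: no coincident joints and no parallelogram--antiparallelogram flips. This Kempe-type degeneracy is exactly what an explicit construction such as Maehara's must control.
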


The forward inclusion $\Gamma\subseteq A^{+}$ is the point we use: once a gauge is fixed,
the vertices of a rigid graph are \emph{isolated} solutions of a polynomial system with
rational coefficients, so their coordinates --- and all mutual distances --- are
algebraic (Maehara derives it from a theorem on real-algebraic critical values
\cite{Maehara}). The converse $A^{+}\subseteq\Gamma$ is \emph{constructive}: Maehara
realizes any algebraic $d$ by assembling classical Kempe linkages --- the
\emph{antiparallelogram}, the \emph{reverser} $R(O;A,B,C)$ (four rods forming two
antiparallelograms, enforcing $\angle AOB=\angle COB$), and the \emph{$n$-fan} built from
reversers --- together with gadgets that produce $nd$ and $c^{2}/d$ from given rigid
distances. This step-by-step assembly of rigid pieces is the arithmetic ancestor of the
circle-intersection construction of \S\ref{sec:constr} and of the universality theorems
of Mnëv and Kapovich--Millson \cite{Mnev,KapovichMillson} behind the $\exists\R$ hardness
of \S\ref{sec:er}. Theorem~\ref{teo:maehara} is an \emph{existence/universality}
statement (\emph{every} algebraic number occurs as some rigid-UDG distance); the present
paper solves the exact \emph{converse} for one fixed graph --- determining \emph{which}
algebraic numbers are the coordinates of $EI_{17}$, namely a degree-$20$ field with
Galois group $S_{20}$.

\subsection{The number of realizations (Laman number)}
Over a generic choice of edge lengths $\lambda$, the complex solutions of the
distance equations (modulo isometry) form a finite set; its cardinality $N(G)$ is the
\emph{Laman number}, a combinatorial invariant computable by the recursion of
Capco--Gallet--Grasegger--Koutschan--Lubbes--Schicho \cite{Capco} and bounded above by
the mixed volume (BKK) of the system. Over the generic base $K_0=\Q(\lambda)$ the field
of one realization has degree exactly $N(G)$ over $K_0$, and the monodromy of the family
--- equivalently the Galois group --- acts transitively on the $N(G)$ realizations
\cite{Capco,GKKPS}.

\subsection{Constructibility: compass, origami, and beyond}
\begin{definicao}[Constructibility tiers]\label{def:tiers}
A real number $\alpha$ is:
\begin{enumerate}[leftmargin=2em,itemsep=1pt]
\item \emph{ruler-and-compass constructible} iff it lies in a tower of \emph{quadratic}
extensions of $\Q$; equivalently $[\Q(\alpha):\Q]$ is a power of $2$ and the Galois group
of its normal closure is a $2$-group (Gauss--Wantzel) \cite{Cox};
\item \emph{origami-constructible} (single-fold, Huzita--Hatori axioms, with the cubic
Beloch fold $O6$) iff it lies in a $\{2,3\}$-tower --- a Pierpont number, Galois group a
$\{2,3\}$-group \cite{Alperin,Huzita,Hull,OllerMarcen}; the mathematics of paper folding
(\emph{origametry}) is developed systematically by Hull \cite{Hull};
\item \emph{expressible by radicals} iff the Galois group of its minimal polynomial is
\emph{solvable} (Galois) \cite{Cox}.
\end{enumerate}
A field none of whose generators is radical-expressible we call \emph{exotic}. The tier
is read off the prime support of the degree/group order: $\{2\}$ (compass), $\{2,3\}$
(origami), or a prime $\ge5$ (exotic).
\end{definicao}
Multifold origami (Alperin--Lang) extends single-fold origami but still produces only
radical (indeed, for bounded fold number, solvable) numbers; a non-solvable field lies
beyond all of these.

\subsection{Permutation groups, solvability, Jordan's theorem}\label{ssec:groups}
We recall the group theory used in the Galois classification
\cite{DixonMortimer,Wielandt,Cox}.
\begin{definicao}\label{def:groups}
The \emph{symmetric group} $S_n$ is the group of all permutations of $\{1,\dots,n\}$
(order $n!$); the \emph{alternating group} $A_n\le S_n$ is the subgroup of \emph{even}
permutations --- products of an even number of transpositions (order $n!/2$). A cycle of
length $\ell$ is even iff $\ell$ is odd. A group $G$ is \emph{solvable} if it admits a
subnormal series $\{e\}=G_0\triangleleft\cdots\triangleleft G_r=G$ with all quotients
$G_{i+1}/G_i$ \emph{abelian} (equivalently, its derived series reaches $\{e\}$). A
non-abelian group with no proper nontrivial normal subgroup is \emph{simple}. For
$n\ge5$, $A_n$ is simple and non-abelian, so $S_n$ is \emph{not} solvable.
\end{definicao}
A subgroup $G\le S_n$ is \emph{transitive} if it has one orbit and \emph{primitive} if in
addition it preserves no nontrivial partition (the point stabiliser is then maximal).
For the Galois group acting on the roots of an irreducible polynomial, transitivity is
automatic, and primitivity is equivalent to the field having \emph{no proper subfield}.

\begin{teorema}[Jordan; see \cite{DixonMortimer,Wielandt}]\label{teo:jordan}
A primitive subgroup of $S_n$ containing a $p$-cycle for a prime $p\le n-3$ contains
$A_n$.
\end{teorema}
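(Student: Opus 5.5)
The plan is the classical argument via Jordan's original theorem on primitive groups and the lemma of Marggraf/Jordan on sets of points fixed by a subgroup. Let $G\le S_n$ be primitive and contain a $p$-cycle $c$ with $p$ prime, $p\le n-3$. Write $\Delta=\supp(c)$, so $|\Delta|=p$, and $\Gamma$ for its complement, with $|\Gamma|=n-p\ge 3$. Let $H=\langle g c g^{-1} : g\in G\rangle$ be the normal subgroup generated by the conjugates of $c$. First I will show that $G$ is $(n-p+1)$-transitive. Then I will use that the only sufficiently highly transitive subgroups containing a $p$-cycle are $A_n$ and $S_n$.

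\textbf{Step 1 (Jordan set).} Call $\Delta\subseteq\{1,\dots,n\}$ a \emph{Jordan complement} if the pointwise stabiliser $G_{(\Gamma)}$ acts transitively on $\Delta$. Here $\langle c\rangle\le G_{(\Gamma)}$ is already transitive on $\Delta$, indeed primitive, since $p$ is prime. The key lemma is the classical one (Dixon--Mortimer Thm.~3.3A, Wielandt \S13): if $G$ is primitive and $G_{(\Gamma)}$ is primitive on $\Delta$ with $|\Gamma|\ge 1$, then $G$ is $(|\Gamma|+1)$-transitive. I will prove it by induction on $|\Gamma|$. By primitivity, some $g\in G$ has $\Delta^g$ meeting both $\Delta$ and $\Gamma$ without containing $\Delta$; this uses only that $\Delta$ cannot be a block. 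Then $\langle G_{(\Gamma)},G_{(\Gamma^g)}\rangle$ is primitive on $\Delta\cup\Delta^g$, because two primitive groups on overlapping sets generate a primitive group on the union. This enlarges $\Delta$, and iterating shows $G_{\alpha}$ is primitive on the rest of its points, so transitivity increases. Since $p$ is prime, this route avoids the harder overlapping-blocks analysis in Jordan's theorem for general primitive Jordan sets. In particular $G$ is $(n-p+1)$-transitive, hence at least $4$-transitive, because $n-p\ge3$.

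\textbf{Step 2 (generating $A_n$).} Using $k$-transitivity with $k=n-p+1\ge 4$, I will choose a conjugate $c'=gcg^{-1}$ whose support meets $\Delta$ in exactly $p-1$ points and adds one new point. This is possible because we may move $p$ points freely, though we need $p\le k$ or a variant; I will handle both regimes. The commutator or product of $c$ and $c'$ then gives a $3$-cycle. I expect this to be the main obstacle: producing a $3$-cycle cleanly for all $p$. The concrete fallback is to take $c'$ with $|\supp(c)\cap\supp(c')|=1$. Then $[c,c']$ is a $3$-cycle, by the standard computation that two cycles overlapping in exactly one point have a $3$-cycle commutator. Such a $c'$ exists because $G$ is $2$-transitive and $|\Gamma|\ge p-1$ is not needed: $2$-transitivity lets us send two points of $\Delta$ to one point of $\Delta$ and one of $\Gamma$. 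I still have to check that the remaining overlap can be controlled, which uses the higher transitivity from Step 1, or else induction on $p$ through smaller overlaps.

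\textbf{Step 3 (conclusion).} Once $G$ contains a $3$-cycle and is primitive, in fact $2$-transitive, the standard fact applies: a primitive group containing a $3$-cycle contains $A_n$. This follows because the $3$-cycles in $G$ generate a normal subgroup whose support graph is connected by primitivity, and connected families of $3$-cycles generate the alternating group on their union. Hence $A_n\le G$.
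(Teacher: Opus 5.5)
The paper gives no proof of this statement; it quotes it from Dixon--Mortimer and Wielandt. Your Step~1 is the classical first half of the standard proof and is fine in outline: $\Delta=\supp(c)$ is a Jordan set with $G_{(\Gamma)}\ge\langle c\rangle$ primitive on it, so $G$ is $(n-p+1)$-transitive. One detail is missing there. When you merge a set $\Sigma$ with a translate, pick the translate via the non-block property of the \emph{smaller} of $\Sigma$ and $\Omega\setminus\Sigma$; then the union stays a proper subset and you really reach a set of size $n-1$. Step~3 is also fine.

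The genuine gap is Step~2: it does not produce a $3$-cycle in the case that matters. Both of your devices require prescribing the images of essentially all $p$ points of $\Delta$:
\begin{enumerate}
\item a conjugate $c'$ whose support meets $\Delta$ in a single point;
\item a conjugate that agrees with $c$ \emph{in cyclic order} on $p-1$ common points, which is what makes a product such as $c'c^{-1}$ a $3$-cycle.
\end{enumerate}
An overlap of size $p-1$ by itself is available: fix $n-p-1$ points of $\Gamma$ and send one point of $\Delta$ to the last one, using $(n-p+1)$-transitivity. But in general it yields only an uncontrolled permutation of $p+1$ points.

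If $n\ge 2p-1$, then $n-p+1\ge p$, so $G$ is $p$-transitive and contains \emph{every} $p$-cycle; you are done with no commutator at all. If $n\le 2p-2$ --- exactly the case used in the paper, $n=20$, $p=17$ --- then $|\Delta\cap\Delta^g|\ge 2p-n\ge2$ for every $g\in G$. So an overlap of one point is impossible by counting, and $n-p+1<p$ prescribed images cannot fix the cyclic order. Your ``induction on $p$ through smaller overlaps'' does not address this. Whatever replaces this step must use $|\Gamma|\ge3$ essentially, since for $|\Gamma|=2$ the statement fails: $\mathrm{PSL}(2,8)$ on $9$ points is $3$-transitive, contains a $7$-cycle, and does not contain $A_9$.

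The missing idea is a Sylow--Frattini argument on the fixed set $\Gamma$.
\begin{enumerate}
\item Since $G_{(\Gamma)}$ acts faithfully on $\Delta$, its order divides $p!$, so $P=\langle c\rangle$ is a Sylow $p$-subgroup of $G_{(\Gamma)}\trianglelefteq G_{\{\Gamma\}}$. The Frattini argument gives $G_{\{\Gamma\}}=G_{(\Gamma)}\,N_{G_{\{\Gamma\}}}(P)$.
\item By Step~1, $G$ is $|\Gamma|$-transitive, so $G_{\{\Gamma\}}$ induces $\mathrm{Sym}(\Gamma)$ on $\Gamma$. As $G_{(\Gamma)}$ acts trivially there, $N_G(P)$ (which preserves $\Gamma$) already induces all of $\mathrm{Sym}(\Gamma)$.
\item Because $C_{S_n}(c)=\langle c\rangle\times\mathrm{Sym}(\Gamma)$, each element of $C_G(P)$ has the form $c^i\sigma$ with $\sigma\in G$ supported on $\Gamma$. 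So the image $K$ of $C_G(P)$ in $\mathrm{Sym}(\Gamma)$ consists of elements of $G$.
\item $K$ is normal in $\mathrm{Sym}(\Gamma)$, and $\mathrm{Sym}(\Gamma)/K$ is a quotient of $N_G(P)/C_G(P)\hookrightarrow\mathrm{Aut}(P)\cong C_{p-1}$, hence cyclic. Therefore $K\supseteq\mathrm{Alt}(\Gamma)$.
\item Since $|\Gamma|=n-p\ge3$, this puts a $3$-cycle in $G$, and your Step~3 then gives $A_n\le G$.
\end{enumerate}
This is precisely where $p\le n-3$ is used, and it makes the case distinction in Step~2 unnecessary.
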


By the Frobenius/Chebotarev density theorem, factoring an irreducible integer polynomial
modulo the primes realises, with the correct densities, the cycle types of the Galois
group acting on the roots \cite{Cohen,Cox}; this is what makes
Algorithm~\ref{alg:gal} effective, and the monodromy of the realization family (as edge
lengths vary) is precisely this Galois action on the $N$ realizations \cite{Capco,GKKPS}.

\section{The Euclidean construction: chords on the radical axis}\label{sec:constr}
Fix the gauge $v_{17}=(0,0)$, $v_{11}=(-1,0)$ (a rational unit edge). Two vertices are
\emph{free}: $v_6=v_{11}+(\cos\theta_4,\sin\theta_4)$ and
$v_2=v_6+(\cos\theta_5,\sin\theta_5)$. Every other vertex $v$ is the intersection of two
\emph{unit} circles centred at already-placed neighbours $A,B$ (Figure~\ref{fig:circ}).

\begin{figure}[htbp]
\centering
\includegraphics[width=.72\textwidth]{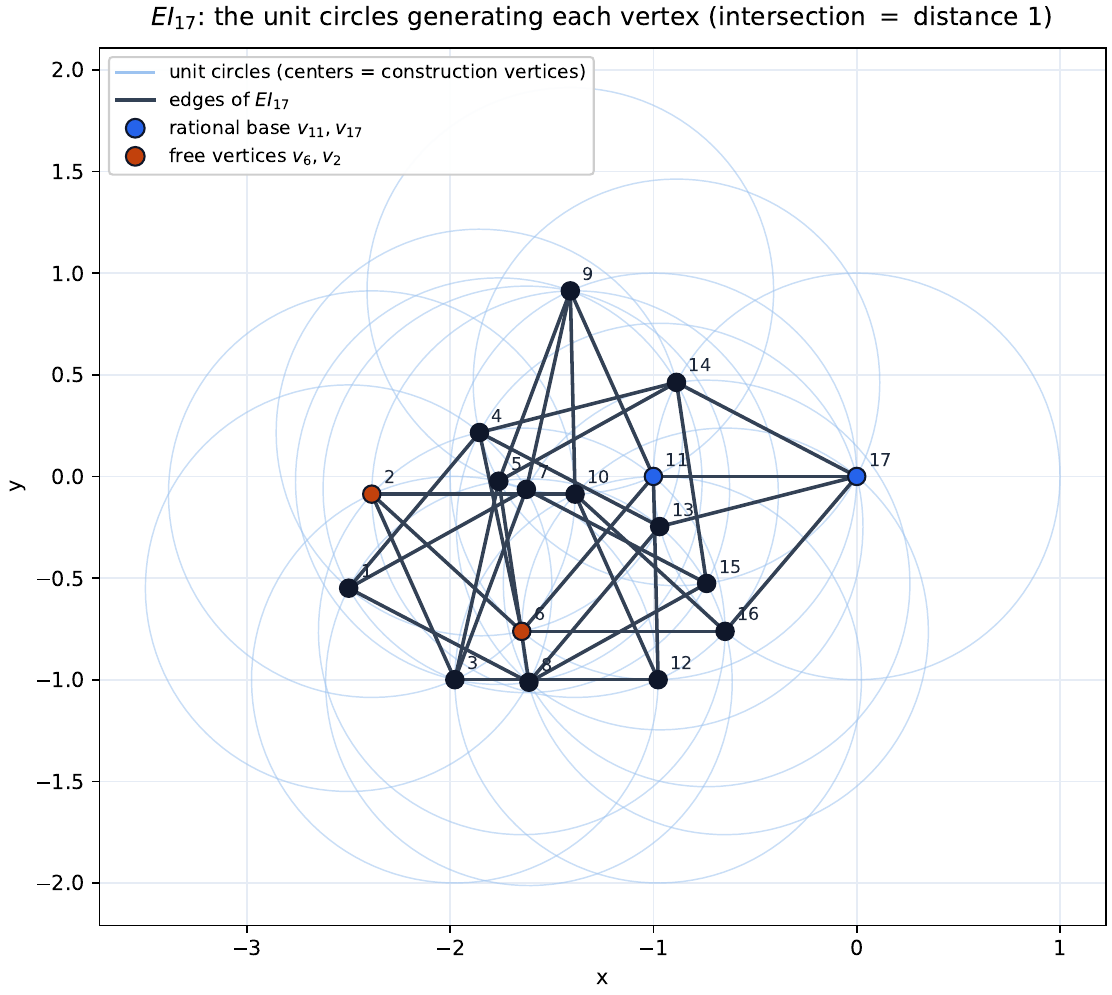}
\caption{The web of unit circles (centres at the construction vertices) whose pairwise
intersections are the vertices of $EI_{17}$; each edge joins two vertices whose circles
pass through each other.}
\label{fig:circ}
\end{figure}

\begin{proposicao}[Radical-axis certificate, Euclidean]\label{prop:radaxis}
Let $A,B$ be centres of unit circles with $0<|A-B|<2$. Their \emph{radical axis} --- the
locus of equal power --- is, by equal radii, the perpendicular bisector of $AB$; it
meets each circle in the two points
\begin{equation}\label{eq:radaxis}
v \;=\; \frac{A+B}{2}\ \pm\ \sqrt{\,1-\tfrac{|A-B|^2}{4}\,}\ \cdot\ \frac{(B-A)^{\perp}}{|A-B|}.
\end{equation}
Each such point $v$ is the \emph{unique} solution, on its branch, of one \emph{linear}
and one \emph{quadratic} equation with known coefficients in $A,B$:
\begin{equation}\label{eq:refcert}
\underbrace{2(B-A)\cdot v=|B|^2-|A|^2}_{\text{radical axis (linear)}}
\qquad\text{and}\qquad
\underbrace{|v-A|^2=1}_{\text{unit chord (quadratic)}}.
\end{equation}
Consequently every vertex other than the rational base $v_{11},v_{17}$ is
\emph{exactly certified, referentially}, by \eqref{eq:refcert} in two earlier vertices,
and the whole realization is the unique solution of a \emph{triangular chain} of such
Euclidean conditions over $\Q$ --- a certificate needing no embedding into the
degree-$20$ field, with small-height coefficients.
\end{proposicao}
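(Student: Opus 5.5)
The first two claims, formula \eqref{eq:radaxis} and the linear--quadratic characterization \eqref{eq:refcert}, are elementary Euclidean geometry, and I would prove them directly. Afterwards I would assemble the triangular chain by induction along the construction order of \S\ref{sec:constr}.

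\emph{Step 1: the radical axis.} Subtracting the two circle equations $|v-A|^2=1$ and $|v-B|^2=1$ kills $|v|^2$ and leaves $2(B-A)\cdot v=|B|^2-|A|^2$. This is the locus of equal power, and since the radii are equal it is the perpendicular bisector of $AB$. Hence any common point of the two circles satisfies both equations in \eqref{eq:refcert}. Conversely, a point satisfying the linear equation and $|v-A|^2=1$ also satisfies $|v-B|^2=1$, so the two systems are equivalent.

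\emph{Step 2: solve on the bisector.} Parametrize the bisector as $v=\frac{A+B}{2}+t\,\frac{(B-A)^\perp}{|A-B|}$. By Pythagoras, $|v-A|^2=\frac{|A-B|^2}{4}+t^2$, so the quadratic equation becomes $t^2=1-\frac{|A-B|^2}{4}$. The hypothesis $0<|A-B|<2$ makes the right side positive, giving exactly two real roots $t=\pm\sqrt{\cdot}$. This is \eqref{eq:radaxis}. Fixing the sign, i.e.\ the side of the oriented line $AB$, singles out one point, which gives the claimed uniqueness on each branch. The linear equation carries coefficients polynomial in the coordinates of $A,B$; the quadratic has coefficients in those coordinates as well.

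\emph{Step 3: the chain.} Order the vertices as in the construction: first $v_{17}$ and $v_{11}$ (rational), then $v_6$ and $v_2$ (free angles), then each remaining vertex $v$ with two earlier-placed neighbours $A,B$. By induction, each vertex is the unique branch solution of \eqref{eq:refcert} in earlier vertices, so the whole realization is the unique solution of this triangular system once the branch signs are fixed. Two points need care here.
\begin{enumerate}[leftmargin=2em,itemsep=1pt]
\item $v_6$ and $v_2$ are not radical-axis vertices. Their defining conditions are the unit chords $|v_6-v_{11}|=1$ and $|v_2-v_6|=1$, and the two remaining edges serve as closure equations fixing $\theta_4,\theta_5$. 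By isostaticity and Maehara (Theorem~\ref{teo:maehara}) these have isolated algebraic solutions. So ``every vertex other than the base'' should be read as including these two via the closure system. The conditions are still polynomial over $\Q$, but they are not of the pure radical-axis shape.
\item The hypothesis $0<|A-B|<2$ must hold at every step.
\end{enumerate}

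The main obstacle is this non-degeneracy at every step. I would verify it numerically on the Newton-refined realization of Figure~\ref{fig:real}, to precision $10^{-400}$. A strict inequality with a margin far exceeding the error is a rigorous certificate by interval arithmetic, and the same computation records which branch sign is taken at each step. Because each equation only references earlier vertices and rational operations, no embedding into the degree-$20$ field is needed, which justifies the final ``referential'' claim.
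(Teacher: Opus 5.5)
Your proposal is correct and follows the paper's own argument. Subtracting the two circle equations gives the linear radical-axis condition, which together with one unit chord is equivalent to lying on both circles; the two solutions are the twins selected by the branch sign, and one iterates in construction order. Your explicit bisector parametrization and your separate treatment of the free vertices $v_6,v_2$ (which the paper's proof passes over by asserting that every construction vertex has two placed neighbours) only make it more complete.

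One caveat, shared with the paper: fixing the branch signs need not single out a root of $g_1=g_2=0$, so uniqueness of the whole realization also needs an isolating region for $(\theta_4,\theta_5)$, which your certified Newton refinement can supply. By contrast, the non-degeneracy you call the main obstacle is essentially automatic, since $|A-B|\le|A-v|+|v-B|=2$ and $A\neq B$ for an injective realization, and uniqueness on a branch survives even $|A-B|=2$.
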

\begin{proof}
For circles $|P-A|^2=r_A^2$, $|P-B|^2=r_B^2$ the radical axis is
$\{P:|P-A|^2-|P-B|^2=r_A^2-r_B^2\}$; with $r_A=r_B=1$ this is
$2(B-A)\cdot P=|B|^2-|A|^2$, the perpendicular bisector of $AB$. A point of it with
$|P-A|=1$ lies on both circles, hence is an intersection; conversely the two
intersections have equal power, so lie on the radical axis. The two solutions of
\eqref{eq:refcert} are $v$ and its \emph{twin}; fixing the branch selects one. Each
construction vertex has two already-placed neighbours, so iterating in construction
order gives the triangular chain, and \eqref{eq:radaxis} solves \eqref{eq:refcert} in
closed form.
\end{proof}

\begin{figure}[htbp]
\centering
\includegraphics[width=.66\textwidth]{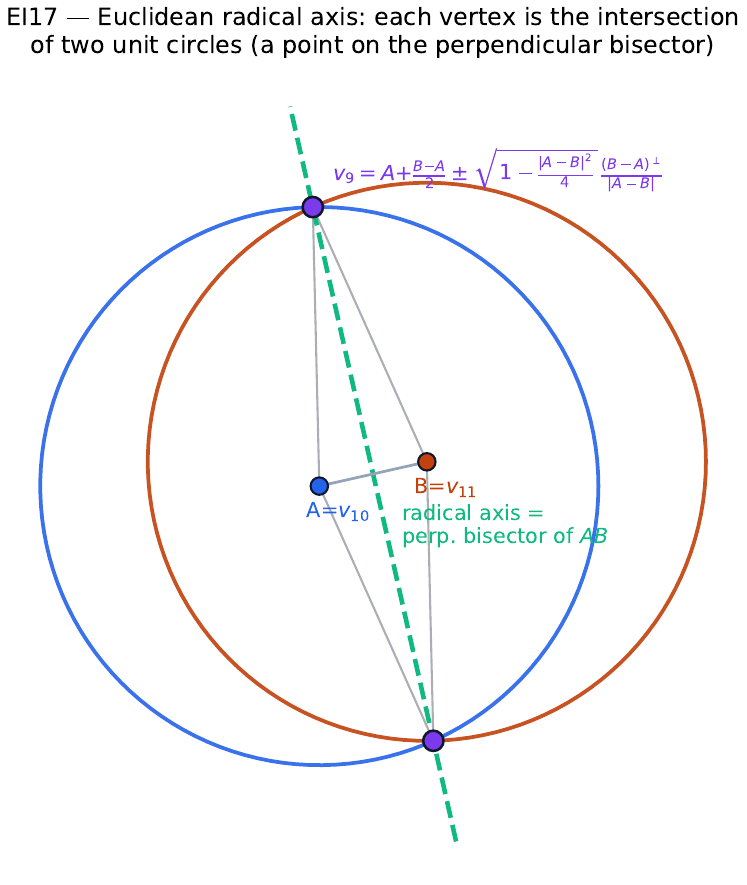}
\caption{The Euclidean content of Proposition~\ref{prop:radaxis}. Two unit circles with
centres $A,B$ meet on their radical axis --- the perpendicular bisector of $AB$ (equal
radii) --- at $v$ and its twin $v'$; the edges $Av,Bv$ are unit chords, and
$v=M\pm h\,(B-A)^{\perp}/|A-B|$ with $M=\tfrac{A+B}2$ and $h=\sqrt{1-|A-B|^2/4}$. Thus
each vertex is fixed by one linear (radical axis) and one quadratic (unit chord)
condition with known coefficients.}
\label{fig:radeuclid}
\end{figure}

Iterating \eqref{eq:radaxis} (Figure~\ref{fig:steps}), the fifteen non-base vertices
form a \emph{tower of square roots} over $\theta_4,\theta_5$: a ruler-and-compass
construction \emph{relative} to the two free angles. All non-radicality is therefore
concentrated in those two angles.

\begin{figure}[htbp]
\centering
\includegraphics[width=\textwidth]{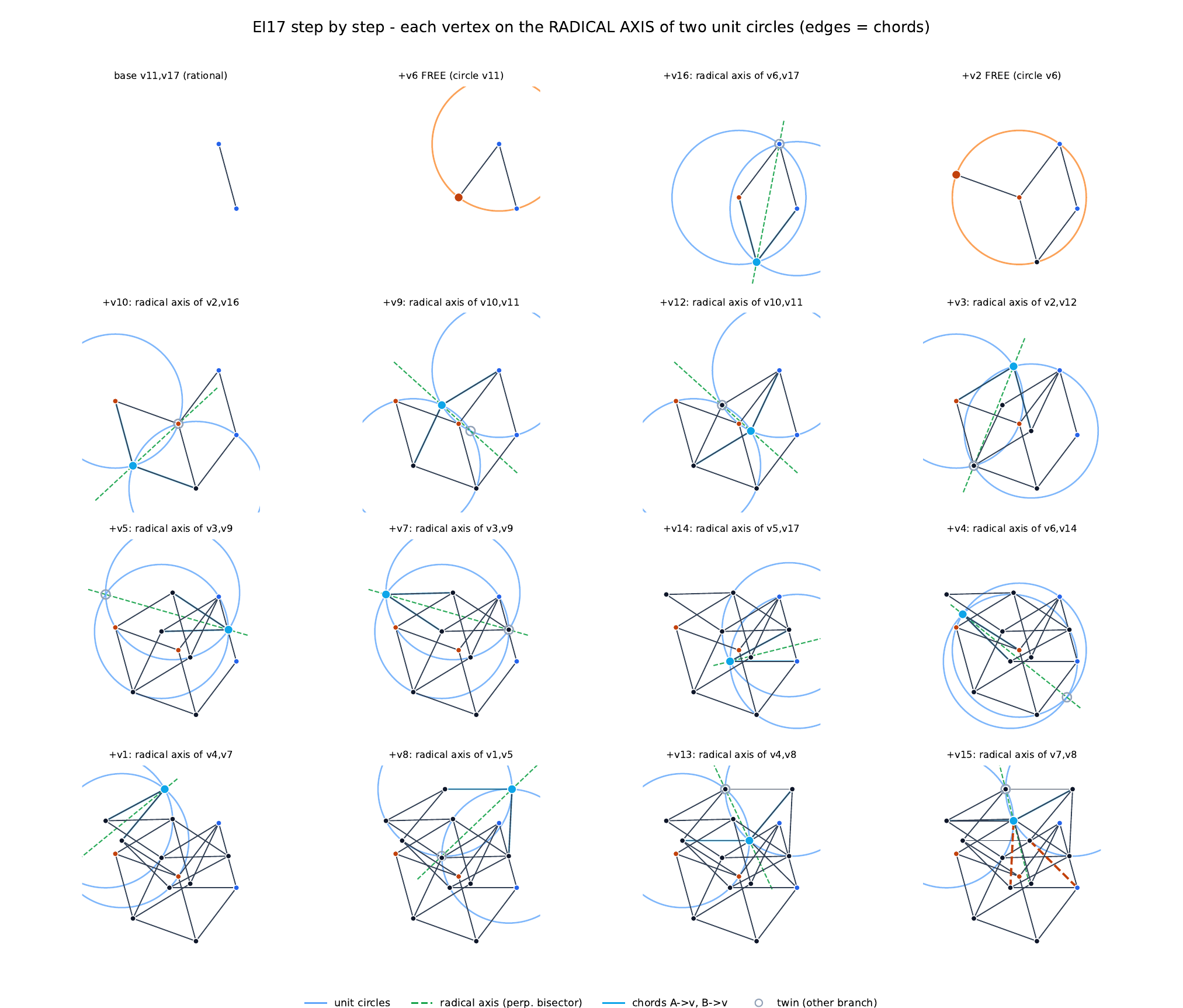}
\caption{Step-by-step construction of $EI_{17}$ showing the radical-axis geometry: each
tri-vertex (light blue) is born on the perpendicular bisector (dashed green) of two unit
circles, with its twin (grey); edges are the chords (blue). Base blue, free angles
$v_6,v_2$ orange; the last frame shows the two closures dashed orange.}
\label{fig:steps}
\end{figure}

\subsection{Unit rhombi}
The nine $4$-cycles of $EI_{17}$ are, in this realization, \emph{unit rhombi}
(side-$1$ parallelograms; Figure~\ref{fig:rhombi}); four have horizontal side $(1,0)$,
so that e.g.\ $v_2v_{10}v_{12}v_3$ closes. The graph is a braced quilt of rhombi, as in
the Moser spindle; the bracing is what makes the isostatic frame rigid.

\begin{figure}[htbp]
\centering
\includegraphics[width=.66\textwidth]{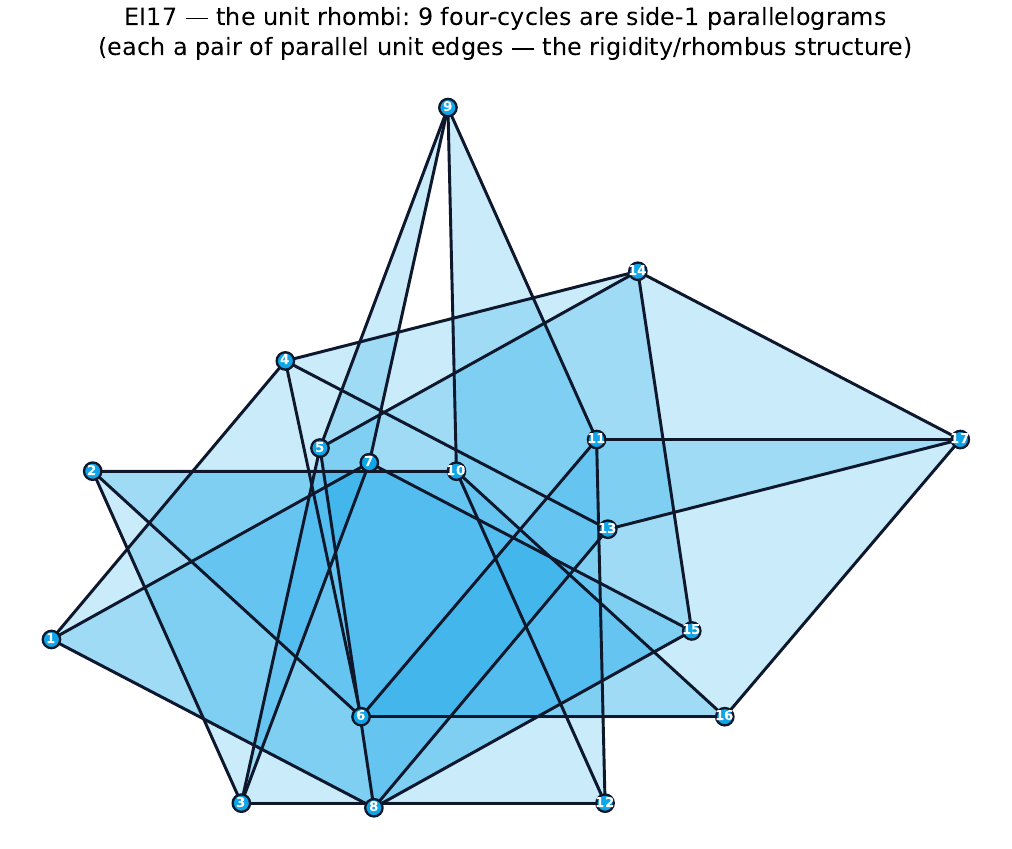}
\caption{The nine unit rhombi of $EI_{17}$ ($4$-cycles with all four sides equal to $1$).}
\label{fig:rhombi}
\end{figure}

\subsection{The closure system}
The two free angles are locked by the two remaining edges $(13,17)$ and $(14,15)$
(Figure~\ref{fig:mech}), giving two polynomial equations $g_1(\theta_4,\theta_5)=0$,
$g_2(\theta_4,\theta_5)=0$ --- two curves in the free-angle plane whose crossing is a
realization (Figure~\ref{fig:closure}). Eliminating $\theta_5$, the resultant
$p=\Res_{\theta_5}(g_1,g_2)$ is a one-variable polynomial whose degree is the number of
realizations.

\begin{figure}[htbp]
\centering
\includegraphics[width=.72\textwidth]{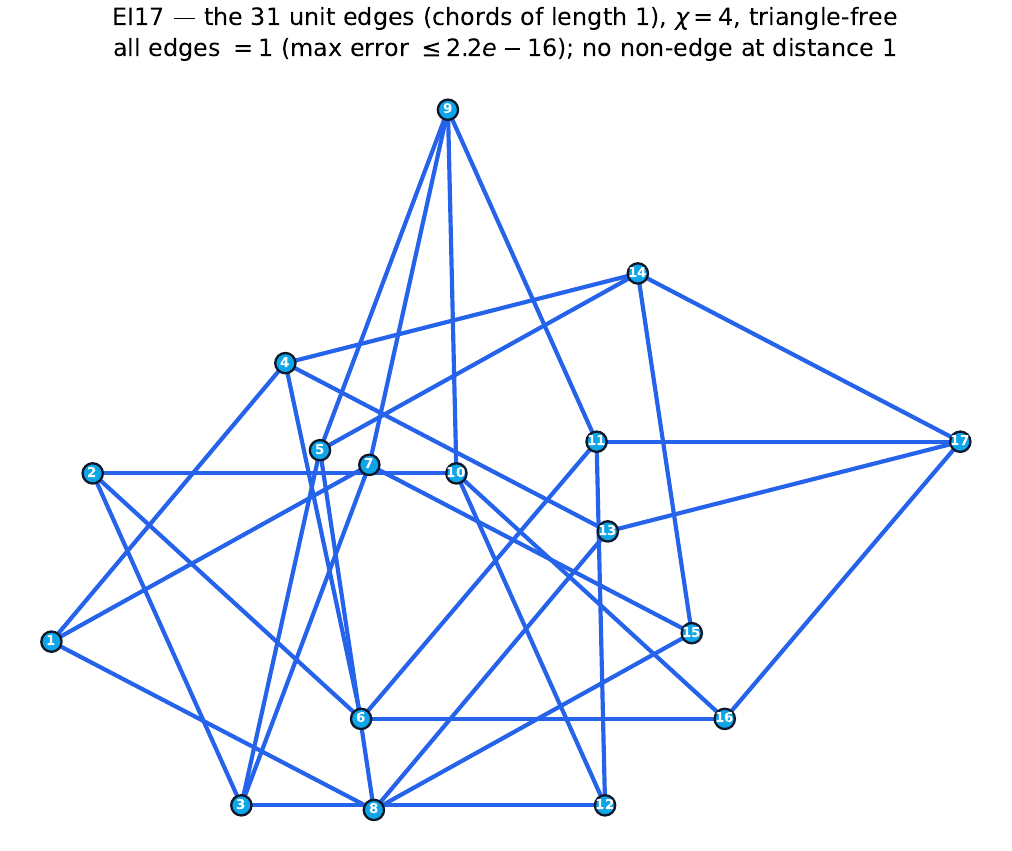}
\caption{Twenty-nine unit edges are produced for free by the circle intersections; the
two dashed \emph{closure} edges $(13,17),(14,15)$ hold only for special values of
$\theta_4,\theta_5$ and are what the degree-$20$ resultant governs.}
\label{fig:mech}
\end{figure}

\begin{figure}[htbp]
\centering
\includegraphics[width=.56\textwidth]{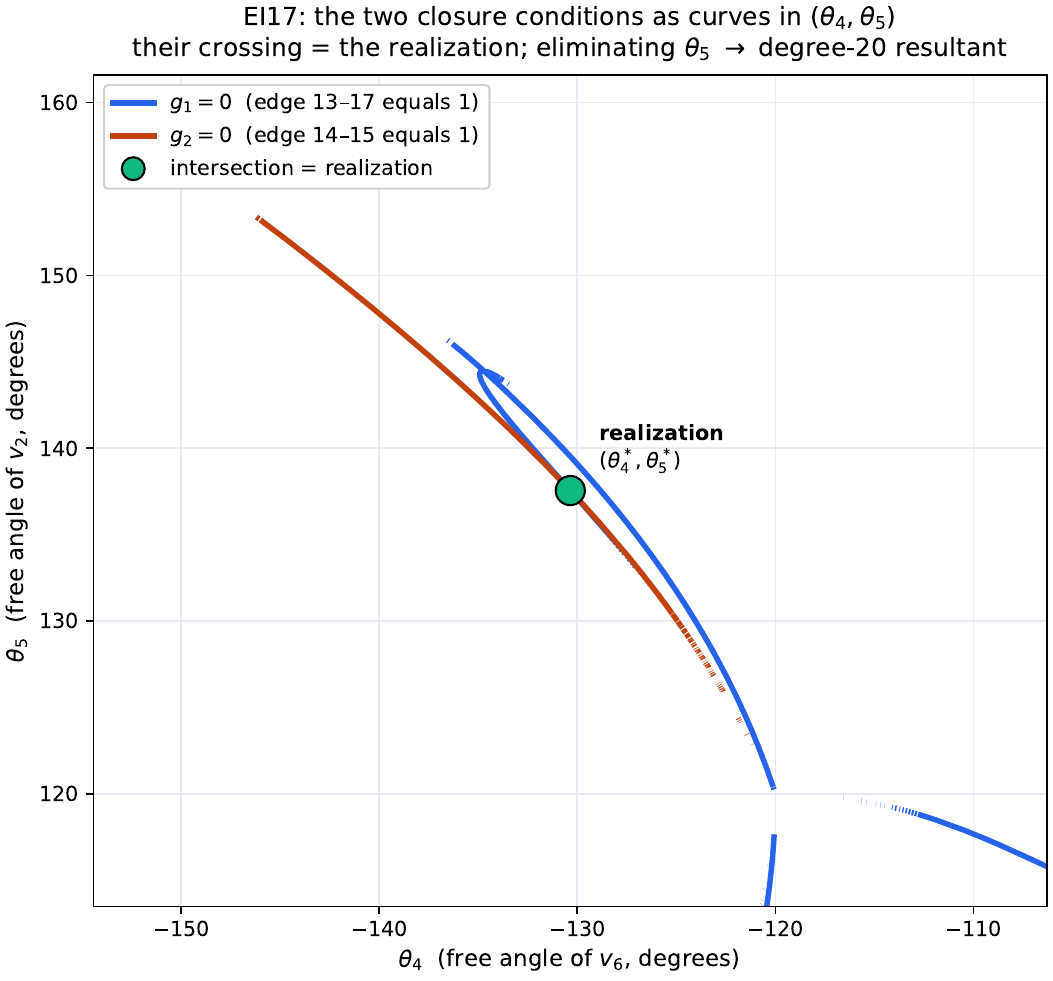}
\caption{The two closure conditions as curves in $(\theta_4,\theta_5)$: $g_1=0$ (blue,
edge $13$--$17$) and $g_2=0$ (orange, edge $14$--$15$); their crossing (green) is the
realization. Eliminating $\theta_5$ gives the degree-$20$ resultant.}
\label{fig:closure}
\end{figure}

\section{Certification methods}\label{sec:methods}
We describe the pipeline as explicit algorithms.

\begin{algoritmo}[High-precision realization]\label{alg:newton}
Gauge-fix $v_{17}=(0,0)$, $v_{11}=(-1,0)$; from the six-digit seed run Newton on the
$30$ edge residuals $|p(u)-p(v)|^2-1$ (excluding the base) until $\|F\|<10^{-\text{dps}}$,
in multiprecision (\texttt{mpmath}); certify by Gram--SVD: rank $2$, $e_{\max}<10^{-400}$,
minimal pairwise separation $>10^{-2}$, and zero non-edges at distance $1$.
\end{algoritmo}

\begin{algoritmo}[Integer-relation recognition, validated]\label{alg:pslq}
For a chosen coordinate $\alpha$, form the vector $(1,\alpha,\dots,\alpha^d)$ at $D$ digits
and run the \textsc{PSLQ} integer-relation algorithm (as \texttt{algdep})
\cite{FergusonBailey}; accept a candidate minimal polynomial only if it \emph{re-verifies}
at a higher precision $D'>D$ (residue validation). This guards against spurious
low-degree relations (Remark~\ref{obs:pitfall}).
\end{algoritmo}

\begin{algoritmo}[Continuity reduction]\label{alg:cont}
Order the vertices so that each is a circle intersection of two placed neighbours (\emph{tri})
or, when only one neighbour is placed, a \emph{free} angle; the non-tree edges become
closure conditions. This reduces the whole system to the two free angles and the two
closures $g_1,g_2$ of \S\ref{sec:constr}.
\end{algoritmo}

\begin{algoritmo}[Galois classification by Frobenius]\label{alg:gal}
Factor the (validated) minimal polynomial modulo many primes $p\nmid\operatorname{disc}$;
the multiset of factor degrees is the cycle type of the Frobenius at $p$. The collection
of cycle types constrains the Galois group; a $p$-cycle for a suitable prime forces
$A_n$ or $S_n$ by Jordan's theorem \cite{DixonMortimer,Wielandt}, and parity
(even/odd permutations) distinguishes $A_n$ from $S_n$ \cite{Cohen}.
\end{algoritmo}

\begin{definicao}[Frobenius census]\label{def:census}
Let $f\in\Z[x]$ be irreducible of degree $n$. For each prime $p$ of \emph{good
reduction} --- $p\nmid\operatorname{disc}(f)$ and $p$ does not divide the leading
coefficient, so $f\bmod p$ is separable --- factor $f$ over $\mathbb{F}_p$; by
\emph{Dedekind's theorem} \cite{Cohen} the multiset of the degrees of its irreducible
factors equals the cycle type of the Frobenius conjugacy class
$\operatorname{Frob}_p\in\Gal(f)$ acting on the $n$ roots. The \emph{Frobenius census}
is the collection of these cycle types gathered over many primes $p$. By the
\emph{Chebotarev density theorem} \cite{Cohen}, as $p$ ranges over the primes the
relative frequency of each cycle type converges to the proportion of elements of
$\Gal(f)$ having that cycle type; hence the census reconstructs, statistically, the
distribution of conjugacy classes of $\Gal(f)\le S_n$. It yields three levers:
(i) element orders (the $\operatorname{lcm}$ of cycle lengths) reveal the primes
dividing $|\Gal(f)|$ (Cauchy); (ii) a cycle of prime length $q$ with $n/2<q<n-2$
forces $A_n$ or $S_n$ (Jordan \cite{DixonMortimer,Wielandt}); (iii) the presence of an
odd permutation distinguishes $S_n$ from $A_n$. For $EI_{17}$ ($n=20$) the census
exhibits a $17$-cycle (whence $\Gal\supseteq A_{20}$) and a $20$-cycle (odd, whence
$\Gal=S_{20}$).
\end{definicao}

\subsection{Classical results invoked}\label{ssec:classical}
For completeness we state, in academic notation, the classical theorems used above; all
are cited, not proved here.

\begin{teorema}[Dedekind's factorization theorem \cite{Cohen}]\label{teo:dedekind}
Let $f\in\Z[x]$ be monic and irreducible and let $p$ be a prime with
$p\nmid\operatorname{disc}(f)$. If $f\equiv g_1\cdots g_k\pmod p$ with the $g_i\in\mathbb{F}_p[x]$
distinct, monic, irreducible of degrees $d_1,\dots,d_k$, then the Frobenius conjugacy
class $\operatorname{Frob}_p\subseteq\Gal(f)$ acts on the $n=\deg f$ roots as a
permutation of cycle type $(d_1,\dots,d_k)$.
\end{teorema}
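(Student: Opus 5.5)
The plan is the standard route from Frobenius elements to Dedekind's theorem, via the splitting field. Let $\alpha_1,\dots,\alpha_n$ be the roots of $f$ in a splitting field $L$, let $\mathcal{O}_L$ be its ring of integers, and fix a prime $\mathfrak{P}\mid p$ of $\mathcal{O}_L$. Because $p\nmid\operatorname{disc}(f)$, the roots remain distinct modulo $\mathfrak{P}$: the product $\prod_{i<j}(\alpha_i-\alpha_j)^2$ is the discriminant, a $p$-adic unit, so no factor $\alpha_i-\alpha_j$ lies in $\mathfrak{P}$. This has two consequences. First, $p$ is unramified in $L$, because $L$ is generated by roots of a polynomial whose discriminant is prime to $p$, so $L_{\mathfrak P}$ is a compositum of unramified extensions. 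Second, the decomposition group $D_{\mathfrak P}$ maps isomorphically onto $\Gal(k_{\mathfrak P}/\mathbb{F}_p)$, which is cyclic and generated by $x\mapsto x^p$. The Frobenius $\sigma=\operatorname{Frob}_{\mathfrak P}$ is the unique lift, characterised by $\sigma(x)\equiv x^p \pmod{\mathfrak P}$ for all $x\in\mathcal O_L$. The other primes above $p$ give conjugate elements, so only the conjugacy class is well defined.

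Next, reduce modulo $\mathfrak P$. Since $f$ is monic, the $\alpha_i$ are integral, and the reduction map $\alpha_i\mapsto\bar\alpha_i\in k_{\mathfrak P}$ is a bijection from the roots of $f$ onto the roots of $\bar f$; it is injective by the separability just shown. This bijection intertwines the action of $\sigma$ on the $\alpha_i$ with the action of the $p$-power map on the $\bar\alpha_i$. The cycle type of $\sigma$ on the roots is therefore the cycle type of Frobenius $x\mapsto x^p$ on the roots of $\bar f=g_1\cdots g_k$ in $\overline{\mathbb F}_p$.

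Finally, compute that cycle type over the finite field. The roots of an irreducible $g_i$ of degree $d_i$ lie in $\mathbb F_{p^{d_i}}$ and are $\beta,\beta^p,\dots,\beta^{p^{d_i-1}}$, all distinct, and $\beta^{p^{d_i}}=\beta$; so $x\mapsto x^p$ acts on them as one $d_i$-cycle. The $g_i$ are distinct and $\bar f$ is separable, so their root sets are disjoint. The cycle type is thus exactly $(d_1,\dots,d_k)$.

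The main obstacle is the first step: establishing that $p$ is unramified and that Frobenius lifts to a well-defined class. Here I would cite the standard facts on decomposition and inertia groups from \cite{Cohen}: inertia acts trivially on the residue field, and the reduction $D_{\mathfrak P}\to\Gal(k_{\mathfrak P}/\mathbb F_p)$ is surjective with kernel the inertia group. Inertia is trivial because an element fixing every $\bar\alpha_i$ must fix every $\alpha_i$ by injectivity of reduction, and the $\alpha_i$ generate $L$. With this in hand, the remaining steps are routine. Irreducibility of $f$ is not needed for the statement; it only makes $\Gal(f)$ transitive, which the later application with Jordan's theorem uses.
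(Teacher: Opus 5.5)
The paper gives no proof of this statement. It appears among the classical results of \S\ref{ssec:classical}, which are explicitly ``cited, not proved here'' with a pointer to \cite{Cohen}, so there is no internal argument to compare yours against. Your proposal is the standard decomposition-group proof, and it is correct and complete modulo the cited surjectivity of $D_{\mathfrak P}\to\Gal(k_{\mathfrak P}/\mathbb{F}_p)$.

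One observation does all the work: $p\nmid\operatorname{disc}(f)$ makes reduction modulo $\mathfrak P$ injective on the roots. That forces the inertia group to be trivial, so the Frobenius lift is unique and $\sigma(x)\equiv x^p \pmod{\mathfrak P}$ pins it down even in all of $\Gal(L/\Q)$. It also transports the action of $\sigma$ on the $\alpha_i$ to the $p$-power map on the roots of $\bar f$, where the finite-field cycle count is immediate. Your first, Hensel-flavoured justification of unramifiedness becomes redundant once the inertia argument is given. You are also right that irreducibility of $f$ plays no role.

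One point is worth adding, because of how the paper uses the result. The polynomial of Appendix~\ref{ap:poly} is not monic ($c_{20}\sim10^{97}$), and your argument uses monicity to get $\alpha_i\in\mathcal O_L$ and $\operatorname{disc}(f)=\prod_{i<j}(\alpha_i-\alpha_j)^2$. For leading coefficient $c$ with $p\nmid c$, apply your argument to the monic $F(x)=c^{\,n-1}f(x/c)\in\Z[x]$. Its roots $c\alpha_i$ are permuted by Galois exactly as the $\alpha_i$ are, and $\operatorname{disc}(F)=c^{(n-1)(n-2)}\operatorname{disc}(f)$. Moreover $\bar F(x)=\bar c^{\,n-1}\bar f(x/\bar c)$ has the same factor degrees as $\bar f$. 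This is exactly why Definition~\ref{def:census} also requires $p\nmid c$.
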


\begin{teorema}[Chebotarev density theorem \cite{Cohen}]\label{teo:chebotarev}
Let $L/\Q$ be Galois with group $G$ and let $C\subseteq G$ be a conjugacy class. The set
of primes $p$ unramified in $L$ with $\operatorname{Frob}_p=C$ has natural density
$|C|/|G|$. Consequently the Frobenius census (Definition~\ref{def:census}) reproduces,
in the limit, the class distribution of $\Gal(f)$.
\end{teorema}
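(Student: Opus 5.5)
The plan follows the classical route: reduce the density statement for an arbitrary conjugacy class to the case of an abelian (indeed cyclic) extension, and handle that case with the analytic theory of $L$-functions. The consequence about the Frobenius census then follows by combining the result with Theorem~\ref{teo:dedekind}.

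\emph{Setting up.} The analytic density of a set of primes $P$ is $\lim_{s\to1^+}\sum_{p\in P}p^{-s}/\log\frac{1}{s-1}$. For $L/\Q$ Galois with group $G$, the Frobenius class $C$ of an unramified $p$ is well defined. The goal is to show that the primes with $\operatorname{Frob}_p=C$ have density $|C|/|G|$. The plan is to prove this first for analytic (Dirichlet) density and then upgrade to natural density, using a Tauberian theorem (Wiener--Ikehara) applied to the relevant $L$-functions. The upgrade needs nonvanishing on the whole line $\Re s=1$, not only at $s=1$.

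\emph{Cyclic case.} Suppose first $G$ is abelian. Class field theory identifies $G$ with a quotient of a ray class group, and $\operatorname{Frob}_p$ corresponds to the class of $p$. For each character $\chi$ of $G$, the Artin $L$-function $L(s,\chi)$ is then a Hecke $L$-function. Orthogonality of characters gives
\[
\sum_{\operatorname{Frob}_p=\sigma}p^{-s}=\frac{1}{|G|}\sum_{\chi}\overline{\chi(\sigma)}\log L(s,\chi)+O(1).
\]
The trivial character contributes $\log\zeta(s)\sim\log\frac1{s-1}$. The theorem therefore reduces to $L(1+it,\chi)\neq0$ for $\chi\neq1$, exactly as in Dirichlet's theorem on primes in progressions. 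This nonvanishing comes from the factorization $\zeta_{L}(s)=\prod_\chi L(s,\chi)$ and the fact that $\zeta_L$ has a simple pole at $s=1$ with no zero on $\Re s=1$ (the $3$--$4$--$1$ trick).

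\emph{Deuring reduction.} For general $G$, take $\sigma\in C$ and let $H=\langle\sigma\rangle$ with fixed field $F=L^H$, so that $L/F$ is cyclic. Apply the abelian case over $F$: the primes $\mathfrak P$ of $F$ of degree one over $\Q$ with $\operatorname{Frob}_{\mathfrak P}(L/F)=\sigma$ have density $1/|H|$, since primes of higher degree contribute $O(1)$ to the Dirichlet series. Counting then transfers this to $\Q$. Each rational $p$ with $\operatorname{Frob}_p=C$ has exactly $|C_G(\sigma)|/|H|$ such degree-one primes $\mathfrak P\mid p$ in $F$ with Frobenius $\sigma$. Hence the density over $\Q$ is $\frac{1}{|H|}\cdot\frac{|H|}{|C_G(\sigma)|}=\frac{|C|}{|G|}$.

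\emph{Main obstacle and the consequence.} The main obstacle is the $\Re s=1$ nonvanishing and analytic continuation of Hecke $L$-functions, which rest on class field theory; these are taken as known. The double-coset count in the Deuring reduction is the only delicate combinatorial step. For the census claim, let $L$ be the splitting field of $f$. By Theorem~\ref{teo:dedekind}, the factor-degree pattern of $f\bmod p$ is the cycle type of $\operatorname{Frob}_p$. Summing $|C|/|G|$ over all classes $C$ of a given cycle type yields the proportion of elements of $\Gal(f)$ with that cycle type, which is exactly what the census recovers in the limit.
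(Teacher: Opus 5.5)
The paper does not prove this statement. It sits in the block of ``classical results invoked'' that are explicitly ``cited, not proved here'', with a pointer to Cohen.

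Your proposal instead reconstructs the standard proof, and it is correct in outline. You handle the abelian case through Artin reciprocity and Hecke $L$-functions, reading off nonvanishing on $\operatorname{Re}s=1$ from $\zeta_L=\prod_\chi L(s,\chi)$ and the $3$--$4$--$1$ inequality. You then apply Deuring's reduction to the cyclic extension $L/L^{\langle\sigma\rangle}$. Your count is right: fixing $\mathfrak{Q}\mid p$ with Frobenius $\sigma$, the primes of $L$ above $p$ with Frobenius exactly $\sigma$ are the $\tau\mathfrak{Q}$ with $\tau\in C_G(\sigma)/H$, each inert over a degree-one prime of $F$. You should add that primes with $\operatorname{Frob}_p\neq C$ contribute none.

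Three things need tightening.
\begin{enumerate}
\item The abelian step must be stated over an arbitrary number field $F$, with $\log\zeta_F$ for the trivial character, since that is where you apply it.
\item The natural-density upgrade must be carried through the reduction. This works because the identity between the two prime counts is exact prime by prime, and the primes of $F$ of degree $\ge2$ and norm $\le x$ number $O(\sqrt{x})$.
\item The paper's Dedekind theorem is stated for monic $f$, whereas the census polynomial has leading coefficient $c_{20}\sim10^{97}$. You should pass to the monic $c_{20}^{19}f(x/c_{20})$, which has the same splitting field and the same factor degrees modulo $p\nmid c_{20}\operatorname{disc}(f)$.
\end{enumerate}

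Your reading of the consequence is the accurate one: the census recovers the distribution of cycle types, obtained by summing $|C|/|G|$ over the classes of a given type. The paper's phrase ``class distribution'' overstates what factorization patterns can see, since distinct classes of $\Gal(f)$ may share a cycle type.

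As for what each route buys, the citation is all the paper needs. Lemma~\ref{lem:galois} uses only Dedekind's theorem at the particular primes where the types $(3,17)$ and $(20)$ were observed; Chebotarev merely explains the observed frequencies. Your sketch exposes the mechanism, but it still takes class field theory, Hecke's continuation and a Tauberian theorem as black boxes. So it does not reduce the dependence on deep inputs. Incidentally, Hecke's continuation is independent of class field theory, contrary to your remark; it is the identification of abelian Artin $L$-functions with Hecke $L$-functions that needs it.
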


\begin{teorema}[Jordan's theorem, restated \cite{DixonMortimer,Wielandt}]\label{teo:jordan-restated}
Let $H\le S_n$ be a primitive permutation group of degree $n$ containing a $q$-cycle for
some prime $q\le n-3$. Then $H\supseteq A_n$; hence $H\in\{A_n,S_n\}$, and $H=S_n$ iff
$H$ contains an odd permutation.
\end{teorema}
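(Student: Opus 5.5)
This is the classical theorem of Jordan, and I would prove it by the Jordan-set method. Write $H\le S_n$ for the primitive group acting on $\Omega=\{1,\dots,n\}$. Let $c\in H$ be the $q$-cycle, $\Gamma=\supp(c)$ with $|\Gamma|=q$, and $\Delta=\Omega\setminus\Gamma$ with $|\Delta|=n-q\ge3$. The pointwise stabiliser $H_{(\Delta)}$ contains $c$, so it is transitive on $\Gamma$. Since $|\Gamma|=q$ is prime, any transitive group on $\Gamma$ is automatically primitive there. So $\Gamma$ is a \emph{Jordan set}: its pointwise complement-stabiliser acts primitively on it. The plan is to grow this Jordan set until it yields high transitivity, and then manufacture a $3$-cycle.

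\textbf{Step 1 (growing Jordan sets).} Let $\Gamma'$ be a Jordan set with $1<|\Gamma'|<n$. Primitivity of $H$ means $\Gamma'$ is not a block, so there is $g\in H$ with $\Gamma'\cap\Gamma'^g\neq\emptyset$ and $\Gamma'^g\neq\Gamma'$. The group $\langle H_{(\Omega\setminus\Gamma')},H_{(\Omega\setminus\Gamma'^g)}\rangle$ fixes $\Omega\setminus(\Gamma'\cup\Gamma'^g)$ pointwise. It is transitive on $\Gamma'\cup\Gamma'^g$ because the two transitive pieces overlap.

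I would then verify the standard lemma that it is in fact primitive there. A block meeting $\Gamma'$ in more than one point must contain $\Gamma'$, by primitivity on $\Gamma'$, and then it is everything. If every block meets $\Gamma'$ in at most one point, the blocks have size at most $n/|\Gamma'|$, and one rules this out using the overlap. So $\Gamma'\cup\Gamma'^g$ is a strictly larger Jordan set.

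Iterating this gives the conclusion in the form I will use: for every point set $\Sigma$ of size at most $n-q$, $H_{(\Sigma)}$ is transitive on $\Omega\setminus\Sigma$. Hence $H$ is $(n-q+1)$-transitive, and in particular $4$-transitive.

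\textbf{Step 2 (a $3$-cycle).} I would use the elementary fact that if two cycles $a,b$ have supports meeting in exactly one point, then the commutator $[a,b]$ is a $3$-cycle. It therefore suffices to find $g\in H$ with $|\Gamma\cap\Gamma^g|=1$.

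If $q\le n-q+1$, transitivity of degree $n-q+1$ lets me send $q$ chosen points (one in $\Gamma$, the rest in $\Delta$) anywhere I like, so the required $g$ exists.

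If $q$ is large, so $q>n/2$, I expect this to be the \textbf{main obstacle}: $\Gamma$ cannot be moved to meet itself in a single point. There I would first try a different route. Take $g\in H$ so that $\Gamma\cap\Gamma^g$ has the smallest possible size $m\ge 2q-n$. A suitable product of powers of $c$ and $c^g$ then gives a cycle of prime length on fewer points, for instance via the commutator trick. One then descends by induction on $q$.

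The alternative, which is what Wielandt does, is to apply Step 1 until the Jordan complement has size at most $2$, use the results on Jordan sets of small complement, and conclude that $H$ is $(n-2)$-transitive. That already gives $H\supseteq A_n$.

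\textbf{Step 3 ($3$-cycles generate).} A primitive group containing a $3$-cycle contains $A_n$. To see this, take a maximal set $\Lambda$ on which the $3$-cycles of $H$ supported in $\Lambda$ generate $\mathrm{Alt}(\Lambda)$. If $\Lambda\ne\Omega$, primitivity gives a conjugate $3$-cycle meeting $\Lambda$ and also leaving it. Adjoining it enlarges $\Lambda$, because $\mathrm{Alt}(\Lambda)$ together with a $3$-cycle meeting $\Lambda$ in one or two points generates the alternating group on the union. This contradicts maximality, so $\Lambda=\Omega$ and $H\supseteq A_n$.

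Finally, $[S_n:A_n]=2$, so $H\in\{A_n,S_n\}$, and $H=S_n$ exactly when $H\not\le A_n$, that is, when $H$ contains an odd permutation. This is the parity lever (iii) of Definition~\ref{def:census}.
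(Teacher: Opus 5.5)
You should know first that the paper gives no proof of this statement to compare against. It quotes the result from Dixon--Mortimer and Wielandt (``all are cited, not proved here''), so your argument must stand on its own. It has a genuine gap in Step~2 in the case $2q\ge n+2$.

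This case is not marginal: it is exactly the one the paper needs ($q=17$, $n=20$ in Lemma~\ref{lem:galois}; Definition~\ref{def:census} works in the range $n/2<q<n-2$). Neither remedy you offer closes it.
\begin{itemize}
\item \emph{The commutator descent.} Take two $q$-cycles whose supports share $m\ge2$ points. Their commutator $[c,c^g]$ is in general a product of several cycles of uncontrolled lengths (it moves up to $3m$ points). Nothing forces some power of it to be a cycle of prime length shorter than $q$, so your induction on $q$ has nothing to start from.
\item \emph{The ``Wielandt alternative''.} This runs backwards. A primitive Jordan set whose complement has size $k$ yields $(k+1)$-transitivity. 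Enlarging the Jordan set until its complement has size $\le2$ therefore yields at most $3$-transitivity, not $(n-2)$-transitivity. The latter would require a primitive Jordan set of size $3$, which is essentially the $3$-cycle you are trying to produce.
\end{itemize}
Relatedly, iterating the union lemma of Step~1 does not by itself give $(n-q+1)$-transitivity. That is Jordan's theorem on Jordan sets, proved by induction on $|\Delta|$ through the primitivity of $H_\alpha$ on $\Omega\setminus\{\alpha\}$. It is fine to cite, but it is not what ``iterating'' yields.

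The standard way to close the gap works for every $q$ at once, so you can drop the case split.
\begin{enumerate}
\item Put $k=n-q\ge3$ and $P=\langle c\rangle$. Since $H_{(\Delta)}$ acts faithfully on the $q$ points of $\Gamma$, $P$ is a Sylow $q$-subgroup of $H_{(\Delta)}\trianglelefteq H_{\{\Delta\}}$.
\item Because $\Delta$ is the fixed-point set of $P$, we have $N_H(P)\le H_{\{\Delta\}}$. The Frattini argument then gives $H_{\{\Delta\}}=H_{(\Delta)}\,N_H(P)$.
\item By $k$-transitivity, $H_{\{\Delta\}}$ induces $\mathrm{Sym}(\Delta)$ on $\Delta$. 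Since $H_{(\Delta)}$ acts trivially there, $N_H(P)$ also induces $\mathrm{Sym}(\Delta)$.
\item Now $N_H(P)$ embeds in $N_{\mathrm{Sym}(\Gamma)}(P)\times\mathrm{Sym}(\Delta)\cong\mathrm{AGL}(1,q)\times\mathrm{Sym}(\Delta)$. Hence its derived subgroup acts on $\Gamma$ inside the translation group $C_q$. It induces $\mathrm{Alt}(\Delta)$ on $\Delta$, which contains a $3$-cycle $t$ because $k\ge3$.
\item Take $y\in N_H(P)'$ inducing $t$. Then $y^q$ is trivial on $\Gamma$ and acts as $t^q$ on $\Delta$. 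This is still a $3$-cycle whenever $q\ne3$, and for $q=3$ there is nothing to prove.
\end{enumerate}
Your Step~3 then gives $H\supseteq A_n$, and the parity conclusion follows as you state. This is also where the hypothesis $q\le n-3$ genuinely enters, through $k\ge3$.
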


\begin{teorema}[Burnside's $p^{a}q^{b}$ theorem \cite{DixonMortimer}]\label{teo:burnside}
Every finite group whose order has the form $p^{a}q^{b}$, with $p,q$ prime, is solvable.
In particular a transitive $G\le S_n$ all of whose element orders are $\{2,3\}$-numbers,
so that $|G|=2^{a}3^{b}$, is solvable.
\end{teorema}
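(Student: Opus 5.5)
The plan has two parts: the classical $p^aq^b$ statement, which we take by the standard character-theoretic route, and the ``in particular'' clause, which reduces to it by Cauchy's theorem.

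\emph{Reduction to a simple group.} Suppose the statement fails, and let $G$ be a counterexample of minimal order with $|G|=p^aq^b$. Every proper subgroup and every proper quotient of $G$ also has order of the form $p^{a'}q^{b'}$, so each is solvable by minimality. Extensions of solvable groups by solvable groups are solvable, so $G$ has no proper nontrivial normal subgroup. Hence $G$ is simple and non-abelian. If $a=0$ or $b=0$, then $G$ is a $p$-group, which is nilpotent and hence solvable; so we may assume $a,b\ge1$.

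\emph{A class of prime-power size.} Take a Sylow $q$-subgroup $Q$. Its centre $Z(Q)$ is nontrivial, since $Q$ is a nontrivial $q$-group. Pick $1\neq g\in Z(Q)$. Then $Q\le C_G(g)$, so the conjugacy class of $g$ has size $[G:C_G(g)]=p^{c}$ for some $c$. If $c=0$, then $g\in Z(G)$, which contradicts simplicity of a non-abelian group. So the class size is $p^c$ with $c\ge1$.

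\emph{Burnside's lemma (the main step).} We show that a class of size $p^c>1$ forces a proper nontrivial normal subgroup.
\begin{enumerate}[leftmargin=2em,itemsep=1pt]
\item Column orthogonality gives $\sum_\chi \chi(1)\chi(g)=0$. The trivial character contributes $1$. If every nontrivial $\chi$ with $\chi(g)\neq0$ had $p\mid\chi(1)$, then $1/p$ would be an algebraic integer, which is absurd. So there is a nontrivial irreducible $\chi$ with $\chi(g)\neq0$ and $p\nmid\chi(1)$.
\item The central character value $|g^G|\chi(g)/\chi(1)$ is an algebraic integer. Since $\gcd(p^c,\chi(1))=1$, a Bézout combination shows that $\alpha=\chi(g)/\chi(1)$ is itself an algebraic integer.
\item Now $\alpha$ is an average of roots of unity, so it and all its Galois conjugates have absolute value at most $1$. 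The norm of $\alpha$ is therefore a nonzero rational integer of absolute value at most $1$. Hence $|\chi(g)|=\chi(1)$.
\item Equality in this estimate means the representation $\rho$ affording $\chi$ sends $g$ to a scalar matrix. Thus $\rho(g)$ lies in the centre of $\rho(G)$.
\item Since $G$ is simple and $\chi$ is nontrivial, $\rho$ is faithful. Then $g\in Z(G)$, which is the contradiction we want.
\end{enumerate}
This integrality argument is the real obstacle; the other steps are formal.

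\emph{The ``in particular'' clause.} Let $r$ be any prime dividing $|G|$. By Cauchy's theorem $G$ contains an element of order $r$. By hypothesis every element order is a $\{2,3\}$-number, so $r\in\{2,3\}$. Hence $|G|=2^a3^b$, and the first part gives solvability. Transitivity of $G\le S_n$ plays no role here; it is only the setting in which Algorithm~\ref{alg:gal} reads element orders off Frobenius cycle types.
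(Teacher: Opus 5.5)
Your proof is correct, and it matches the standard route. The paper does not prove this theorem; it cites it from Dixon--Mortimer along with the other classical results of that subsection. What you give is the usual character-theoretic argument: a minimal counterexample is simple, a nontrivial central element of a Sylow $q$-subgroup has a class of size $p^c>1$, and Burnside's integrality lemma then forces that element into $Z(G)$. Your Cauchy reduction of the ``in particular'' clause is exactly the paper's own reasoning in the $EI_{19}$ census, and you are right that transitivity plays no role.
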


\begin{teorema}[Cardano's formula; \emph{casus irreducibilis} \cite{Cox}]\label{teo:cardano}
Let $t^3+Pt+Q$ be a depressed cubic over a real field, with discriminant quantity
$\Delta=\tfrac{Q^2}{4}+\tfrac{P^3}{27}$. Its real roots are
$t=\sqrt[3]{-\tfrac{Q}{2}+\sqrt{\Delta}}+\sqrt[3]{-\tfrac{Q}{2}-\sqrt{\Delta}}$. If the
cubic is irreducible with three distinct real roots then $\Delta<0$ (the \emph{casus
irreducibilis}): the radicands are non-real, and no expression of the roots by
\emph{real} radicals exists; the real cube root is supplied by the Beloch fold $O6$
(origametry \cite{Hull}), not by ruler and compass.
\end{teorema}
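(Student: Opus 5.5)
The plan is to treat the two assertions separately: first derive Cardano's formula by the classical substitution, and then prove the \emph{casus irreducibilis} clause by a Hölder-type descent through a real radical tower. The origami remark at the end is not part of the algebra and will simply be cited to the Beloch-fold literature \cite{Hull,Alperin}.

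\emph{Cardano's formula.} Substitute $t=u+v$ into $t^3+Pt+Q=0$. This gives
\[
u^3+v^3+(3uv+P)(u+v)+Q=0.
\]
Impose the auxiliary condition $uv=-P/3$. Then $u^3+v^3=-Q$ and $u^3v^3=-P^3/27$, so $u^3$ and $v^3$ are the roots of the quadratic $z^2+Qz-P^3/27$. Their values are $-\tfrac Q2\pm\sqrt{\Delta}$, which yields the stated expression. Choosing the three pairs of cube roots compatible with $uv=-P/3$ recovers all three roots.

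\emph{Sign of $\Delta$.} A direct computation gives $\operatorname{disc}(t^3+Pt+Q)=-4P^3-27Q^2=-108\,\Delta$. Since the discriminant equals $\prod_{i<j}(t_i-t_j)^2$, it is positive exactly when there are three distinct real roots. In that case $\Delta<0$, so $\sqrt{\Delta}$ is imaginary and the radicands $-\tfrac Q2\pm\sqrt\Delta$ are non-real.

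\emph{Non-existence of a real radical expression.} This is the main step. Let $K\subseteq\R$ be the base field and suppose a root $t_1$ lies in a real radical tower $K=K_0\subset K_1\subset\cdots\subset K_m\subseteq\R$. Each step has the form $K_{i+1}=K_i(a_i)$ with $a_i^{p_i}\in K_i$ and $p_i$ prime; any radical tower can be refined to prime exponents. The argument then runs as follows.
\begin{enumerate}[leftmargin=2em,itemsep=1pt]
\item Replace $K$ by $K(\sqrt{D})$, where $D=-108\Delta>0$. This field is real, and the cubic stays irreducible because a quadratic extension cannot contain a root of an irreducible cubic.
\item Choose the least $i$ such that $f$ is reducible over $K_{i+1}$. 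Since $f$ is a cubic, reducibility means it acquires a root there.
\item For a real field $K_i$, real $a\notin K_i$ and prime $p$, the polynomial $x^p-a^p$ is irreducible over $K_i$. The other complex roots $\zeta a$ are non-real, so no proper factor can have coefficients in $K_i$. Hence $[K_{i+1}:K_i]=p$.
\item Since $f$ is irreducible over $K_i$ and has a root in $K_{i+1}$, we get $3\mid p$, so $p=3$ and $K_{i+1}=K_i(t_j)$ for some root $t_j$.
\item Because $\sqrt D\in K_i$, the extension $K_i(t_j)$ is the splitting field of $f$ and is therefore Galois over $K_i$. It then contains all roots of the irreducible polynomial $x^3-a^3$, including the non-real root $\zeta_3 a$. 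This contradicts $K_{i+1}\subseteq\R$.
\end{enumerate}

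The delicate points, which I would write out carefully, are the irreducibility of $x^p-b$ over a real field in item 3 and the normality argument in item 5. The remaining steps are bookkeeping.
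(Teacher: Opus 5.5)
The paper does not prove this statement; it lists it among the classical results ``cited, not proved here'' and refers to Cox. Your argument is the standard H\"older-type descent found there, and it is correct apart from one faulty justification. The steps match: adjoin $\sqrt D$, refine to prime exponents, take the first step at which $f$ acquires a root, force $p=3$, and use the square discriminant to make $K_i(t_j)/K_i$ normal, so that the non-real $\zeta_3a$ lands in $K_{i+1}\subseteq\R$. The Cardano derivation, the identity $-4P^3-27Q^2=-108\Delta$, and items 1, 2, 4 and 5 are correct. In item 1 you should say explicitly that the tower is lifted to $K_i(\sqrt D)$, or that $\sqrt D$ is prepended as a first real radical step.

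The faulty step is item 3. Having non-real roots does not stop a proper factor from having real coefficients: for odd $p$ the conjugate pair $\zeta^k a,\zeta^{-k}a$ gives the real quadratic $x^2-2a\cos(2\pi k/p)\,x+a^2$. The claim is still true, but the reason is the constant term:
\begin{itemize}
\item Suppose $g\mid x^p-a^p$ is monic over $K_i$ of degree $d$ with $0<d<p$. Its constant term is $\pm\zeta^s a^d\in K_i\subseteq\R$.
\item Since $a\neq0$ is real, $\zeta^s$ must be real, so $\zeta^s=\pm1$ and hence $a^d\in K_i$.
\item Together with $a^p\in K_i$ and $\gcd(d,p)=1$, B\'ezout gives $a\in K_i$, a contradiction.
\end{itemize}

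The clause ``not by ruler and compass'' needs no citation either. It follows from $[K(t):K]=3$, or directly from your result, because a quadratic tower over $K$ inside $\R$ is a real radical tower. Only the $O6$ clause is a genuine citation, exactly as in the paper.
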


\begin{algoritmo}[Gram--SVD faithfulness certifier]\label{alg:gram}
Given a realization $p$: centre the points, take the SVD of the $n\times2$ coordinate
matrix and check exactly two singular values exceed a tolerance (planar, rank~$2$);
compute $e_{\max}=\max_{uv\in E}\bigl||p(u)-p(v)|-1\bigr|$, the minimal pairwise
separation, and the number of non-edges within tolerance of distance~$1$. Declare
\emph{faithful} iff rank $=2$, $e_{\max}$ below tolerance, separation positive, and no
extra unit distance. This runs in $O(n^2)$ --- the ``easy'' (verification) side of the
$\exists\R$ problem of \S\ref{sec:er}.
\end{algoritmo}

\begin{algoritmo}[Exact referential certificate]\label{alg:refcert}
Order the vertices as in Algorithm~\ref{alg:cont}. For each tri-vertex $v$ from
neighbours $A,B$ emit the two \emph{known} equations \eqref{eq:refcert} (linear
radical axis $+$ unit chord); for each free vertex the single unit-circle condition; for
the base the rational values; for the two closures the unit conditions. The realization
is the unique solution of this triangular chain over $\Q$, so the twenty-nine
construction edges are certified \emph{exactly and referentially}, with small-height
coefficients, independently of the degree-$20$ absolute field
(Proposition~\ref{prop:radaxis}).
\end{algoritmo}

\begin{algoritmo}[Modular census (diagnostic)]\label{alg:modular}
Over $\mathrm{GF}(p)$ propagate the construction with finite-field square roots
(Tonelli--Shanks), sweeping the free angles over the circle points and all sign branches,
and count closures. The count $N(p)$ is a coarse diagnostic only: at ramified/special
primes it is contaminated by degenerate reductions (Remark~\ref{obs:pitfall}), so it does
\emph{not} replace Algorithm~\ref{alg:gal}.
\end{algoritmo}

\subsection{Pseudocode summary}\label{ssec:pseudo}
For reproducibility we record compact pseudocode for the four computational cores;
the remaining algorithms are one-line specialisations of these. We write
$p:\{v_1,\dots,v_{17}\}\to\R^2$ for the realization and $\theta=(\theta_4,\theta_5)$
for the two free angles.

\medskip
\noindent\hrulefill\\
\textsc{Procedure 1 --- High-precision realization} (cf.\ Alg.~\ref{alg:newton},~\ref{alg:gram}).\\
\textit{Input:} six-digit seed $\theta^{(0)}$; target precision $\mathrm{dps}$.\quad
\textit{Output:} faithful realization $p$, or \textsc{reject}.
\begin{enumerate}[topsep=2pt,itemsep=0pt,leftmargin=2.2em,label=\arabic*.]
\item gauge-fix $v_{17}\gets(0,0)$, $v_{11}\gets(-1,0)$; set working precision to $\mathrm{dps}$ digits (\texttt{mpmath}).
\item form the $30$ edge residuals $F(\theta)=\bigl(|p(u)-p(v)|^2-1\bigr)_{uv\in E\setminus\text{base}}$.
\item \textbf{repeat} $\theta\gets\theta-J_F(\theta)^{+}F(\theta)$ (Newton step) \textbf{until} $\|F(\theta)\|<10^{-\mathrm{dps}}$.
\item \textbf{if} \textsc{Gram--SVD}$(p)$ fails (rank $\neq2$, or an extra unit distance) \textbf{return} \textsc{reject}; \textbf{else return} $p$.
\end{enumerate}
\noindent\hrulefill

\medskip
\noindent\hrulefill\\
\textsc{Procedure 2 --- Continuity reduction} (cf.\ Alg.~\ref{alg:cont}).\\
\textit{Input:} adjacency of $EI_{17}$; base edge $(v_{11},v_{17})$.\quad
\textit{Output:} free angles $(\theta_4,\theta_5)$ and closures $g_1,g_2$.
\begin{enumerate}[topsep=2pt,itemsep=0pt,leftmargin=2.2em,label=\arabic*.]
\item mark $v_{11},v_{17}$ \emph{placed}; queue the remaining vertices.
\item \textbf{while} some vertex $v$ is unplaced:
  \begin{enumerate}[topsep=1pt,itemsep=0pt,leftmargin=1.6em,label=(\alph*)]
  \item \textbf{if} $v$ has two placed neighbours $A,B$ (\emph{tri}): place
        $v=\tfrac{A+B}2\pm\sqrt{1-\tfrac{|A-B|^2}4}\,\tfrac{(B-A)^\perp}{|A-B|}$ (radical axis);
  \item \textbf{else if} $v$ has one placed neighbour: introduce a \emph{free} angle $\theta$ for $v$.
  \end{enumerate}
\item each non-tree edge becomes a unit-length \emph{closure} equation.
\item eliminate all but $(\theta_4,\theta_5)$ $\Rightarrow$ closures $g_1(\theta_4,\theta_5),g_2(\theta_4,\theta_5)$; \textbf{return} $(\theta_4,\theta_5;\,g_1,g_2)$.
\end{enumerate}
\noindent\hrulefill

\medskip
\noindent\hrulefill\\
\textsc{Procedure 3 --- Galois classification by Frobenius} (cf.\ Alg.~\ref{alg:gal}).\\
\textit{Input:} validated minimal polynomial $m(x)\in\Z[x]$, $\deg m=n$; prime bound $P$.\quad
\textit{Output:} $\operatorname{Gal}(m)\le S_n$.
\begin{enumerate}[topsep=2pt,itemsep=0pt,leftmargin=2.2em,label=\arabic*.]
\item $C\gets\emptyset$.
\item \textbf{for} each prime $p\le P$ with $p\nmid\operatorname{disc}(m)$: factor $m\bmod p$ and append the multiset of factor degrees (Frobenius cycle type) to $C$.
\item \textbf{if} $C$ contains a $q$-cycle with $q$ prime, $n/2<q<n-2$: then $\operatorname{Gal}\supseteq A_n$ (Jordan).
\item \textbf{if} $C$ contains an odd permutation \textbf{return} $S_n$; \textbf{else return} $A_n$.
\end{enumerate}
\noindent\hrulefill

\medskip
\noindent\hrulefill\\
\textsc{Procedure 4 --- Exact referential certificate} (cf.\ Alg.~\ref{alg:refcert}).\\
\textit{Input:} vertex order from Procedure~2; neighbours $A,B$ of each tri-vertex.\quad
\textit{Output:} triangular system over $\Q$ certifying the $29$ construction edges.
\begin{enumerate}[topsep=2pt,itemsep=0pt,leftmargin=2.2em,label=\arabic*.]
\item $S\gets\{\,v_{11}=(-1,0),\ v_{17}=(0,0)\,\}$ (rational base).
\item \textbf{for} each tri-vertex $v$ with placed $A,B$: add the linear radical axis $2(B-A)\cdot v=|B|^2-|A|^2$ and the unit chord $|v-A|^2=1$.
\item \textbf{for} each free vertex $v$: add the single unit-circle condition; then add the two closure unit-conditions.
\item \textbf{return} $S$ (unique solution over $\Q$; the edges are certified exactly, with small-height coefficients).
\end{enumerate}
\noindent\hrulefill

\begin{observacao}[Two pitfalls, recorded honestly]\label{obs:pitfall}
(i) Low-precision \textsc{PSLQ} returns \emph{spurious} minimal polynomials: degrees $8$
(at $160$ digits), $9$ (at $220$), $10$ (at $330$), \dots, each verifying at the
precision it was found and \emph{collapsing} at higher precision. Only near
$\sim2500$ digits does the true, irreducible degree-$20$ polynomial (leading coefficient
$\sim10^{97}$) appear and persist (Figure~\ref{fig:poly}). (ii) Point counts over
$\mathrm{GF}(p)$ gave $N(p)\in\{0,20,24,\dots\}$, but at $p=23,37$ \emph{every} such
solution is degenerate (coincident vertices mod $p$); the modular count does not measure
the faithful field. Neither shortcut is reliable; the exact irreducible polynomial and
its Frobenius census are.
\end{observacao}

\begin{observacao}[A concrete spurious candidate at machine precision]
\label{obs:spurious10}
The pitfall of (i) can be exhibited explicitly. Applying integer-relation detection
(\texttt{algdep}/PSLQ, or Mathematica's \texttt{RootApproximant}) to the
\emph{machine-precision} coordinate $x_{v_1}\approx-2.49881$ of the clean realization
(the precision at which published embeddings, e.g.\ \textsc{GraphData}, are stored)
yields the monic polynomial
\[
  q(x)=x^{10}+2x^{9}-2x^{8}+4x^{6}-2x^{5}+2x^{4}+6x^{3}-2x^{2}+5,
\]
irreducible over $\Q$, whose real root
$r=-2.498806168621959715\ldots$ agrees with $x_{v_1}$ to about seven digits ---
a thoroughly convincing \emph{false} minimal polynomial. Evaluating the certified
degree-$20$ polynomial $p$ at $r$ gives $p(r)\approx3.8\times10^{92}\neq0$, whereas
$p(x_{v_1})=0$ exactly; the two numbers part ways after the seventh digit. The
information count explains the phenomenon: matching $d$ digits constrains candidates
only up to total height $\sim10^{d}$, so at machine precision a height-$6$, degree-$10$
candidate always exists, while the true polynomial (degree $20$, height $\sim10^{97}$)
requires $\gtrsim20\times97\approx1940$ digits to be seen at all --- hence the
$2600$-digit recomputation, the validation $|p(\alpha)|<10^{-1900}$, and the symbolic
edge check in $\Q(\alpha)$.
\end{observacao}

\section{Results}\label{sec:results}

\subsection{Degree and irreducibility}
Refining the realization (Algorithm~\ref{alg:newton}) to $2600$ digits and applying
Algorithm~\ref{alg:pslq}, the coordinate $x_{v_1}$ satisfies an integer polynomial of
degree $20$, verified to $|p(\alpha)|<10^{-1900}$; the polynomial is \emph{irreducible}
over $\Q$. Hence $[\Q(\alpha):\Q]=20=2^2\cdot5$, and $\Q(\alpha)$ has no proper subfield
(primitive). This equals the degree of the closure resultant, i.e.\ the number of
complex realizations. The coefficient heights reach $\sim10^{97}$
(Figure~\ref{fig:poly}), which is why any integer-relation search needs $\sim2000$
digits and a computer-algebra backend.

\begin{figure}[htbp]
\centering
\includegraphics[width=.8\textwidth]{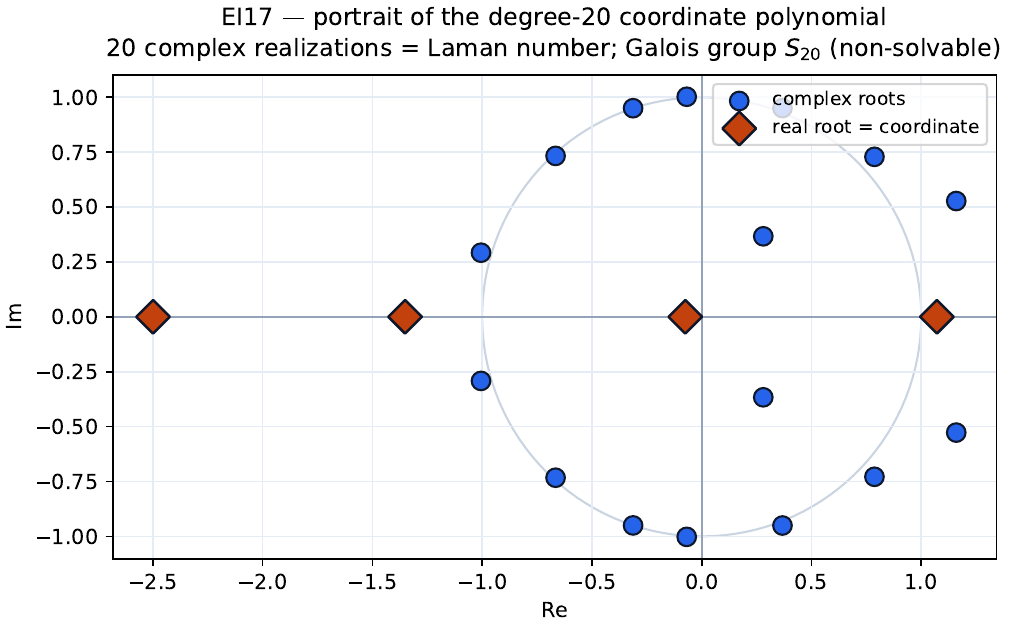}
\caption{Coefficient heights of the degree-$20$ minimal polynomial of a coordinate of
$EI_{17}$: the leading coefficient is $\sim10^{97}$. Such height is the reason
low-precision integer-relation searches fail (Remark~\ref{obs:pitfall}).}
\label{fig:poly}
\end{figure}

\subsection{The Galois group is $S_{20}$}
Factoring the degree-$20$ polynomial modulo $\sim200$ primes (Algorithm~\ref{alg:gal})
gives Frobenius cycle types including a $20$-cycle $(20)$, a $19$-cycle $(1,19)$, a
$17$-cycle $(3,17)$, and every partition of $20$; element orders are divisible by all
primes up to $19$ (Table~\ref{tab:cycles}).

\begin{table}[htbp]
\centering
\caption{A sample of observed Frobenius cycle types (factor degrees mod $p$).}
\label{tab:cycles}
\begin{tabular}{ll}
\toprule
cycle type & meaning \\
\midrule
$(20)$ & $20$-cycle: odd permutation $\Rightarrow$ group $\not\subseteq A_{20}$ \\
$(1,19)$ & $19$-cycle fixing $1$ point \\
$(3,17)$ & $17$-cycle fixing $3$ points $\Rightarrow$ contains $A_{20}$ (Jordan) \\
$(9,11),(6,14),(4,16),\dots$ & orders up to $\mathrm{lcm}$ with primes $\le19$ \\
\bottomrule
\end{tabular}
\end{table}

\begin{lema}\label{lem:galois}
$\Gal(p/\Q)=S_{20}$.
\end{lema}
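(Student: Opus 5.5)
The argument is the standard Dedekind--Jordan one, applied to the validated, irreducible degree-$20$ polynomial $p$. We use only three facts from earlier in the paper: irreducibility of $p$ over $\Q$, Dedekind's theorem (Theorem~\ref{teo:dedekind}), and Jordan's theorem (Theorem~\ref{teo:jordan-restated}). Put $G=\Gal(p/\Q)\le S_{20}$, acting on the $20$ complex roots.

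\emph{Step 1 (transitivity).} Since $p$ is irreducible, $G$ is transitive.

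\emph{Step 2 (cycles from Frobenius).} For Dedekind's theorem we make $p$ monic by the usual substitution $x\mapsto x/a_{20}$ after scaling, and we only use primes not dividing $a_{20}\operatorname{disc}(p)$. The plan is to certify the primes $p_1$ and $p_2$ from Table~\ref{tab:cycles} explicitly.
\begin{itemize}
\item At $p_1$, $p\bmod p_1$ is squarefree and factors with degrees $(3,17)$. Hence $G$ contains an element $\sigma$ of cycle type $3\cdot17$. Then $\sigma^{3}$ is a $17$-cycle, because $\gcd(3,17)=1$: cubing kills the $3$-cycle and leaves a $17$-cycle.
\item At $p_2$, $p\bmod p_2$ is irreducible. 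Hence $G$ contains a $20$-cycle, which is an odd permutation.
\end{itemize}
The finite check at each prime consists of computing the residue of the discriminant modulo the prime and running a distinct-degree factorization. Both are routine and reproducible.

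\emph{Step 3 (primitivity).} This is the step I expect to be the main obstacle. Jordan's theorem needs primitivity, and the paper's earlier phrase ``no proper subfield'' does not follow from irreducibility alone. I would argue directly, using the $17$-cycle $\tau\in G$. Suppose $G$ preserved a nontrivial block system, with blocks of size $b$ where $b\mid 20$ and $1<b<20$. The element $\tau$ has order $17$, so it permutes the $20/b\le10$ blocks with orbits of length $1$ or $17$. Orbits of length $17$ are impossible among at most $10$ blocks, so $\tau$ fixes every block. But $\tau$ moves $17$ points in a single cycle, so that cycle must lie inside one block, forcing $b\ge17$. Since $b$ divides $20$, this gives $b=20$, a contradiction. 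Hence $G$ is primitive. This avoids any subfield computation; the $20$-cycle alone would only show transitivity.

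\emph{Step 4 (conclusion).} $G$ is primitive and contains a $17$-cycle, and $17\le 20-3$. By Theorem~\ref{teo:jordan-restated}, $G\supseteq A_{20}$. Since $G$ also contains the odd $20$-cycle, $G\not\subseteq A_{20}$, and therefore $G=S_{20}$.

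The mathematical content is Steps 3–4. The real risk is whether the certified polynomial and its specific factorizations at the two primes are correct. So I would record those two primes together with their factor degrees (and a discriminant non-vanishing check) as the verifiable certificate.
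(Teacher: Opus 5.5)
Your proof is correct and follows the paper's route: Dedekind's theorem gives a $17$-cycle and a $20$-cycle, Jordan's theorem gives $A_{20}\le\Gal(p/\Q)$, and the odd $20$-cycle upgrades this to $S_{20}$. Your version is more careful than the paper's. The paper applies Jordan to a merely \emph{transitive} group and describes the type $(3,17)$ as a $17$-cycle with three fixed points. Your block argument ($17>20/2$ forces primitivity), your passage to $\sigma^{3}$, and your monic rescaling for Dedekind close exactly those gaps.
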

\begin{proof}
The factorization contains an irreducible factor of degree $17$ --- a $17$-cycle with
three fixed points. As $17$ is prime and $\le n-3=17$, a transitive group of degree $20$
containing such a cycle contains $A_{20}$ (Jordan's theorem \cite{DixonMortimer,Wielandt}).
The factorization also contains a $20$-cycle, which is an \emph{odd} permutation (a cycle
of even length), so the group is not contained in $A_{20}$. Hence $\Gal(p/\Q)=S_{20}$.
\end{proof}

\subsection{Main theorem}
\begin{teorema}\label{teo:principal}
Let $G$ be $EI_{17}$ (\textsc{HoG} 51375) and let $K$ be the coordinate field of the
certified faithful realization of Figure~\ref{fig:real}, with gauge $v_{17}=(0,0)$,
$v_{11}=(-1,0)$. Then a primitive coordinate of $K$ has an irreducible minimal
polynomial of degree $20=2^2\cdot5$ over $\Q$, and the Galois group of its splitting
field is the full symmetric group $S_{20}$ (order $20!$). Consequently:
\begin{enumerate}[leftmargin=2em,itemsep=1pt]
\item $K$ is \emph{not} ruler-and-compass constructible ($20$ is not a power of $2$);
\item since $S_{20}$ is \emph{not solvable} --- its only proper composition factor is the
non-abelian simple group $A_{20}$ --- by Galois's theorem $K$ is \emph{not solvable by
radicals}: it lies in no radical tower $K_{i+1}=K_i(\sqrt[n_i]{a_i})$. In particular $K$
is \emph{not} origami-constructible, at any fold order producing radicals;
\item nevertheless, by Proposition~\ref{prop:radaxis}, every vertex other than
$v_{11},v_{17}$ is an explicit tower of square roots \emph{over} the two free angles
$\theta_4,\theta_5$; the non-radicality is confined to those two angles, the roots of
the degree-$20$ closure resultant.
\end{enumerate}
\end{teorema}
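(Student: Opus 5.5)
The plan is to assemble the theorem from the certified computational facts of \S\ref{sec:results} together with the classical results collected in \S\ref{ssec:classical}. The argument has three stages.

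\emph{First stage: degree and irreducibility.} I would establish these exactly rather than only numerically. Let $\alpha=x_{v_1}$ and let $p$ be the degree-$20$ integer polynomial found by Algorithm~\ref{alg:pslq} at $2600$ digits.
\begin{enumerate}[leftmargin=2em,itemsep=1pt]
\item To show $p(\alpha)=0$ exactly, I would not rely on the residual bound $|p(\alpha)|<10^{-1900}$ alone. Instead I would compute the resultant $\Res_{\theta_5}(g_1,g_2)$ symbolically from the closure system of \S\ref{sec:constr}. The closures would be written in half-angle or rational parametrization, after clearing the square roots of the tower by taking norms. I would then check that $p$ divides the image of this resultant in the variable $x_{v_1}$.
\item An alternative exact check is the symbolic edge verification in $\Q(\alpha)$. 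Here one expresses all coordinates as polynomials in $\alpha$ and verifies the $31$ edge equations modulo $p$. Combined with isolation of the real root near $-2.4988\ldots$ by interval arithmetic, this pins $\alpha$ down as a root of $p$.
\item Irreducibility over $\Q$ I would certify modularly. A single prime giving the factorization pattern $(20)$ already proves it, and the census of Table~\ref{tab:cycles} contains such a prime. Alternatively, patterns $(1,19)$ and $(3,17)$ are incompatible with any nontrivial degree splitting, which again forces irreducibility.
\item This gives $[\Q(\alpha):\Q]=20$. To see that $\alpha$ is primitive for $K$, I would use the referential chain of Proposition~\ref{prop:radaxis}: every coordinate lies in $\Q(\theta_4,\theta_5,\text{square roots})$. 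I would then check that each coordinate is expressible in $\Q(\alpha)$; the exact edge check already produces these expressions.
\end{enumerate}

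\emph{Second stage: the Galois group.} I would invoke Lemma~\ref{lem:galois}, whose proof combines Theorem~\ref{teo:dedekind}, Theorem~\ref{teo:jordan-restated} and a parity argument. Two points need care.
\begin{enumerate}[leftmargin=2em,itemsep=1pt]
\item Jordan's theorem requires \emph{primitivity}, not only transitivity. I would derive primitivity from the $19$-cycle of type $(1,19)$. That element fixes one point and acts transitively on the other $19$, so the point stabiliser is transitive on the remaining points. Hence the group is $2$-transitive and therefore primitive.
\item The $17$-cycle then satisfies $17\le n-3$, so the group contains $A_{20}$. The $20$-cycle is an odd permutation, so the group is $S_{20}$.
\item Since $p$ need not be monic (its leading coefficient is about $10^{97}$), I would apply Dedekind's theorem at primes dividing neither the discriminant nor the leading coefficient, as in Definition~\ref{def:census}.
\end{enumerate}

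\emph{Third stage: the consequences.}
\begin{enumerate}[leftmargin=2em,itemsep=1pt]
\item Item (1) follows from Definition~\ref{def:tiers}: $20$ is not a power of $2$, so $K$ is not ruler-and-compass constructible.
\item For item (2), $S_{20}$ has composition factors $A_{20}$ and $C_2$, with $A_{20}$ non-abelian simple, so $S_{20}$ is not solvable. By Galois's theorem $\alpha$ is not expressible by radicals. Every origami tier produces only elements of solvable extensions ($\{2,3\}$-towers, or radical towers for multifold origami), so $K$ is not origami-constructible.
\item Item (3) is a restatement of Proposition~\ref{prop:radaxis}, iterated along the construction order of Algorithm~\ref{alg:cont}.
\end{enumerate}

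The main obstacle I expect is the first stage: turning the numerical PSLQ identity into an exact statement. The pitfalls of Remark~\ref{obs:pitfall} show that numerical agreement is not proof. The natural exact route is symbolic elimination of the full closure system, but it is expensive given coefficient heights around $10^{97}$. I would first try to compute it modularly, eliminating at many primes and lifting by the Chinese remainder theorem. Failing that, I would use the certified interval root isolation combined with the symbolic edge check in $\Q(\alpha)$.
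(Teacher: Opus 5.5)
Your proposal follows the paper's own route. The degree-$20$ polynomial comes from validated \textsc{PSLQ}. The Galois group comes from the Frobenius census via Dedekind, Jordan and parity (Lemma~\ref{lem:galois}). The three consequences are then read off from Definition~\ref{def:tiers}, the non-solvability of $S_{20}$, and Proposition~\ref{prop:radaxis}. At several points you are more careful than the paper. In \S\ref{sec:results} the paper certifies $p(\alpha)=0$ by the numerical residual $|p(\alpha)|<10^{-1900}$, mentioning a symbolic edge check only in passing. It asserts irreducibility without giving a method. It applies Jordan's theorem to a merely transitive group, after claiming without justification that $\Q(\alpha)$ has no proper subfield. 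Your modular irreducibility argument and your derivation of primitivity from the $(1,19)$ Frobenius via $2$-transitivity close those holes correctly.

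One step is wrong as written, and the paper makes the same slip. A prime with factorization pattern $(3,17)$ yields a Frobenius that is a $3$-cycle times a disjoint $17$-cycle, not a $17$-cycle, so Jordan's theorem does not apply to it directly. Take its cube instead. This kills the $3$-cycle and, since $\gcd(3,17)=1$, leaves a genuine $17$-cycle fixing three points; with your primitivity argument, Jordan then gives $A_{20}$.

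There are also three smaller points.
\begin{enumerate}[leftmargin=2em,itemsep=1pt]
\item In route (i) of your first stage, knowing $p\mid E$ for an eliminant $E$ with $E(\alpha)=0$ only shows that $\alpha$ is a root of $p$ or of $E/p$. You must also certify, by interval evaluation, that $(E/p)(\alpha)\neq0$.
\item Route (ii) similarly needs local uniqueness of the exact realization near the numerical one. This is supplied by the nonsingular gauge-fixed Jacobian of the isostatic realization.
\item Your check that every coordinate lies in $\Q(\alpha)$ is not a formality. The reflection $(x,y)\mapsto(x,-y)$ preserves the gauge and fixes every $x$-coordinate. So if the mirror realization is Galois-conjugate to the certified one, the $y$-coordinates generate a quadratic extension of $\Q(\alpha)$, and no $x$-coordinate is primitive for $K$. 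Items (1)--(3) survive either way, since $\Q(\alpha)\subseteq K$.
\end{enumerate}
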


\begin{observacao}[The pair $51375$ / $51376$]\label{obs:pair}
The two triangle-free Exoo--Ismailescu neighbours are opposite extremes of
constructibility. The $19$-vertex graph is origami: degree $12=2^2\cdot3$, Galois group
$12T236$ of order $2304=2^8\cdot3^2$, \emph{solvable}, needing the cubic fold $O6$
\cite{ei19}. The $17$-vertex graph is exotic: degree $20=2^2\cdot5$, Galois group
$S_{20}$, \emph{non-solvable}. In the Laman-number reading (\S\ref{sec:conjecture}),
$\supp(N)=\{2,3\}$ for $51376$ and $\{2,5\}$ for $51375$: the prime $5$ is the exact
signature of the obstruction.
\end{observacao}

\begin{table}[htbp]
\centering
\caption{The constructibility hierarchy read off the coordinate field (Def.~\ref{def:tiers}),
with unit-distance / Laman examples.}
\label{tab:tiers}
\small\setlength{\tabcolsep}{4pt}
\begin{tabular}{lll}
\toprule
tier & Galois group of $K$ & example \\
\midrule
compass, $\supp=\{2\}$ & $2$-group & $\mathrm{UnitDistance}\{21,2\}$: $\Q(\cos\tfrac{2\pi}{7})$, cyclic \\
origami, $\supp=\{2,3\}$ & $\{2,3\}$-group (solvable) & $EI_{19}$: degree $12$, $12T236$ \cite{ei19} \\
exotic, $p\ge5$ & non-$\{2,3\}$ (here non-solvable) & $EI_{17}$: degree $20$, $S_{20}$ (this paper) \\
\bottomrule
\end{tabular}
\end{table}

\subsection{Non-global rigidity and two real realizations}
Being isostatic but not redundantly rigid, $EI_{17}$ is \emph{not} globally rigid ---
Hendrickson's necessary conditions ($3$-connectivity and redundant rigidity) fail
\cite{Connelly,Hendrickson}. Accordingly the degree-$20$ solution set contains
\emph{more than one} non-congruent real faithful realization: besides the rational-based
one certified here, a combinatorially distinct real realization occurs, with a different
pairwise-distance multiset. Since $\Gal(G)=S_{20}$ acts on the twenty complex solutions
as the \emph{full} symmetric group, the realizations are as algebraically independent as
possible --- the antithesis of the small, structured orbit of an origami graph.

\begin{observacao}[A second exotic example: the braced heptagon $\{35,1\}$]\label{obs:hept}
The exotic tier is not peculiar to $EI_{17}$. The braced regular heptagon $\{35,1\}$
(a $19$-vertex, $35$-edge Laman graph) has a closure field ramified at the non-Pierpont
primes $103$ and $587$, its Galois data carrying the factor $2^6\cdot103\cdot587$; it is
likewise beyond compass and single-fold origami. Together with $EI_{17}$ it shows the
exotic category is populated by natural small configurations, and that a prime $\ge5$ in
the realization count is a robust signature of it.
\end{observacao}

\section{The origami neighbour $EI_{19}$ (\textsc{HoG} 51376)}\label{sec:ei19}
We now certify the $19$-vertex neighbour, whose field is the \emph{opposite} tier:
solvable, origami. It has $n=19$, $|E|=35=2n-3$ (isostatic), and is the smallest
state-of-the-art UDG for which an origami obstruction is established exactly \cite{ei19}.
Fourteen \emph{base} vertices $v_1$--$v_{14}$ carry exact coordinates; five \emph{cap}
vertices form a pentagon $v_{15}\cdots v_{19}$ anchored at $v_1,v_2,v_3$
(Figure~\ref{fig:ei19}).

\begin{figure}[t]
\centering
\includegraphics[width=0.60\linewidth]{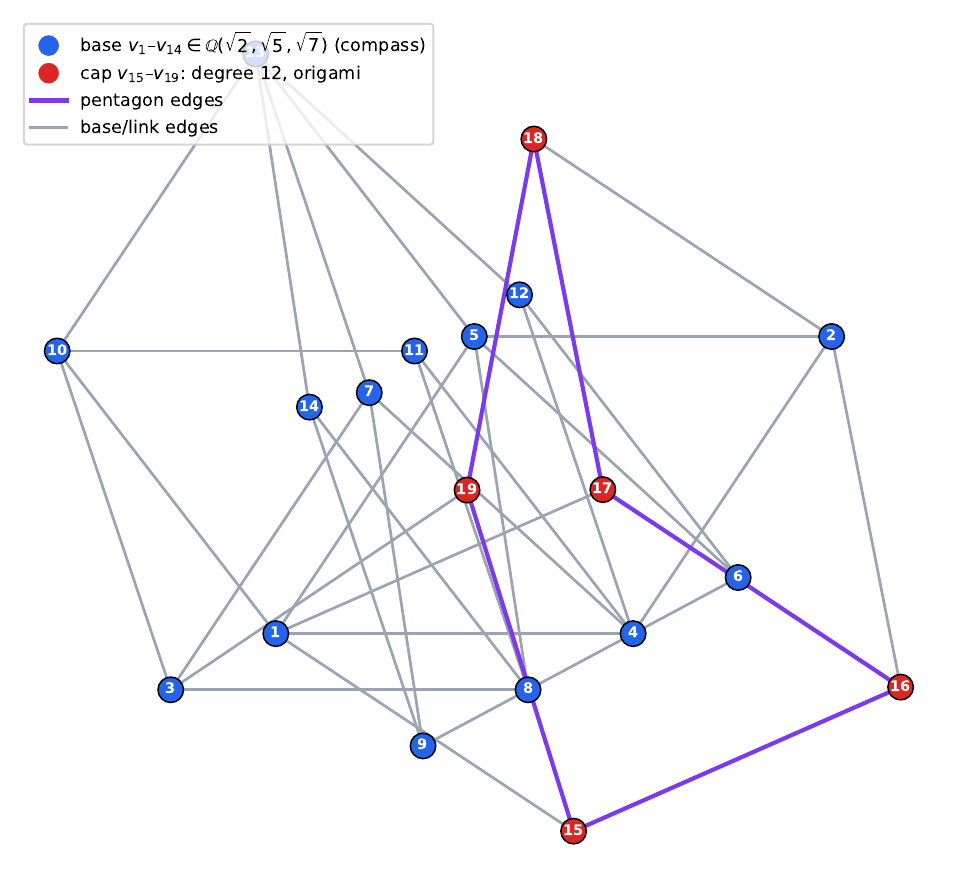}
\caption{Certified faithful realization of $EI_{19}$. Blue: the fourteen base vertices,
exact in $\Q(\sqrt2,\sqrt5,\sqrt7)$ (Table~\ref{tab:ei19coords}). Red: the five cap
vertices, whose field has degree $12$ over $\Q$. Purple edges are internal to the
pentagon.}
\label{fig:ei19}
\end{figure}

\subsection{Plane-geometry certificate of the base}\label{ssec:ei19plane}
As for $EI_{17}$, the construction is purely Euclidean: with the gauge $v_1=(0,0)$,
$v_4=(1,0)$, each base vertex is the intersection of two unit circles centred at placed
neighbours, hence a point on their \emph{radical axis} (Proposition~\ref{prop:radaxis}),
and is fixed \emph{exactly and referentially} by one linear equation
$2(B-A)\cdot v=|B|^2-|A|^2$ and one unit chord $|v-A|^2=1$ (Algorithm~\ref{alg:refcert}, Procedure~4). Solving this
triangular chain over $\Q$ places the
fourteen base coordinates in $\Q(\sqrt2,\sqrt5,\sqrt7)$ (degree $8$, \emph{compass}),
with basis $\{1,\sqrt2,\sqrt5,\sqrt7,\sqrt{10},\sqrt{14},\sqrt{35},\sqrt{70}\}$; the $24$
edges internal to the base then give squared distance $1$ \emph{identically} by exact
arithmetic (Table~\ref{tab:ei19coords}). Thus the whole base is certified by plane
geometry alone, independently of the degree-$12$ cap field --- exactly the optional
referential route of \S\ref{sec:methods}.

\begin{table}[t]
\centering\small\renewcommand{\arraystretch}{1.3}
\caption{Exact coordinates of the fourteen $EI_{19}$ base vertices ($v_1=(0,0)$,
$v_4=(1,0)$), all in $\Q(\sqrt2,\sqrt5,\sqrt7)$; the $24$ internal edges verified
$=1$ by exact arithmetic.}
\label{tab:ei19coords}
\begin{tabular}{@{}clll@{}}
\toprule
$v$ & $x_v$ & $y_v$ & Field \\ \midrule
$v_{1}$ & $0$ & $0$ & $\Q$\\
$v_{2}$ & $\tfrac{14}{9}$ & $\tfrac{2}{9}\sqrt{14}$ & $\Q(\sqrt{14})$\\
$v_{3}$ & $-\tfrac{1}{9}\sqrt7$ & $-\tfrac{1}{9}\sqrt2$ & $\Q(\sqrt2,\sqrt7)$\\
$v_{4}$ & $1$ & $0$ & $\Q$\\
$v_{5}$ & $\tfrac{5}{9}$ & $\tfrac{2}{9}\sqrt{14}$ & $\Q(\sqrt{14})$\\
$v_{6}$ & $1+\tfrac{1}{9}\sqrt7$ & $\tfrac{1}{9}\sqrt2$ & $\Q(\sqrt2,\sqrt7)$\\
$v_{7}$ & $\tfrac{5}{9}-\tfrac{1}{9}\sqrt7$ & $-\tfrac{1}{9}\sqrt2+\tfrac{2}{9}\sqrt{14}$ & $\Q(\sqrt2,\sqrt7)$\\
$v_{8}$ & $1-\tfrac{1}{9}\sqrt7$ & $-\tfrac{1}{9}\sqrt2$ & $\Q(\sqrt2,\sqrt7)$\\
$v_{9}$ & $1-\tfrac{2}{9}\sqrt7$ & $-\tfrac{2}{9}\sqrt2$ & $\Q(\sqrt2,\sqrt7)$\\
$v_{10}$ & $-\tfrac{1}{18}\sqrt7-\tfrac{1}{18}\sqrt{70}$ & $-\tfrac{1}{18}\sqrt2+\tfrac{7}{18}\sqrt5$ & $\Q(\sqrt2,\sqrt5,\sqrt7)$\\
$v_{11}$ & $1-\tfrac{1}{18}\sqrt7-\tfrac{1}{18}\sqrt{70}$ & $-\tfrac{1}{18}\sqrt2+\tfrac{7}{18}\sqrt5$ & $\Q(\sqrt2,\sqrt5,\sqrt7)$\\
$v_{12}$ & $1+\tfrac{1}{18}\sqrt7-\tfrac{1}{18}\sqrt{70}$ & $\tfrac{1}{18}\sqrt2+\tfrac{7}{18}\sqrt5$ & $\Q(\sqrt2,\sqrt5,\sqrt7)$\\
$v_{13}$ & $\tfrac{5}{9}-\tfrac{1}{18}\sqrt7-\tfrac{1}{18}\sqrt{70}$ & $-\tfrac{1}{18}\sqrt2+\tfrac{7}{18}\sqrt5+\tfrac{2}{9}\sqrt{14}$ & $\Q(\sqrt2,\sqrt5,\sqrt7)$\\
$v_{14}$ & $1-\tfrac{1}{6}\sqrt7-\tfrac{1}{18}\sqrt{70}$ & $-\tfrac{1}{6}\sqrt2+\tfrac{7}{18}\sqrt5$ & $\Q(\sqrt2,\sqrt5,\sqrt7)$\\
\bottomrule
\end{tabular}
\end{table}

\subsection{The cap, the single free angle, and the origami theorem}
The five cap vertices are a function of a \emph{single} angle $\theta$ (the position of
$v_{15}$ on the unit circle about $v_1$): writing the unit-circle intersection (fold) as
$\Phi_\varepsilon(A,B)=\tfrac{A+B}{2}+\varepsilon\sqrt{1-\tfrac14\lVert A-B\rVert^2}\,
\tfrac{R(B-A)}{\lVert B-A\rVert}$, $R(x,y)=(-y,x)$,
\[
v_{15}=(\cos\theta,\sin\theta),\ v_{16}=\Phi_{+}(v_2,v_{15}),\ v_{17}=\Phi_{+}(v_1,v_{16}),\
v_{18}=\Phi_{-}(v_2,v_{17}),\ v_{19}=\Phi_{+}(v_3,v_{15}),
\]
and the realization closes through the single edge $(18,19)$:
$g(\theta)=\lVert v_{18}-v_{19}\rVert^2-1=0$, with $\theta=-33.5528\ldots^\circ$ and
orientations $(+,+,-,+)$ (Figure~\ref{fig:ei19constr}). Here \emph{one} free angle and
\emph{one} closure (against \emph{two} of each for $EI_{17}$) is the structural reason
the neighbour stays solvable.

\begin{figure}[t]
\centering
\includegraphics[width=\linewidth]{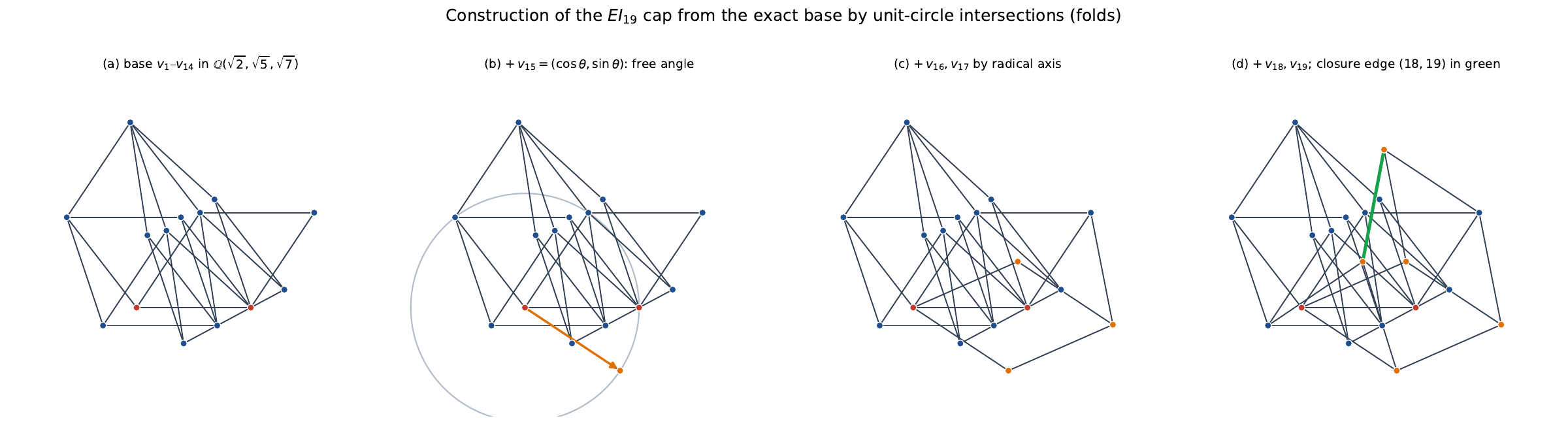}
\caption{Construction of the $EI_{19}$ cap from the exact base by unit-circle
intersections (folds): (a) base; (b) free vertex $v_{15}$ on the circle about $v_1$;
(c) $v_{16},v_{17}$; (d) $v_{18},v_{19}$ and the single closure edge $(18,19)$ (green),
which determines $\theta$.}
\label{fig:ei19constr}
\end{figure}

\begin{teorema}[Origami obstruction for $EI_{19}$ \cite{ei19}]\label{teo:ei19}
The coordinate $x_{15}=\cos\theta$ has minimal polynomial over $\Q$
\begin{multline*}
234365481\,x^{12}-1458274104\,x^{11}+2517258870\,x^{10}+2679075000\,x^{9}
-11544205617\,x^{8}\\
-81856656\,x^{7}+39529761684\,x^{6}-66881540208\,x^{5}+52962075495\,x^{4}\\
-22124598328\,x^{3}+4459486710\,x^{2}-292482120\,x+950177,
\end{multline*}
\emph{irreducible} of degree $12=2^2\cdot3$. Hence $[\Q(x_{15}):\Q]=12$ is not a power of
$2$ (not compass), and the Galois group is the \emph{solvable} transitive group
$G=12T236$, $|G|=2304=2^8\cdot3^2$, with subfield tower
$\Q\subset\Q(\sqrt7)\subset K_6\subset\Q(x_{15})$ of degrees $2,3,2$ and
$K_6=\Q[t]/(t^6-2t^5-101t^4+62t^3+1514t^2-3616t+2304)$. Therefore $x_{15}$ is
\emph{origami-constructible} and the cubic Beloch fold $O6$ is \emph{necessary}
(origametry \cite{Hull}).
\end{teorema}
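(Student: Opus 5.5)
Since $x_{15}=\cos\theta$ is obtained from a single free angle and a single closure, I would first make the closure equation explicit and only then turn to the Galois-theoretic claims. Put $c=\cos\theta$, $s=\sin\theta$ with $s^2=1-c^2$. Using the exact base coordinates of Table~\ref{tab:ei19coords}, I would run the cap construction $v_{16},v_{17},v_{18},v_{19}$ through $\Phi_\varepsilon$ with orientations $(+,+,-,+)$. Each application of $\Phi$ adjoins one square root $\sqrt{1-\tfrac14\lVert A-B\rVert^2}$, so $g(\theta)=\lVert v_{18}-v_{19}\rVert^2-1$ is an element of a tower of four quadratic extensions over $\Q(\sqrt2,\sqrt5,\sqrt7)(c,s)$.

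Next I would clear these radicals, together with $s$, by repeatedly multiplying $g$ by its conjugates under sign changes of the square roots. This produces a polynomial $P(c)\in\Q(\sqrt2,\sqrt5,\sqrt7)[c]$, and taking the norm down to $\Q$ gives a polynomial $N(c)\in\Z[c]$ that vanishes at $x_{15}$. I would then factor $N$ over $\Q$ and keep the factor that vanishes at the certified value $\theta=-33.5528\ldots^\circ$, evaluated at high precision. The claim is that this factor is exactly the displayed degree-$12$ polynomial. As an independent check, its PSLQ recovery (Algorithm~\ref{alg:pslq}) should persist at elevated precision.

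Irreducibility of the degree-$12$ factor follows from the exact factorization. Alternatively, Frobenius patterns give it: if for some prime the factorization is $(12)$, or if the factorizations at two primes have incompatible degree partitions, no factor over $\Q$ is possible. Degree $12$ is not a power of $2$, so item (1) of Definition~\ref{def:tiers} excludes compass constructibility.

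For the group I would compute subfields of $\Q(x_{15})$. First, $\sqrt7\in\Q(x_{15})$: this is detected by factoring the minimal polynomial over $\Q(\sqrt7)$ into two sextics. Second, the sextic $K_6$ is obtained by factoring over $K_6$, or as a resolvent, which yields the tower of degrees $2,3,2$. I would then identify $12T236$ with a transitive-group classifier such as Magma or PARI's \texttt{polgalois}-type routines, or with the resolvent method, cross-checked against the Frobenius census (Algorithm~\ref{alg:gal}). Every observed cycle type should have order divisible only by $2$ and $3$, and no element of order $5$, $7$ or $11$ should appear. Because $|G|=2^8 3^2$, Theorem~\ref{teo:burnside} gives solvability. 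Moreover, a $\{2,3\}$-group has a composition series with factors of order $2$ or $3$, so $x_{15}$ lies in a $\{2,3\}$-tower and is origami-constructible by Definition~\ref{def:tiers}(2).

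Necessity of $O6$ follows because $3\mid[\Q(x_{15}):\Q]$, so $x_{15}$ is not in any $2$-tower. The cubic step $K_6/\Q(\sqrt7)$ is real with real conjugates, so it falls in the casus irreducibilis of Theorem~\ref{teo:cardano}.

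The hardest step is establishing the exact group $12T236$ rather than merely solvability. Solvability alone is cheap: the tower with degrees $2,3,2$ together with Galois closures of bounded order already suffices. Pinning down the specific transitive group, however, requires either an exact Galois-group algorithm (Stauduhar's method) or elimination of every competing $\{2,3\}$-group of degree $12$ by resolvents. I would rely on the computer-algebra identification and treat the census as a consistency check.
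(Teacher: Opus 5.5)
Your proposal is correct and follows essentially the paper's route. You eliminate the closure $(18,19)$ by clearing the branch radicals and keep the irreducible degree-$12$ factor, which is the elimination described in the paper's appendix, alongside validated \textsc{PSLQ} at $\gtrsim2600$ digits. You factor over $\Q(\sqrt7)$ into two sextics to get the $2$--$3$--$2$ tower. You identify $12T236$ by computer algebra, with the Frobenius census and Burnside in support. You read off non-compass and the need for $O6$ from $3\mid12$.

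Your remark that solvability and origami-constructibility already follow rigorously from the $2$--$3$--$2$ tower tightens the paper's argument. With it, the census becomes a consistency check rather than a Cauchy-type proof that $|G|$ is a $\{2,3\}$-number. Two small caveats. First, the casus irreducibilis of the relative cubic needs the explicit check $\Delta<0$ that the paper performs; you only assert it. Second, PARI's \texttt{polgalois} covers only degree $\le11$, so the exact identification of $12T236$ needs Magma or a resolvent computation.
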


A Frobenius census over $2255$ good primes (Table~\ref{tab:ei19census}) shows every cycle
length is $2^i3^j$ and every element order lies in $\{1,2,3,4,6,8,12\}$; by Cauchy the
primes dividing $|G|$ are $\{2,3\}$ and by Burnside $p^aq^b$ the group is solvable ---
$G=12T236$ identified exactly in SageMath/PARI.

\begin{table}[t]
\centering\small\renewcommand{\arraystretch}{1.12}
\caption{Frobenius cycle-type census of the $EI_{19}$ polynomial over $2255$ good primes.
Every cycle length is $2^i3^j$; orders in $\{1,2,3,4,6,8,12\}$; group $12T236$,
$|G|=2304$.}
\label{tab:ei19census}
\begin{tabular}{@{}lll@{}}
\toprule
cycle type & order & density \\ \midrule
$(2,2,4,4)$ & $4$ & $0.219$\\
$(6,6)$ & $6$ & $0.188$\\
$(4,8)$ & $8$ & $0.131$\\
$(1,1,2,2,2,4)$ & $4$ & $0.074$\\
$(1,1,2,2,3,3)$ & $6$ & $0.061$\\
$(2,2,2,6)$ & $6$ & $0.057$\\
$(1^4,2^4)$ & $2$ & $0.050$\\
$(2^6)$ & $2$ & $0.049$\\
$(2,3,3,4)$ & $12$ & $0.048$\\
$(1,1,4,6)$ & $12$ & $0.044$\\
$(3,3,3,3)$ & $3$ & $0.025$\\
$(1^{12})$ & $1$ & $0.0013$\\
\bottomrule
\end{tabular}
\end{table}

\begin{observacao}[Radical tower and \emph{casus irreducibilis}]\label{obs:ei19casus}
The $2$--$3$--$2$ tower gives the radical form of $x_{15}$ ($\approx0.833377$). The cubic
step $\Q(\sqrt7)\subset K_6$ is generated by the real root $\beta$ of
$\beta^3-(1+2\sqrt7)\beta^2-(37+5\sqrt7)\beta+(64+16\sqrt7)=0$, whose Cardano discriminant
$\Delta=\tfrac{Q_0^2}{4}+\tfrac{P_0^3}{27}=-\tfrac{14126}{3}-\tfrac{10577\sqrt7}{6}<0$ puts
it in \emph{casus irreducibilis}: all three roots real, yet the formula needs cube roots
of complex numbers. Hence $x_{15}$, though real, admits \emph{no real radicals}; the cube
root is unavoidable --- the trisection obstruction, resolved by $O6$ and not by ruler and
compass. Finally $x_{15}=\tfrac{-P+\sqrt{P^2-4Q}}{2}$ over $K_6=\Q(\sqrt7,\beta)$ with $P,Q$
of small height in $\beta$. This is the exact opposite of $EI_{17}$, where no radical tower
exists at all ($S_{20}$).
\end{observacao}

\begin{observacao}[The difficulty is \emph{planar}, for both]\label{obs:ei19planar}
In $\R^3$, $EI_{19}$ (like $EI_{17}$) is far from isostatic: generic rigidity needs
$3n-6=51$ edges but there are $35$, giving $16$ internal degrees of freedom; the
realization variety is positive-dimensional and the field collapses. The degree-$12$
(resp.\ degree-$20$) difficulty is a phenomenon of the plane.
\end{observacao}

\begin{observacao}[Algebraic vs.\ constructive certificates; the \textsc{GraphData}
asymmetry]\label{obs:graphdata}
Mathematica's \textsc{GraphData} stores an exact embedding of $EI_{19}$: fourteen
vertices in closed radicals over $\Q(\sqrt2,\sqrt5,\sqrt7)$ and five as \texttt{Root}
objects of degrees $6$, $12$ and $24$ --- all $\{2,3\}$-numbers (Pierpont), as the
solvability of $12T236$ predicts. This is, however, only the \emph{algebraic}
certificate: a \texttt{Root} object names a number without exhibiting a construction.
Our construction supplies the strictly stronger \emph{constructive} certificate, in
Hull's origametry \cite{Hull}: an explicit Huzita--Hatori fold sequence realizing the
$\{2,3\}$-tower --- $O5$ folds for the quadratic layers and a single cubic Beloch fold
$O6$ --- so the tier (origami, not merely ``radical'') is established geometrically.
The asymmetry of the database is itself an empirical illustration of the dichotomy: for
$EI_{17}$ no radical (hence no \texttt{Root}-object of tolerable height) exists at all
--- its minimal polynomial has height $\sim10^{97}$ and Galois group $S_{20}$ --- and
\textsc{GraphData} accordingly stores that embedding only in floating point, which is
precisely the regime where spurious candidates such as
Remark~\ref{obs:spurious10} arise.
\end{observacao}

\section{The Laman-number conjecture}\label{sec:conjecture}
Over the generic base $K_0$ the field of one realization has degree exactly the Laman
number $N(G)$ \cite{Capco,GKKPS}, and one always has
$N\mid|\Gal(G)|\mid N!$, so $\supp(N)\subseteq\supp(|\Gal(G)|)$. This inclusion already
\emph{detects} obstructions: a prime $\ge5$ dividing $N$ forces a non-$\{2,3\}$ factor of
$|\Gal(G)|$. We conjecture the sharp form.

\begin{observacao}[Laman number as a constructibility sieve, conjecturally]\label{obs:conj}
For a Laman graph with \emph{non-hidden closures} (each Henneberg-2 closure of prime
degree with cyclic Galois group), $\supp(N)=\supp(|\Gal(G)|)$, and the tier of $K$ is
read from the prime support of $N=\prod_p p^{e_p}$ as in Definition~\ref{def:tiers}.
\end{observacao}

The two neighbours instantiate this from both sides. For $51376$,
$N=12=2^2\cdot3$ and $|\Gal|=2^8\cdot3^2$: the equality $\supp=\{2,3\}$ \emph{holds}, and
the sieve reads origami \cite{ei19}. For $51375$, $N=20=2^2\cdot5$: the sieve reads
exotic ($5\mid N$), correctly. Here, however, $|\Gal|=|S_{20}|=20!$ has
$\supp=\{2,3,5,7,11,13,17,19\}\supsetneq\{2,5\}$: the \emph{equality} fails --- $S_{20}$
\emph{hides} the primes $3,7,11,13,17,19$. Thus $51375$ is the \emph{extreme}
hidden-closure case, opposite to a quartic-$S_4$ counterexample (which hides a single
$3$); it violates the equality maximally, yet the tier is still detected because the
obstructing prime $5$ is already visible in $N$. The pair both confirms the detection
direction and delimits the domain of the conjecture.

\section{Complexity: an $\exists\R$-completeness witness}\label{sec:er}
Deciding whether a graph has a faithful UDG (or an isostatic) realization is
$\exists\R$-complete \cite{Schaefer,AbelDemaine}. The relevant chain of classes is
\[
  \mathrm{P}\ \subseteq\ \mathrm{NP}\ \subseteq\ \exists\R\ \subseteq\ \mathrm{PSPACE},
\]
where $\exists\R$ is the class of problems polynomial-time reducible to deciding the
truth of an existential first-order sentence over the reals. Integer- or
bounded-coordinate variants are already NP-hard \cite{Saxe}; the general realizability
sits at the $\exists\R$ level, a home for the \emph{universality} phenomena of Mnëv and
of Kapovich--Millson (arbitrary semialgebraic sets arise as realization spaces of
linkages) \cite{Mnev,KapovichMillson}, and realizations may require coordinates of
doubly-exponential bit-size \cite{GPS}. The smallest triangle-free $4$-chromatic UDG
gives a concrete, minimal witness: while \emph{verifying} a given realization is
$O(n^2)$ (the Gram--SVD certifier of Algorithm~\ref{alg:newton}), \emph{finding} one is
hard --- in a $9000$-start complex-Newton experiment on the $30$-variable system, the
degenerate configurations dominate and the faithful realization appears as an isolated
point (found $\sim$ once). The exotic $S_{20}$ field and the $\exists\R$ hardness are two
faces of the same isolation.

\section{Conclusion}
The smallest triangle-free $4$-chromatic unit-distance graph, far from being tame, has
the \emph{most generic} possible coordinate field: a degree-$20$ extension with Galois
group $S_{20}$, not solvable by radicals. Its faithful realization is nonetheless
Euclidean \emph{relative} to two free angles, via the transparent radical-axis
construction of \S\ref{sec:constr}; the whole obstruction is concentrated in those two
angles. Together with its origami neighbour $51376$ \cite{ei19}, the graph $51375$ shows
that the family of small extremal unit-distance graphs already realizes both ends of the
compass/origami/exotic hierarchy --- and, via \S\ref{sec:er}, that this arithmetic
hardness is the shadow of an $\exists\R$-complete search. Figure~\ref{fig:roadmap}
summarizes the whole path, from the object to its Galois group.

\begin{figure}[htbp]
\centering
\includegraphics[width=\textwidth]{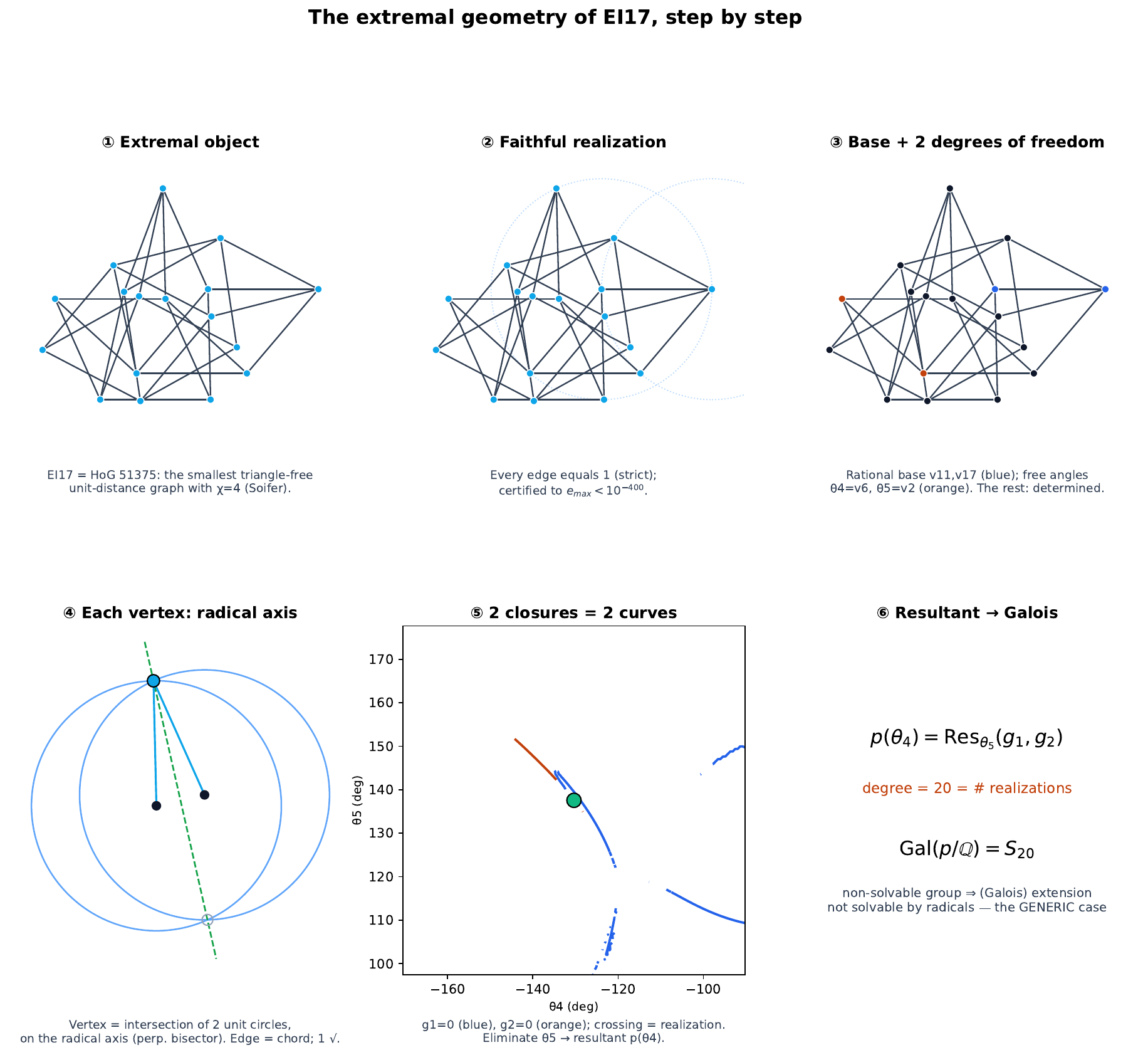}
\caption{The extremal geometry of $EI_{17}$, from the object to the Galois group:
\textcircled{1} the smallest triangle-free $\chi=4$ UDG; \textcircled{2} its faithful
realization; \textcircled{3} rational base and two free angles; \textcircled{4} each
vertex at the intersection of two unit circles (radical axis, Prop.~\ref{prop:radaxis});
\textcircled{5} the two closures as curves whose crossing is the realization;
\textcircled{6} the degree-$20$ resultant with Galois group $S_{20}$, non-solvable.}
\label{fig:roadmap}
\end{figure}

\clearpage
\section*{Acknowledgments}
The author thanks Prof.\ Dr.\ \textbf{Am\'erico Cunha da Silva~Jr.}\ (UERJ) for his
guidance on the origametry part of this work.

It is also a pleasure to thank \textbf{Edward Pegg~Jr.}\ (Wolfram Research), whose questions on
the coordinate fields --- and their degrees --- of faithful realizations of small Laman
and unit-distance graphs are the origin of this entire investigation. His encyclopedic
knowledge of the smallest extremal configurations, the precise problems he pointed the
author to, and his generous and patient correspondence shaped this work at every stage;
the one-angle, radical-axis viewpoint adopted here grew directly out of that exchange. The
author is deeply grateful for his encouragement and guidance. Remaining errors are the
author's own.

\appendix
\section{Coordinates of the certified realization}\label{ap:coords}
Table~\ref{tab:coords} lists \emph{our} certified coordinates of the faithful
realization (Figure~\ref{fig:real}) to twenty-four significant digits, in the gauge
$v_{17}=(0,0)$, $v_{11}=(-1,0)$. A faithful realization is produced \emph{ab initio} by
the continuity reduction of \S\ref{sec:constr} (Algorithm~\ref{alg:cont}: a systematic
sweep of sign branches with closure solving, from generic seeds --- \emph{no} external
input), and refined by Newton (Algorithm~\ref{alg:newton}) to $>2500$ digits. An
approximate embedding of this graph also appears on MathWorld / \textsc{House of
Graphs}~51375 (E.~Pegg~Jr.); our certified coordinates agree with it but \emph{do not
depend on it}. Since the coordinate field is a graph invariant (all faithful
realizations are $S_{20}$-conjugate, \S\ref{sec:results}), the degree-$20$ / $S_{20}$
result is independent of this choice; the strict Gram--SVD certificate (all $31$ edges
$=1$ to $<10^{-400}$, no non-edge at distance~$1$) is original to this work. Each
coordinate is algebraic of degree $20$ over $\Q$, a root of the polynomial of
Appendix~\ref{ap:poly} selected by these digits. Note the four horizontal unit edges
$v_{11}v_{17}$, $v_2v_{10}$, $v_3v_{12}$, $v_6v_{16}$ (difference $(1,0)$), the signature
of the unit rhombi (\S\ref{sec:constr}).

\begin{table}[htbp]
\centering\small
\caption{Our certified coordinates of the faithful realization ($24$ significant digits).}
\label{tab:coords}
\begin{tabular}{rll}
\toprule
$v$ & $x$ & $y$ \\
\midrule
$1$ & $-2.498806168621063073781172$ & $-0.549687325764897172971352$ \\
$2$ & $-2.385349777286040586649497$ & $-0.087042289116627046343162$ \\
$3$ & $-1.976688116468964503756166$ & $-0.999728241116672384265386$ \\
$4$ & $-1.855664803158276574864903$ & $\phantom{-}0.216060141766444058530457$ \\
$5$ & $-1.760489090853103061928075$ & $-0.023378927949119585938183$ \\
$6$ & $-1.647529953519117279782757$ & $-0.762039998487959826837054$ \\
$7$ & $-1.624860686432937524721422$ & $-0.063663361167507460404979$ \\
$8$ & $-1.612262559956085560912846$ & $-1.012332362413167299599543$ \\
$9$ & $-1.408661660817076082893331$ & $\phantom{-}0.912685952000045337922224$ \\
$10$ & $-1.385349777286040586649497$ & $-0.087042289116627046343162$ \\
$11$ & $-1$ & $\phantom{-}0$ \\
$12$ & $-0.976688116468964503756166$ & $-0.999728241116672384265386$ \\
$13$ & $-0.969121194493299061996577$ & $-0.246584894881826068097732$ \\
$14$ & $-0.886543608664977512868325$ & $\phantom{-}0.462645036648270126628190$ \\
$15$ & $-0.738317077767960011853096$ & $-0.526308397815777587033169$ \\
$16$ & $-0.647529953519117279782757$ & $-0.762039998487959826837054$ \\
$17$ & $\phantom{-}0$ & $\phantom{-}0$ \\
\bottomrule
\end{tabular}
\end{table}

\input{appendix_poly}

\input{appendix_construction}

\end{document}

%% file: appendix_poly.tex
\section{The degree-\texorpdfstring{$20$}{20} minimal polynomial}\label{ap:poly}
A primitive coordinate ($x_{v_1}$, gauge of Appendix~\ref{ap:coords}) is a root of the
irreducible, primitive integer polynomial $p(x)=\sum_{k=0}^{20}c_k\,x^k$ with the
coefficients below (leading $c_{20}\sim10^{97}$; found by \texttt{algdep} at $2600$ digits
and verified $|p(\alpha)|<10^{-1900}$). Its Galois group is $S_{20}$ (Lemma~\ref{lem:galois}).

{\footnotesize\begin{flushleft}
$c_{20}={}$\,\seqsplit{10846277150822505740639962570558213942763119983043461674845448343918212271056651745480014057747467}\par\smallskip
$c_{19}={}$\,\seqsplit{19180475212539477649458780252280766370785104165204168446975900051667169039146647895248310235378771}\par\smallskip
$c_{18}=-{}$\,\seqsplit{26070936888654617384259635465342351757876050482308610020563950635118095256174246419408065866234597}\par\smallskip
$c_{17}=-{}$\,\seqsplit{3962009254586100531018427473429740363965658407149928691646435897841056035927607923182216019350940}\par\smallskip
$c_{16}={}$\,\seqsplit{18166859496891545508566917411230114971902727740304713885900161305285905262350527321353616634759110}\par\smallskip
$c_{15}=-{}$\,\seqsplit{7607431700536231966080908557596836755687226593135326666832499865701187554080177241394973984057350}\par\smallskip
$c_{14}={}$\,\seqsplit{45351587309956624267644739061100632322504856432748648077436547828253855443955203136258124315668464}\par\smallskip
$c_{13}=-{}$\,\seqsplit{9408123675708568099217732631464064702259192795720169395209699138778275488455547776691534292051591}\par\smallskip
$c_{12}={}$\,\seqsplit{14682673355886588168945506517818998475845345062830936736552073931495949256607496993509908842889085}\par\smallskip
$c_{11}={}$\,\seqsplit{15143426019910260147689787064208205709627636321450044137594530613944920637911243716728461040656088}\par\smallskip
$c_{10}={}$\,\seqsplit{65968618518965545764431180875089192219762433065476009078893452297224262471375559826743563580939}\par\smallskip
$c_{9}={}$\,\seqsplit{6933435324110256159086227291350057737956789050172029740276872060533462997921572243016990516167855}\par\smallskip
$c_{8}=-{}$\,\seqsplit{9790517400576717511906222722622431562355763122979780641991852086843847078118813130413048942134112}\par\smallskip
$c_{7}={}$\,\seqsplit{6244397981079102575268653077249642024975028707259943515274717143990817911425984104440579854365874}\par\smallskip
$c_{6}=-{}$\,\seqsplit{23586454334648278938656158865770452705897027029874210958571764632770769116482786067681998817533516}\par\smallskip
$c_{5}=-{}$\,\seqsplit{53392439028840081073638800831628946914310140652692035611147137997008514386412828676786323411211286}\par\smallskip
$c_{4}=-{}$\,\seqsplit{26449839798917337714527054796537315591594669043182637188470470315247453343648348898405567316167209}\par\smallskip
$c_{3}=-{}$\,\seqsplit{67496405170016508230749607293466456667836751966863744185909998225730057226496683094528284790742964}\par\smallskip
$c_{2}={}$\,\seqsplit{29801792463858164957001377196524717774830567182899091859788291738590738838397086829441065048179864}\par\smallskip
$c_{1}={}$\,\seqsplit{0}\par\smallskip
$c_{0}=-{}$\,\seqsplit{1312711393788876800678365780809286777142912632161468883999549147019329458866534946330375157169324}\par\smallskip
\end{flushleft}}

%% file: appendix_construction.tex
\section{The geometric construction of \texorpdfstring{$EI_{19}$}{EI19},
from base to cap: a companion walkthrough}\label{ap:construction}

This appendix is a self-contained, illustrated walkthrough of the geometric
construction certified in \S\ref{sec:ei19}: rational gauge, unit rhombus,
quilt of rhombi and the whole base in $\Q(\sqrt2,\sqrt5,\sqrt7)$; then the
cap, the free angle and the closure edge; the full degree-2 system with the
radical axes as lines; where the cubic is born in the elimination; Cardano's
formula, the \emph{casus irreducibilis} and the field $K_6$; the
Huzita--Hatori axioms, Lill's method and trisection; the Henneberg
decomposition of every step; and the orientation signs behind the difficulty
of the certificate. \subsection{The problem}
A \emph{unit-distance graph} (UDG) in the plane is a graph $G=(V,E)$ with
$p:V\to\R^2$ such that $\lVert p(u)-p(v)\rVert=1$ exactly when $uv\in E$.
The graph $EI_{19}$ has $19$ vertices and $35=2\cdot19-3$ edges: it is
\emph{isostatic} (minimally rigid). Once a rational frame is fixed, its
vertices are an \emph{isolated solution} of a polynomial system with rational
coefficients; hence their coordinates are \emph{algebraic} (Maehara's
theorem). The question: \emph{in which number field do they live?} The answer,
for the cap, is a field of degree $12$, \emph{origami}-constructible.

\subsubsection{Maehara's theorem: rigid distances are algebraic}
Why are the coordinates \emph{algebraic}? By \textbf{Maehara's theorem}: the
set $\Gamma$ of distances that occur between two vertices of some \emph{rigid}
UDG in the plane is exactly the set $A^{+}$ of positive algebraic numbers,
$\Gamma=A^{+}$. The direction we use is $\Gamma\subseteq A^{+}$: once a
rational gauge is fixed, a rigid graph has its vertices as an \emph{isolated
solution} of a polynomial system with rational coefficients; hence its
coordinates --- and all mutual distances --- are \emph{algebraic}. This is
what makes ``the coordinate field'' $K$ a genuine \emph{number field}. The
converse $A^{+}\subseteq\Gamma$ is constructive (Maehara realizes any
algebraic number by assembling Kempe linkages). The present work solves the
\emph{exact} converse for one fixed graph: \emph{which} algebraic numbers are
the coordinates of $EI_{19}$ --- and the answer is the degree-$12$ field,
origami.

\subsubsection{Triangle-free}
$EI_{19}$ is \textbf{triangle-free} (girth $4$): no three vertices are
pairwise at distance $1$. This places it at the frontier of the
\emph{Hadwiger--Nelson} problem (the chromatic number of the plane,
$\chi(\R^2)\in\{5,6,7\}$): the Exoo--Ismailescu family of \emph{triangle-free}
UDGs with chromatic number $4$ is what drives that problem's lower bounds.
Being triangle-free changes the arithmetic of the construction: the rigid
pieces are not equilateral triangles (which would give $\sqrt3$ at once) but
\emph{rhombi} ($4$-cycles); so the construction proceeds by \emph{radical
axes} and \emph{free angles} --- leaving the field ``open'' until the closure,
which is exactly where the high-degree arithmetic enters.

\subsubsection{Complexity: NP, ETR and PSPACE --- and the certificate}
Deciding whether a graph admits a faithful (or isostatic) UDG realization is
\emph{$\exists\R$-complete} (Schaefer). The chain of classes is
\[
\mathrm{P}\ \subseteq\ \mathrm{NP}\ \subseteq\ \exists\R\ \subseteq\ \mathrm{PSPACE},
\]
where $\exists\R$ (the level of the \emph{existential theory of the reals},
ETR) is the class of problems that are polynomial-time reducible to deciding the truth
of an existential first-order sentence over $\R$; by Tarski--Seidenberg and
Canny, $\mathrm{ETR}\in\mathrm{PSPACE}$. Realizations may require coordinates
of \emph{doubly-exponential} bit-size. In this setting the present work is the
\textbf{certificate}: \emph{verifying} a given realization is easy ($O(n^2)$,
the Gram--SVD certifier), but \emph{finding} and \emph{exactly certifying} the
coordinate field is the hard side --- and that is what we do, with the exact
minimal polynomial and the Frobenius census. The geometric construction below
is the ``visible'' face of that certificate.

\subsection{The base: gauge, rhombus and quilt}

\subsubsection{Gauge and the rhombus}
Fix the gauge $v_1=(0,0)$, $v_4=(1,0)$. The first two vertices, $v_5$ and
$v_2$, form with $v_1,v_4$ a \textbf{unit rhombus} (a side-$1$
parallelogram): once $v_5=(\cos\theta,\sin\theta)$ is chosen on the circle
about $v_1$, the vertex $v_2$ follows \emph{for free} by translation,
$v_2=v_5+(v_4-v_1)$ (Figure~\ref{fig:losango}).

\begin{figure}[htbp]
\centering
\includegraphics[width=.86\textwidth]{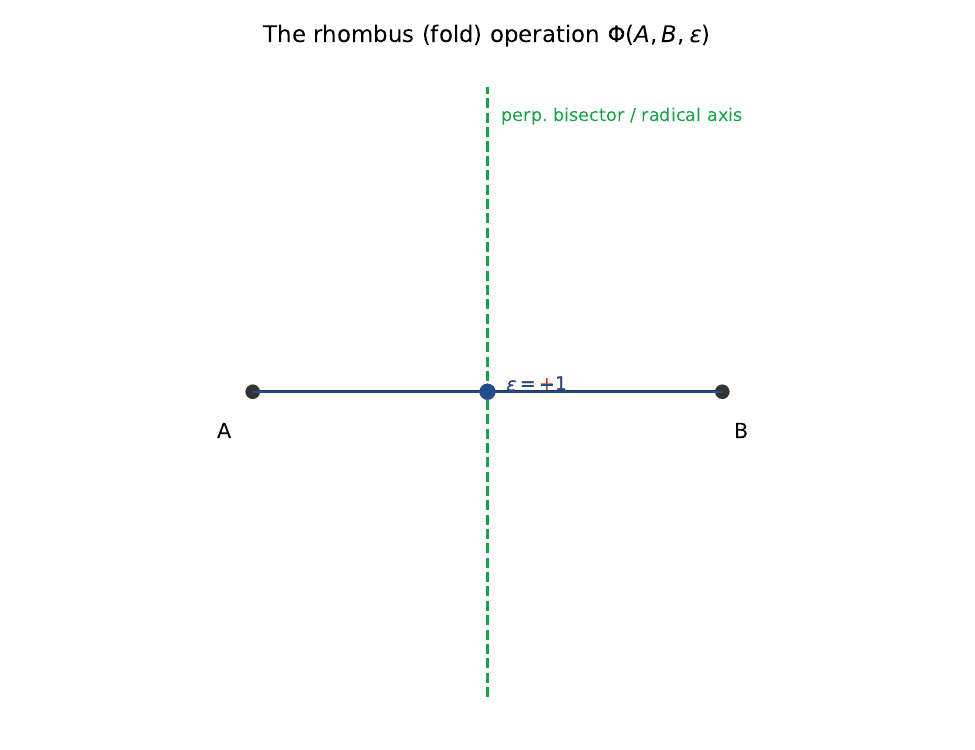}
\caption{The rhombus as a vector operation: the parallelogram's diagonals
bisect each other ($v_1+v_2=v_5+v_4$), hence $v_2=v_5+v_4-v_1$. This is the law
$4\cdot1^2=d_1^2+d_2^2$ (squares of the sides $=$ squares of the diagonals).}
\label{fig:losango}
\end{figure}

\subsubsection{The base as a quilt of 9 rhombi}
The whole base ($14$ vertices) is an \textbf{interlocked quilt of $9$ unit
rhombi} (Figure~\ref{fig:colcha}). Each rhombus satisfies $d_1^2+d_2^2=4$;
where three of its vertices are already placed, the fourth follows by vector
addition. Where no rhombus is available, the vertex follows by \emph{radical
axis} (intersection of two unit circles).

\begin{figure}[htbp]
\centering
\includegraphics[width=.66\textwidth]{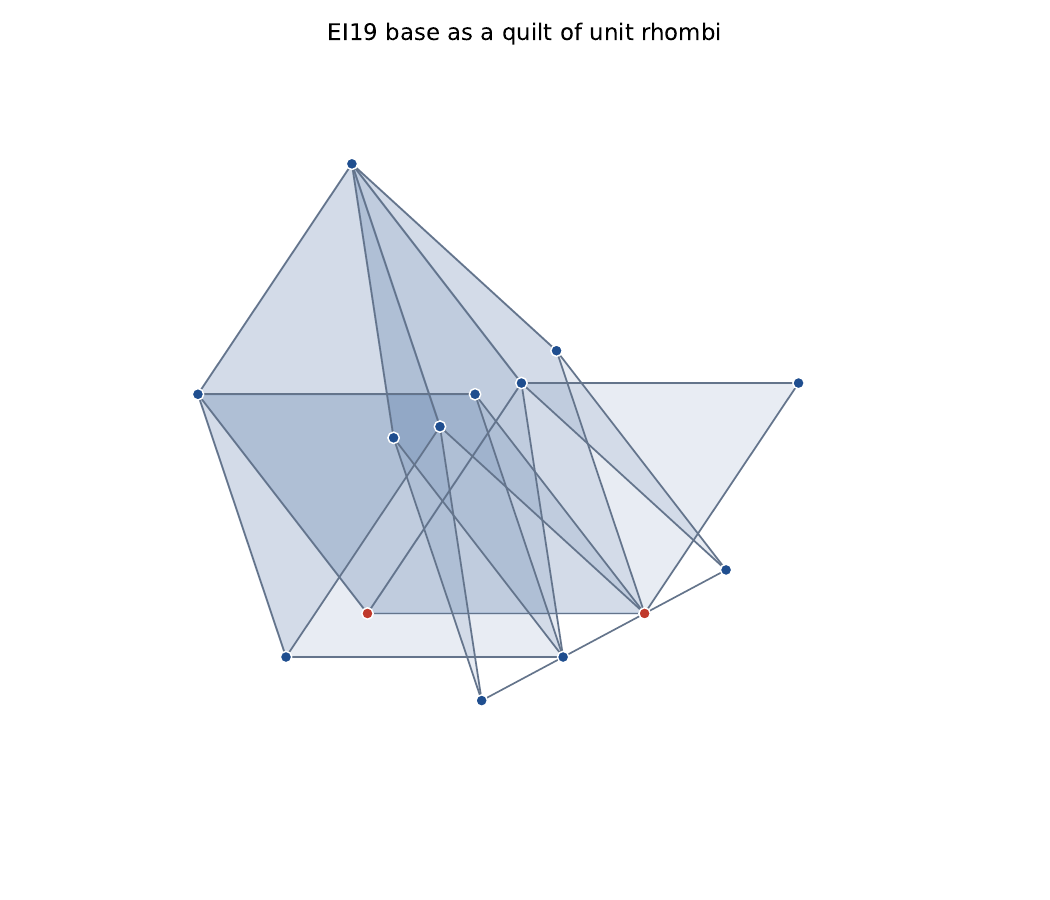}
\caption{The $9$ rhombi $L1$--$L9$ of the base. They share the ``spine''
$v_{10},v_{13}$; this is why the base needs \emph{two} free angles, locked by
closures.}
\label{fig:colcha}
\end{figure}

\subsubsection{Vector construction and exact coordinates}
The base is built in $14$ steps (Figure~\ref{fig:passos}), reproducing the
exact coordinates to machine precision ($\sim10^{-16}$). They all lie in
$K_0:=\Q(\sqrt2,\sqrt5,\sqrt7)$ (degree $8$ over $\Q$), the
\emph{ruler-and-compass} tier (Figure~\ref{fig:angcoord}).

\begin{figure}[htbp]
\centering
\includegraphics[width=\textwidth]{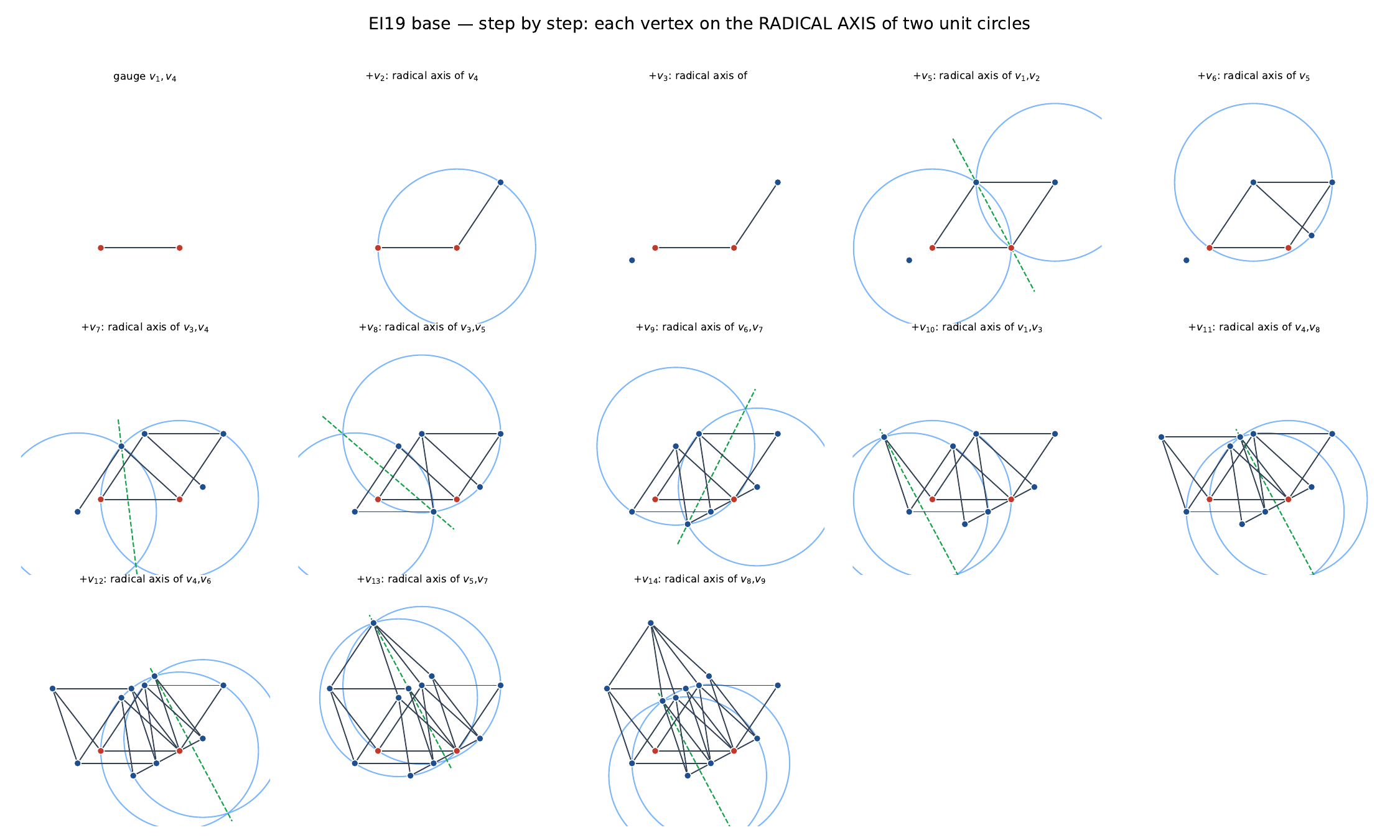}
\caption{Step-by-step vector construction of the base ($12$ panels): green
circle $=$ free angle; orange rhombus $=$ vector sum; two circles $+$ line $=$
radical axis.}
\label{fig:passos}
\end{figure}

\begin{figure}[htbp]
\centering
\includegraphics[width=.92\textwidth]{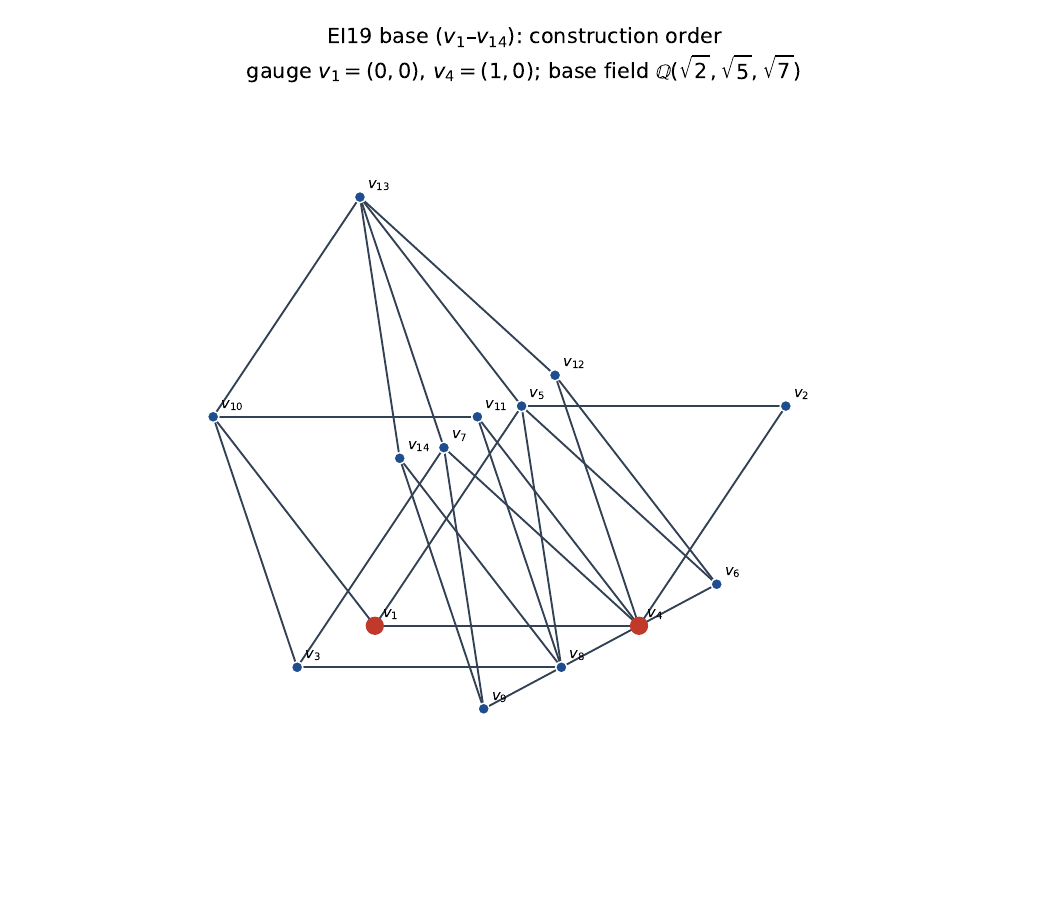}
\caption{Construction-order map of the base: $2$ gauge ($v_1,v_4$) $+$ $2$
free angles ($v_5,v_6$) $+$ $10$ vertices by radical axis. Arrows go from the
two ``parents'' (circle centres) to the child.}
\label{fig:ordem}
\end{figure}

\begin{figure}[htbp]
\centering
\includegraphics[width=\textwidth]{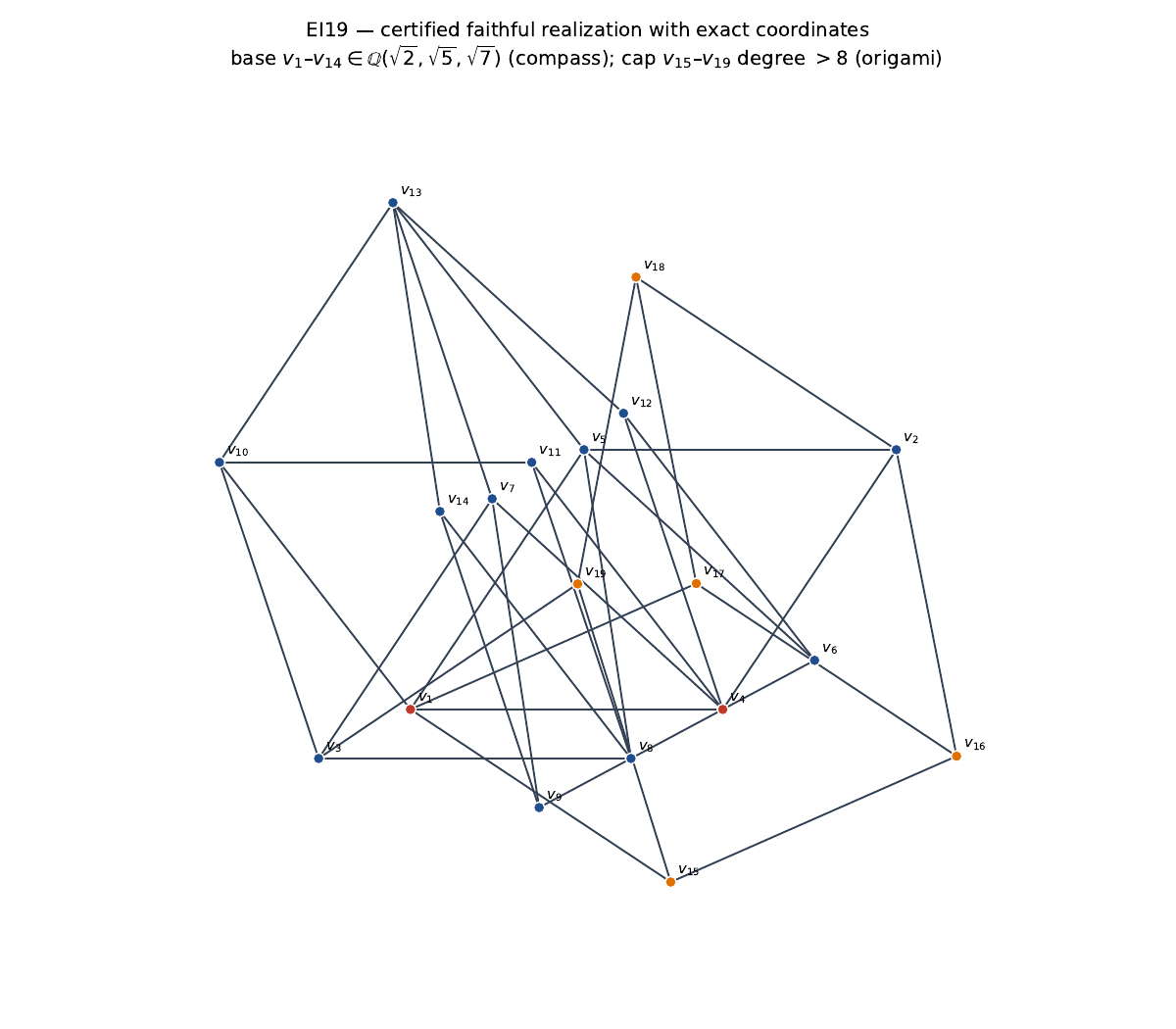}
\caption{Two steps with \emph{exact} coordinates (verified in SymPy): $v_7$
and $v_2$ by radical axis, with $M$, $L^2$ and $h$ explicit.}
\label{fig:exatas}
\end{figure}

\begin{figure}[htbp]
\centering
\includegraphics[width=\textwidth]{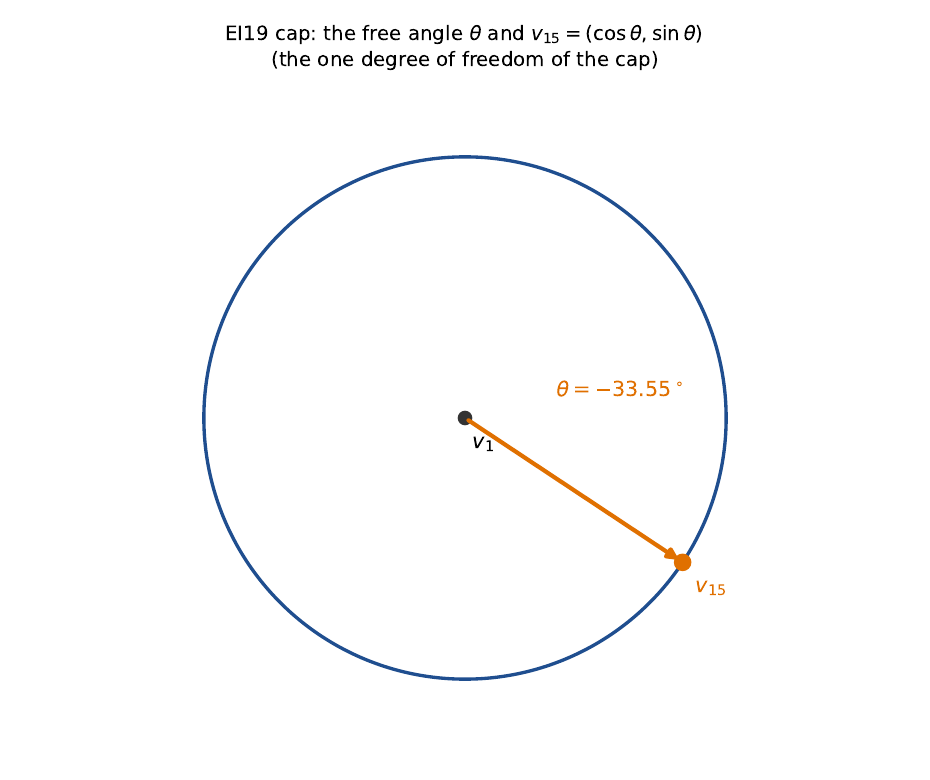}
\caption{Free angle $\theta=\arccos\tfrac59\approx56.25^\circ$ and the exact
coordinates of the $14$ base vertices, all in $\Q(\sqrt2,\sqrt5,\sqrt7)$.}
\label{fig:angcoord}
\end{figure}

The $14$ exact base coordinates (constants; they do \emph{not} depend on the
cap angle $\theta$) are:
\begin{equation}\label{eq:basecoords}
\renewcommand{\arraystretch}{1.35}
\begin{array}{llll}
\toprule
v & x_v & y_v & \text{field}\\ \midrule
v_1 & 0 & 0 & \Q\\
v_2 & \tfrac{14}{9} & \tfrac{2}{9}\sqrt{14} & \Q(\sqrt{14})\\
v_3 & -\tfrac{1}{9}\sqrt7 & -\tfrac{1}{9}\sqrt2 & \Q(\sqrt2,\sqrt7)\\
v_4 & 1 & 0 & \Q\\
v_5 & \tfrac{5}{9} & \tfrac{2}{9}\sqrt{14} & \Q(\sqrt{14})\\
v_6 & 1+\tfrac{1}{9}\sqrt7 & \tfrac{1}{9}\sqrt2 & \Q(\sqrt2,\sqrt7)\\
v_7 & \tfrac{5}{9}-\tfrac{1}{9}\sqrt7 & -\tfrac{1}{9}\sqrt2+\tfrac{2}{9}\sqrt{14} & \Q(\sqrt2,\sqrt7)\\
v_8 & 1-\tfrac{1}{9}\sqrt7 & -\tfrac{1}{9}\sqrt2 & \Q(\sqrt2,\sqrt7)\\
v_9 & 1-\tfrac{2}{9}\sqrt7 & -\tfrac{2}{9}\sqrt2 & \Q(\sqrt2,\sqrt7)\\
v_{10} & -\tfrac{1}{18}\sqrt7-\tfrac{1}{18}\sqrt{70} & -\tfrac{1}{18}\sqrt2+\tfrac{7}{18}\sqrt5 & \Q(\sqrt2,\sqrt5,\sqrt7)\\
v_{11} & 1-\tfrac{1}{18}\sqrt7-\tfrac{1}{18}\sqrt{70} & -\tfrac{1}{18}\sqrt2+\tfrac{7}{18}\sqrt5 & \Q(\sqrt2,\sqrt5,\sqrt7)\\
v_{12} & 1+\tfrac{1}{18}\sqrt7-\tfrac{1}{18}\sqrt{70} & \tfrac{1}{18}\sqrt2+\tfrac{7}{18}\sqrt5 & \Q(\sqrt2,\sqrt5,\sqrt7)\\
v_{13} & \tfrac{5}{9}-\tfrac{1}{18}\sqrt7-\tfrac{1}{18}\sqrt{70} & -\tfrac{1}{18}\sqrt2+\tfrac{7}{18}\sqrt5+\tfrac{2}{9}\sqrt{14} & \Q(\sqrt2,\sqrt5,\sqrt7)\\
v_{14} & 1-\tfrac{1}{6}\sqrt7-\tfrac{1}{18}\sqrt{70} & -\tfrac{1}{6}\sqrt2+\tfrac{7}{18}\sqrt5 & \Q(\sqrt2,\sqrt5,\sqrt7)\\
\bottomrule
\end{array}
\end{equation}

\subsection{From base to top: where the cap enters}
The cap $v_{15},\dots,v_{19}$ \textbf{hangs} from the base at three anchors,
$v_1,v_2,v_3$ (Figure~\ref{fig:ponte}). Everything in it is a function of a
\emph{single} free angle $\theta$ (the position of $v_{15}$ on the circle about
$v_1$).

\begin{figure}[htbp]
\centering
\includegraphics[width=.72\textwidth]{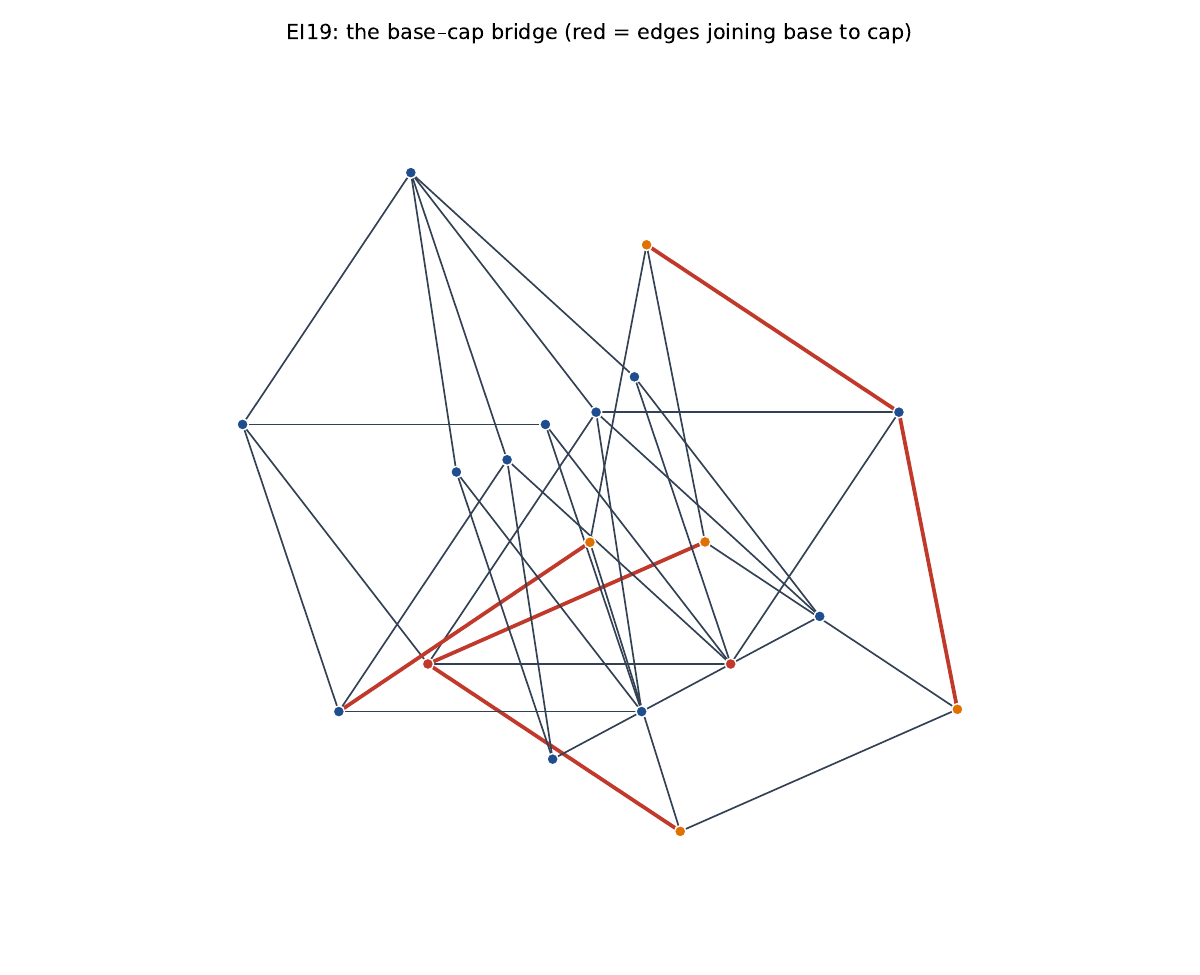}
\caption{The base~$\to$~cap bridge. The anchors $v_1,v_2,v_3$ (red) hold the
pentagon $v_{15}\cdots v_{19}$ (orange); the dashed edge $(18,19)$ is the
\emph{closure}.}
\label{fig:ponte}
\end{figure}

\subsection{Where the closure comes from}
As $v_{15}$ swings on the circle about $v_1$, a \emph{long arm}
$v_{16}\!\to\!v_{17}\!\to\!v_{18}$ and a \emph{short arm} $v_{19}$ swing
together. Writing the \emph{fold} (intersection of two unit circles) as
\[
\Phi_\varepsilon(A,B)=\frac{A+B}{2}+\varepsilon\sqrt{1-\tfrac14\lVert A-B\rVert^2}\,
\frac{R(B-A)}{\lVert B-A\rVert},\qquad R(x,y)=(-y,x),
\]
the five cap vertices become explicit functions of $\theta$:
\begin{equation}\label{eq:capfuncao}
\begin{aligned}
v_{15}(\theta)&=(\cos\theta,\ \sin\theta),\\
v_{16}(\theta)&=\Phi_{+}(v_2,v_{15}), & v_{17}(\theta)&=\Phi_{+}(v_1,v_{16}),\\
v_{18}(\theta)&=\Phi_{-}(v_2,v_{17}), & v_{19}(\theta)&=\Phi_{+}(v_3,v_{15}),
\end{aligned}
\end{equation}
with $v_1,v_2,v_3$ from table \eqref{eq:basecoords}. The \textbf{only remaining
edge}, $(18,19)$, equals $1$ only for special values of $\theta$: the
\textbf{closure} is the equation
\begin{equation}\label{eq:fechamento}
g(\theta)=\bigl\lVert v_{18}(\theta)-v_{19}(\theta)\bigr\rVert-1=0 .
\end{equation}
It is a mechanism we rotate until the two ends meet (Figure~\ref{fig:gtheta});
the geometric root is $\theta^*=-0.5856\ \text{rad}\ (=-33.5528^\circ)$.

\begin{observacao}[Why $\theta^*$ is negative]
The sign is a choice of \emph{orientation}. We measure $\theta$ from the
positive $x$-axis, counterclockwise. In the certified realization
$v_{15}=(\cos\theta^*,\sin\theta^*)=(0.8334,\,-0.5527)$ lies \emph{below} the
axis $v_1v_4$, i.e.\ $\sin\theta^*<0$, hence $\theta^*<0$ (a clockwise angle;
equivalently $\theta^*+2\pi\approx5.698$ rad). The cap folds into the lower
half-plane. The reflection $\theta\mapsto-\theta$ gives the \emph{mirror}
realization; and since the coordinate field depends on $\cos\theta$, which is
\emph{even} ($\cos(-\theta)=\cos\theta$), the degree-$12$ polynomial is the
\textbf{same} for both signs --- the sign fixes only the side, not the
arithmetic.
\end{observacao}

\begin{figure}[htbp]
\centering
\includegraphics[width=.8\textwidth]{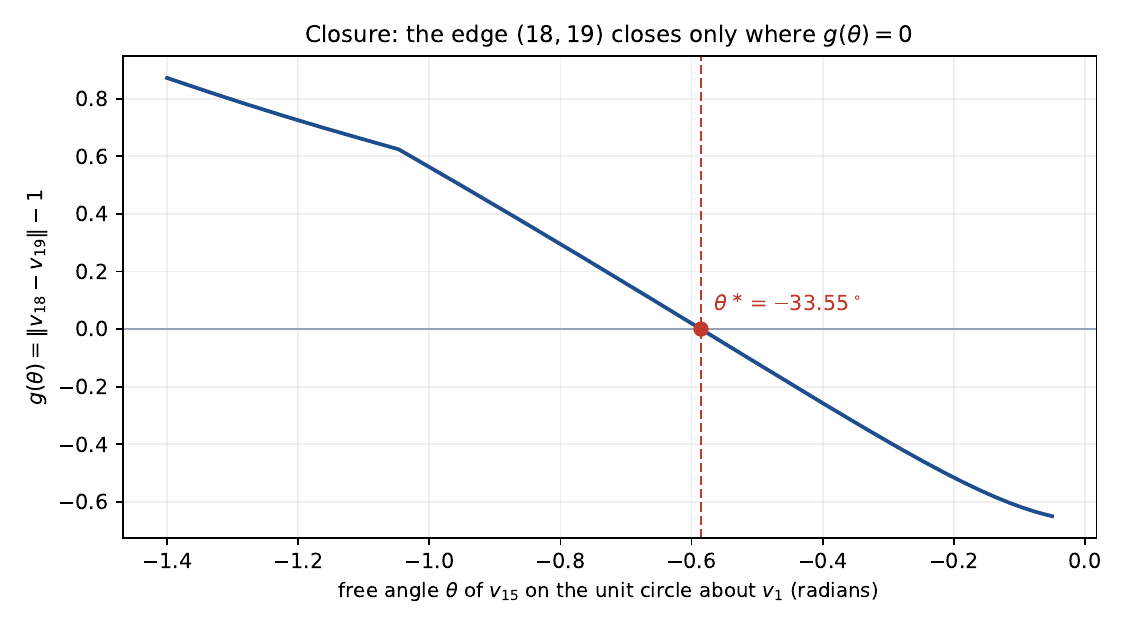}
\caption{The closure function $g(\theta)$ (horizontal axis in radians). The cap
``closes'' where $g(\theta)=0$; this is the \emph{only} condition that pins
$\theta$.}
\label{fig:gtheta}
\end{figure}

\begin{figure}[htbp]
\centering
\includegraphics[width=\textwidth]{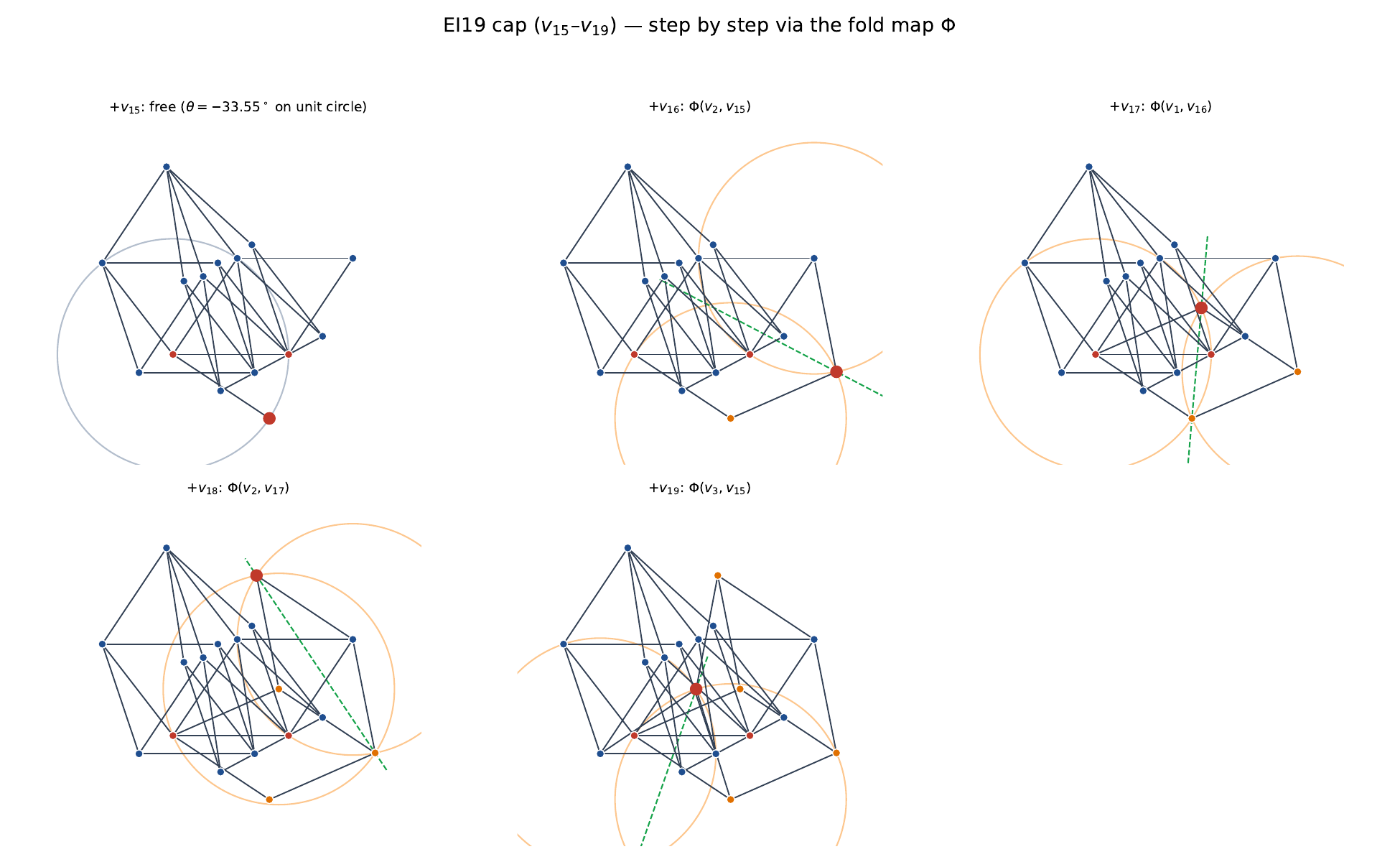}
\caption{Step-by-step vector construction of the \emph{cap}: $v_{15}$ (free
angle, degree $12$), $v_{16},v_{17},v_{18}$ (radical axis, H1), and $v_{19}$
(radical axis $+$ \textbf{closure} $(18,19)$, H2); finally the complete
$EI_{19}$.}
\label{fig:cappassos}
\end{figure}

\subsection{The orientation signs and the difficulty of the certificate}
\label{sec:sinais}

\subsubsection{What the sign $\varepsilon$ means}
Since $R$ rotates $90^\circ$ counterclockwise, $+\hat n$ points to the
\emph{left} of the direction $A\to B$. So the sign selects the side of the
perpendicular bisector:
\[
\varepsilon=+1\ \Longleftrightarrow\ \text{vertex to the \textbf{left} of }A\to B,\qquad
\varepsilon=-1\ \Longleftrightarrow\ \text{to the \textbf{right}}
\]
(Figure~\ref{fig:ramos}). It is the \emph{chirality} of each fold --- which of
the two circle intersections is taken. In the certified realization the
pattern is $(\,v_{16},v_{17},v_{18},v_{19}\,)=(+,+,-,+)$: three folds
``outward'' (left) and one reversal at $v_{18}$ (right), which folds the long
arm back to close $(18,19)$.

\begin{figure}[htbp]
\centering
\includegraphics[width=.62\textwidth]{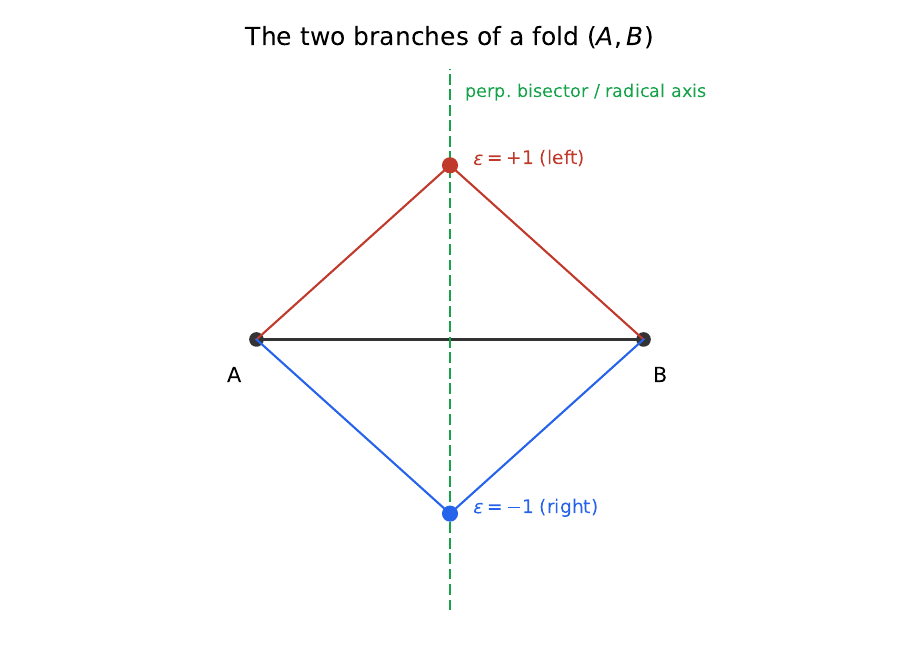}
\caption{The two branches of a fold $\Phi_\varepsilon(A,B)$: the unit circles
about $A$ and $B$ meet at $\Phi_{+}$ (left) and $\Phi_{-}$ (right), on the
perpendicular bisector, at a height $h=\sqrt{1-\tfrac14|A-B|^2}$ from $M$.}
\label{fig:ramos}
\end{figure}

\subsubsection{Why this makes the certificate hard}
The sign does \emph{not} decide whether the cap closes: a sweep shows that
\textbf{all $16$ sign combinations admit a real closure}. Each is a distinct
\emph{assembly mode} of the same mechanism (like a four-bar linkage
``elbow-up/elbow-down''). Two sources of difficulty emerge.

\smallskip\noindent\textbf{(i) Combinatorial explosion and degeneracy.}
A graph with $k$ vertices built by circle intersection has $2^{k}$ sign
patterns. For the cap ($k=4$) this is already $16$ modes; for the whole
$EI_{19}$, $2^{14}$. Most produce \emph{degenerate} or \emph{non-faithful}
configurations (coincident vertices or spurious unit distances). In practice a
$9000$-restart Newton search finds the faithful realization only $\sim 1$
time: the degenerate modes dominate. \emph{Finding} the correct branch is the
$\exists\R$-hard side (\S1.2).

\smallskip\noindent\textbf{(ii) The field is the invariant --- and needs
precision.} We confirmed that the degree-$12$ polynomial has \textbf{eight real
roots}
\[
\cos\theta\in\{0.0034,\ 0.1097,\ 0.8334,\ 0.8376,\ 0.8664,\ 0.8874,\ 0.8966,\ 0.9196\},
\]
and that \emph{several} faithful sign modes land on \emph{distinct} roots
(e.g.\ $0.8334$ --- the certified one ---, $0.8376$, $0.9196$). All are
\textbf{Galois conjugates} of the \emph{same} irreducible degree-$12$
polynomial: the coordinate field is a \emph{graph invariant}, independent of
which faithful realization the search returns. But exhibiting that invariant is
the \emph{arithmetic} side of the difficulty: the polynomial has height
$\sim 10^{10}$, and low precision returns \emph{spurious polynomials} (the
degree-$10$ pitfall); only with $\gtrsim 2600$ digits and \textsc{pslq} does
the true degree $12$ appear and persist. Verifying a realization is $O(n^2)$;
\emph{certifying the field} is the work.

\subsection{Henneberg construction: H1, H2 and the arithmetic difficulty}
\label{sec:henneberg}
Since $EI_{19}$ is isostatic ($35=2\cdot19-3$), it is a \emph{Laman graph} and
admits a \textbf{Henneberg construction}: starting from an edge ($K_2$), one
repeatedly applies one of two moves,
\begin{itemize}
\item \textbf{H1} (vertex addition): adds a \emph{degree-2} vertex, joined to
$2$ existing vertices;
\item \textbf{H2} (edge split): adds a \emph{degree-3} vertex, ``splitting'' an
existing edge.
\end{itemize}
Reducing $EI_{19}$ by the reverse moves brings it to $K_2$ with \textbf{$7$ H1
moves} and \textbf{$10$ H2 moves}, confirming (again) that it is Laman.

\subsubsection{The geometric correspondence is exact}
\[
\text{H1}\ \Longleftrightarrow\ \text{intersect two unit circles (radical axis)},
\qquad
\text{H2}\ \Longleftrightarrow\ \text{free angle } + \text{ closure}.
\]
In H1 the new vertex already has $2$ placed neighbours (two centres, one line
and one circle). In H2 it enters with a \emph{degree of freedom} (the free
angle) and a \emph{closure} edge locks it --- exactly the \emph{edge split}.
Table~\ref{tab:henneberg} classifies \textbf{each of the $19$ steps} in
construction order.

\begin{table}[htbp]
\centering\small\renewcommand{\arraystretch}{1.12}
\caption{Every step of the construction of $EI_{19}$ as Henneberg type 1 or
type 2. Degree $=$ number of already-placed neighbours. Free angles
($v_5,v_6,v_{15}$) open an H2 that a later closure completes.}
\label{tab:henneberg}
\begin{tabular}{rclcc}
\toprule
\# & $v$ & method & degree & Henneberg \\ \midrule
1 & $v_1$ & gauge & $0$ & --- \\
2 & $v_4$ & gauge & $1$ & --- \\
3 & $v_5$ & free angle $\theta_1$ & $1$ & opens H2 \\
4 & $v_2$ & radical axis $(v_4,v_5)$ & $2$ & H1 \\
5 & $v_6$ & free angle $\theta_2$ & $1$ & opens H2 \\
6 & $v_{12}$ & radical axis $(v_4,v_6)$ & $2$ & H1 \\
7 & $v_{13}$ & radical axis $(v_5,v_{12})$ & $2$ & H1 \\
8 & $v_7$ & radical axis $(v_4,v_{13})$ & $2$ & H1 \\
9 & $v_{10}$ & radical axis $(v_1,v_{13})$ & $2$ & H1 \\
10 & $v_3$ & radical axis $(v_7,v_{10})$ & $2$ & H1 \\
11 & $v_9$ & radical axis $(v_6,v_7)$ & $2$ & H1 \\
12 & $v_{11}$ & radical axis $(v_4,v_{10})$ & $2$ & H1 \\
13 & $v_8$ & radical axis $(v_3,v_5,v_{11})$ & $3$ & \textbf{H2} (closes $\theta_1$) \\
14 & $v_{14}$ & radical axis $(v_8,v_9,v_{13})$ & $3$ & \textbf{H2} (closes $\theta_2$) \\
15 & $v_{15}$ & free angle $\theta$ (cap) & $1$ & opens H2 \\
16 & $v_{16}$ & radical axis $(v_2,v_{15})$ & $2$ & H1 \\
17 & $v_{17}$ & radical axis $(v_1,v_{16})$ & $2$ & H1 \\
18 & $v_{18}$ & radical axis $(v_2,v_{17})$ & $2$ & H1 \\
19 & $v_{19}$ & radical axis $+$ closure $(18,19)$ & $3$ & \textbf{H2} (cap, degree 12) \\
\bottomrule
\end{tabular}
\end{table}

In total: $2$ gauge, $3$ free angles ($v_5,v_6,v_{15}$), $11$ \textbf{H1}
moves and $3$ \textbf{H2} closures ($v_8,v_{14},v_{19}$). The two base H2
closures resolve in square roots; the third (cap, $v_{19}$) is the degree-$12$
one --- origami.

\subsubsection{Henneberg-2 is where the Galois degree is decided}
This is the link to the arithmetic (the \emph{Laman-number conjecture}): the
\emph{tier} of the graph is read off the \textbf{Henneberg-2 closures}. Each
H2 is a closure; it is the degree (and Galois group) of that closure that fixes
the tier. In the base, the H2's resolve in square roots
($\Q(\sqrt2,\sqrt5,\sqrt7)$, compass). In the cap, the \emph{single} hard H2
--- the edge $(18,19)$ --- has degree $12=2^2\cdot3$: \emph{origami}. In
$EI_{17}$, the analogous H2 has a degree-$20$ resultant with group $S_{20}$:
\emph{exotic}, non-solvable. That is, \textbf{Henneberg-2 is literally where the
arithmetic difficulty lives}.

\subsection{All the preliminary equations (degree 2)}
With $v_1=(0,0)$, $v_2=(\tfrac{14}{9},\tfrac{2\sqrt{14}}{9})$,
$v_3=(-\tfrac{\sqrt7}{9},-\tfrac{\sqrt2}{9})$ fixed and unknowns
$c,s$ and $(x_{16},y_{16}),\dots,(x_{19},y_{19})$:
\begin{align}
\tag{$E_0$}& c^2+s^2-1=0 &&(v_{15}\text{ on the circle about }v_1)\\
\tag{$E_1$}& x_{16}^2-\tfrac{28}{9}x_{16}+y_{16}^2-\tfrac{4\sqrt{14}}{9}y_{16}+\tfrac{19}{9}=0\\
\tag{$E_2$}& x_{16}^2+y_{16}^2-2c\,x_{16}-2s\,y_{16}=0\\
\tag{$E_3$}& x_{17}^2+y_{17}^2-1=0\\
\tag{$E_4$}& x_{17}^2+y_{17}^2-2x_{16}x_{17}-2y_{16}y_{17}+x_{16}^2+y_{16}^2-1=0\\
\tag{$E_5$}& x_{18}^2-\tfrac{28}{9}x_{18}+y_{18}^2-\tfrac{4\sqrt{14}}{9}y_{18}+\tfrac{19}{9}=0\\
\tag{$E_6$}& x_{18}^2+y_{18}^2-2x_{17}x_{18}-2y_{17}y_{18}+x_{17}^2+y_{17}^2-1=0\\
\tag{$E_7$}& x_{19}^2+\tfrac{2\sqrt7}{9}x_{19}+y_{19}^2+\tfrac{2\sqrt2}{9}y_{19}-\tfrac{8}{9}=0\\
\tag{$E_8$}& x_{19}^2+y_{19}^2-2c\,x_{19}-2s\,y_{19}=0\\
\tag{$E_9$}& (x_{18}-x_{19})^2+(y_{18}-y_{19})^2-1=0 &&(\textbf{closure})
\end{align}

\begin{observacao}[The radical axis is a \emph{line}]
Subtracting the two circles of each vertex, the term $x^2+y^2$ \emph{cancels}
and a \textbf{line} remains (the perpendicular bisector of the centres, since
the radii are equal):
\begin{align*}
R_{16}=E_1-E_2:\ & \bigl(2c-\tfrac{28}{9}\bigr)x_{16}+\bigl(2s-\tfrac{4\sqrt{14}}{9}\bigr)y_{16}+\tfrac{19}{9}=0,\\
R_{17}=E_3-E_4:\ & 2x_{16}x_{17}+2y_{16}y_{17}-(x_{16}^2+y_{16}^2)=0,\\
R_{18}=E_5-E_6:\ & \bigl(2x_{17}-\tfrac{28}{9}\bigr)x_{18}+\bigl(2y_{17}-\tfrac{4\sqrt{14}}{9}\bigr)y_{18}+\tfrac{19}{9}=0,\\
R_{19}=E_7-E_8:\ & \bigl(2c+\tfrac{2\sqrt7}{9}\bigr)x_{19}+\bigl(2s+\tfrac{2\sqrt2}{9}\bigr)y_{19}-\tfrac{8}{9}=0.
\end{align*}
Each vertex is thus \textbf{line $\cap$ circle} $=$ two points: one
\emph{square root} per vertex.
\end{observacao}

\subsection{Where the cubic appears: at the closure}

\subsubsection{The only geometric locus is the circle}
Every unit-distance equation is a \textbf{circle} --- the locus of points at
distance $1$ from a centre. It is the \emph{only} conic that appears, and it
has degree~$2$. The line$\cap$circle construction gives, at each step, a
square root; the chain $v_{16}\!\to\!v_{17}\!\to\!v_{18}$ and $v_{19}$ is
merely a \emph{tower of square roots} over $K_0(c)$, and \emph{no} step
constrains $c$.

\subsubsection{The closure gives the degree $12$ \emph{directly} (irreducible)}
The \textbf{first} --- and only --- equation that pins $c$ is the closure
$E_9$. Eliminating the coordinates and clearing the branch roots, it produces
\emph{directly} the minimal polynomial $p(c)$, which is \textbf{irreducible of
degree $12=2^2\cdot3$} over $\Q$ (verified in SymPy). That is: $\cos\theta$ has
\emph{genuine} degree $12$ --- it is \textbf{not} a mere factorization, since
$p(c)$ does \emph{not} split over $\Q$.

\begin{observacao}[The only factorization is incidental]
When \emph{squaring} to eliminate the $\pm$ branch signs of the radical axes,
\emph{spurious} factors appear, corresponding to the \emph{other} branch
choices (other realizations and extraneous roots of the squaring). The
relevant factor --- the coordinate field of the chosen realization --- is the
irreducible degree-$12$ one. That factorization only \emph{selects the correct
branch}; it does not reduce the degree $12$.
\end{observacao}

\subsubsection{Where the ``$3$'' is: a subfield, not a factor}
Since $12$ is \emph{not} a power of $2$, the field $\Q(\cos\theta)$ has a tower
of subfields
\[
\Q\ \subset\ \Q(\sqrt7)\ \subset\ K_6\ \subset\ \Q(\cos\theta),
\qquad\text{of degrees } 2,\ 3,\ 2
\]
(Figure~\ref{fig:torre}). The ``$3$'' is \textbf{not} a factor of the
polynomial (which is irreducible): it is the \emph{cubic layer} of that tower,
generated by Cardano's cubic $\beta$ (irreducible over $\Q(\sqrt7)$). It is in
this precise sense that ``the degree-$3$ equation appears at the closure'':
the closure creates the degree-$12$ field, and the fact $3\mid12$ manifests as
this intermediate cubic step. The Galois group is $12T236$, solvable, of order
$2304=2^8\cdot3^2$.

\begin{figure}[htbp]
\centering
\includegraphics[width=.72\textwidth]{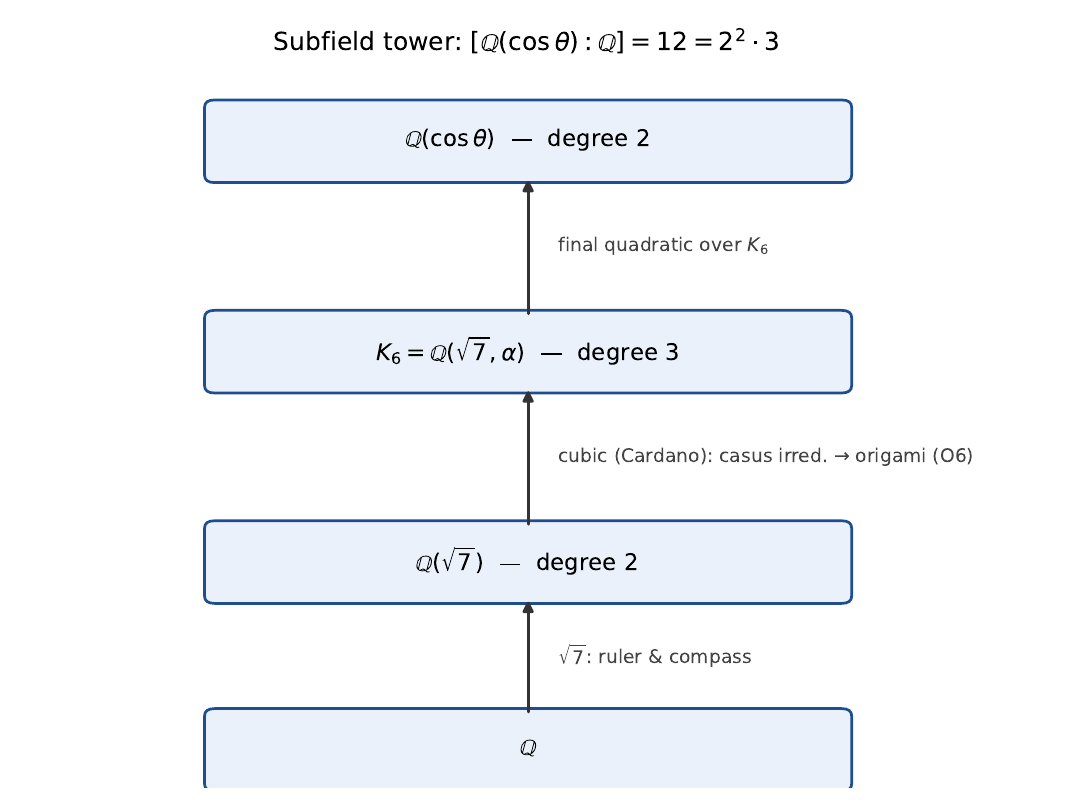}
\caption{The tower $\Q\subset\Q(\sqrt7)\subset K_6\subset\Q(\cos\theta)$ of
degrees $2,3,2$. The middle step (degree $3$) is Cardano's cubic.}
\label{fig:torre}
\end{figure}

\subsection{Cardano's formula and why $c=\cos\theta$ is real}

\subsubsection{The intermediate field $K_6$}
The cubic step $\Q(\sqrt7)\subset K_6$ is generated by the real root $\beta$ of
\begin{equation}\label{eq:beta}
\beta^3-(1+2\sqrt7)\,\beta^2-(37+5\sqrt7)\,\beta+(64+16\sqrt7)=0 .
\end{equation}
The field $K_6=\Q(\sqrt7,\beta)$ has degree $6=2\cdot3$ over $\Q$; concretely
$K_6=\Q[t]/(t^6-2t^5-101t^4+62t^3+1514t^2-3616t+2304)$. It is the compositum of
the quadratic layer $\Q(\sqrt7)$ (compass) with the cubic layer $\Q(\beta)$
(Cardano). Over $K_6$, finally, $\cos\theta$ satisfies one last quadratic,
closing the degree $12=6\cdot2$.

\begin{observacao}[Why only $\sqrt7$ in the tower --- not $\sqrt2,\sqrt5$]
The tower is one of subfields of $\Q(\cos\theta)$, the field of the
\emph{single} cap coordinate $\cos\theta$. The three radicals
$\sqrt2,\sqrt5,\sqrt7$ live in the \emph{base} coordinates (field
$\Q(\sqrt2,\sqrt5,\sqrt7)$), but on eliminating to get the minimal polynomial
of $\cos\theta$ over $\Q$ (degree $12$, integer coefficients), \emph{only}
$\sqrt7$ survives as a subfield. We verified this by factoring $p$: over
$\Q(\sqrt7)$ it splits into two degree-$6$ factors (so
$\sqrt7\in\Q(\cos\theta)$), whereas over $\Q(\sqrt2)$ and $\Q(\sqrt5)$ it
stays \emph{irreducible} of degree $12$ (so $\sqrt2,\sqrt5\notin\Q(\cos\theta)$).
Thus $\Q(\sqrt7)$ is the \emph{unique} quadratic subfield of $\Q(\cos\theta)$.
The radicals $\sqrt2,\sqrt5$ belong to the \emph{full} realization field --- the
compositum $\Q(\sqrt2,\sqrt5,\sqrt7,\cos\theta,\dots)$ of base and cap ---, not
to $\Q(\cos\theta)$. The tower displays only the subfield that carries the
origami obstruction; $\sqrt2,\sqrt5$ stay in the ``easy'' (compass) part of the
base.
\end{observacao}

\subsubsection{Cardano's formula}
For the depressed cubic $t^3+Pt+Q=0$ (obtained from \eqref{eq:beta} by the
shift $t=\beta-\tfrac{a_2}{3}$), Cardano gives
\[
t=\sqrt[3]{-\tfrac{Q}{2}+\sqrt{\Delta}}+\sqrt[3]{-\tfrac{Q}{2}-\sqrt{\Delta}},
\qquad
\boxed{\ \Delta=\frac{Q^2}{4}+\frac{P^3}{27}\ }
\]
where $\Delta$ is \emph{Cardano's discriminant}. Its sign decides the nature
of the roots:
\begin{itemize}
\item $\Delta>0$: one real root and two complex; the radicands are real.
\item $\Delta=0$: a multiple root.
\item $\Delta<0$: \textbf{three real roots}, yet the radicands
$-\tfrac{Q}{2}\pm\sqrt{\Delta}$ are \emph{complex} (since $\sqrt{\Delta}$ is
imaginary) --- the \textbf{\emph{casus irreducibilis}}.
\end{itemize}

\subsubsection{Why $\cos\theta$ is real, yet has no real radicals}
Here, with $P=-\tfrac{140}{3}-\tfrac{19\sqrt7}{3}$ and
$Q=\tfrac{595}{27}-\tfrac{403\sqrt7}{27}$,
\[
\Delta=-\frac{14126}{3}-\frac{10577\sqrt7}{6}\approx-9372.69<0 .
\]
Hence cubic \eqref{eq:beta} is in \emph{casus irreducibilis}: it has
\textbf{three real roots} ($\beta\approx10.195,\,-5.725,\,1.822$;
Figure~\ref{fig:cubica}). The crucial point:
\begin{quote}
$\cos\theta$ is a \emph{geometric} coordinate of a real realization ---
therefore it is \textbf{real}. It \emph{must} be one of the three real roots.
But Cardano's formula expresses it only through \emph{cube roots of complex
numbers} (because $\sqrt{\Delta}$ is imaginary). There is no expression of
$\cos\theta$ by \emph{real radicals}: the cube root is unavoidable.
\end{quote}
This is exactly the \textbf{angle-trisection} obstruction: possible by
\emph{origami} (the cubic Beloch fold $O6$), impossible by ruler and compass.
So the cap of $EI_{19}$ is origami --- not compass --- because of the ``$3$''
that the cubic introduces.

\begin{figure}[htbp]
\centering
\includegraphics[width=.8\textwidth]{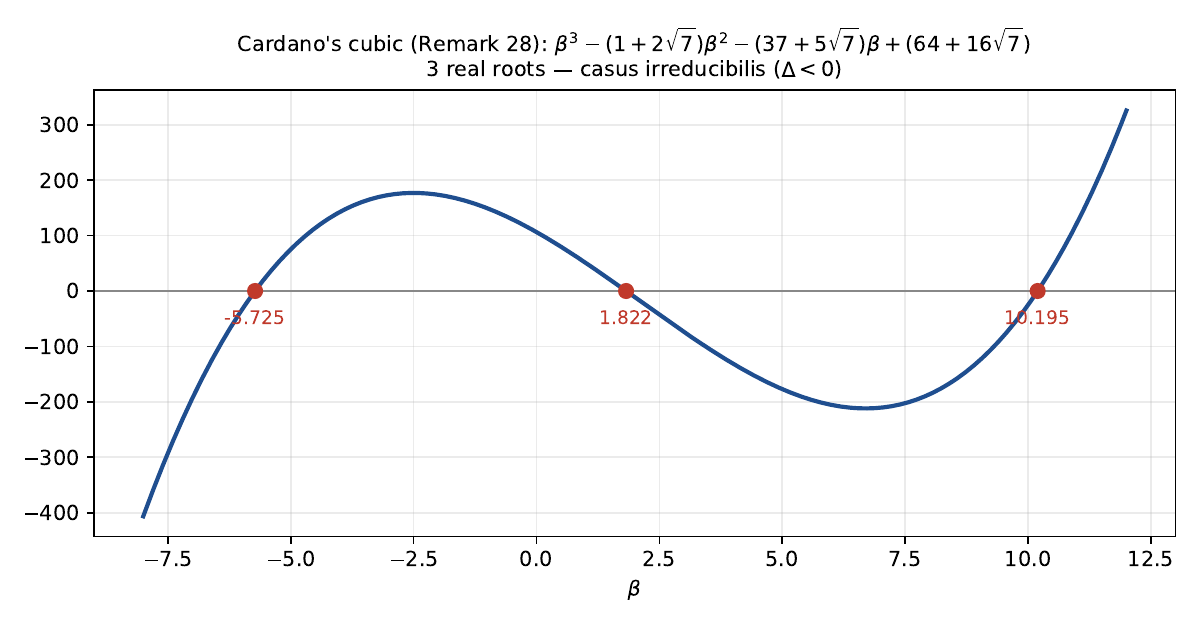}
\caption{Cardano's cubic \eqref{eq:beta} (Remark~\ref{obs:ei19casus}): three real roots,
$\Delta<0$. Real, yet with no real radicals.}
\label{fig:cubica}
\end{figure}

\subsubsection{The origami fold axioms (Huzita--Hatori)}
What a single fold can construct is given by the seven Huzita--Hatori axioms.
The first six yield a \emph{unique} determined fold; their algebraic ``power''
grows with the number of incidences required (Table~\ref{tab:axiomas}). The
frontier is axiom \textbf{$O6$} --- the Beloch fold ---, which solves
\emph{cubics}; $O1$--$O5$ and $O7$ stay at the quadratic level (equivalent to
ruler and compass).

\begin{table}[htbp]
\centering\small\renewcommand{\arraystretch}{1.25}
\caption{The Huzita--Hatori origami axioms and their algebraic degree. $p_i$
are points, $\ell_i$ lines.}
\label{tab:axiomas}
\begin{tabular}{clc}
\toprule
axiom & fold & degree \\ \midrule
$O1$ & the line through two points $p_1,p_2$ & $1$ \\
$O2$ & place $p_1$ onto $p_2$ (perpendicular bisector) & $1$ \\
$O3$ & place line $\ell_1$ onto $\ell_2$ (angle bisector) & $1$--$2$ \\
$O4$ & perpendicular to $\ell_1$ through $p_1$ & $1$ \\
$O5$ & place $p_1$ onto $\ell_1$ through $p_2$ (tangent to $1$ parabola) & $2$ \\
$\mathbf{O6}$ & place $p_1\!\to\!\ell_1$ \emph{and} $p_2\!\to\!\ell_2$ (common tangent to $2$ parabolas) & $\mathbf{3}$ \\
$O7$ & place $p_1$ onto $\ell_1$, perpendicular to $\ell_2$ & $2$ \\
\bottomrule
\end{tabular}
\end{table}

The single-fold origami-constructible numbers are exactly the \emph{Pierpont
numbers} --- those living in a tower of extensions of degrees $2$ and $3$
(prime support $\{2,3\}$). This is precisely the \emph{tier} of our
$\cos\theta$: degree $12=2^2\cdot3$.

\subsubsection{Lill's method and the Beloch fold}
How is a cube root produced \emph{geometrically}? By \textbf{Lill's method}
(Figure~\ref{fig:lill}). For the cubic $ax^3+bx^2+cx+d$, one draws a path of
four segments of lengths $a,b,c,d$, turning $90^\circ$ at each step, from $O$
to a terminal point $T$. A real root $x$ corresponds to a second path that
starts at $O$, reflects at $90^\circ$ off the lines of the segments and reaches
$T$; the launch angle $\varphi$ satisfies $\tan\varphi=-x$. Thus real roots
become a problem of \emph{drawing right angles}.

The bridge to origami is \textbf{Beloch's theorem}: the fold axiom $O6$
(Huzita--Hatori) --- \emph{folding so as to place two given points onto two
given lines simultaneously} --- constructs exactly Lill's path in a single
fold. Hence origami solves \emph{any} cubic, including the \emph{casus
irreducibilis} (three real roots without real radicals) and angle trisection
--- precisely the obstruction of our $\beta$. It is this $O6$ that constructs
$\cos\theta$: the cubic Beloch fold is the degree-$3$ step of the tower, beyond
ruler and compass.

\begin{figure}[htbp]
\centering
\includegraphics[width=.72\textwidth]{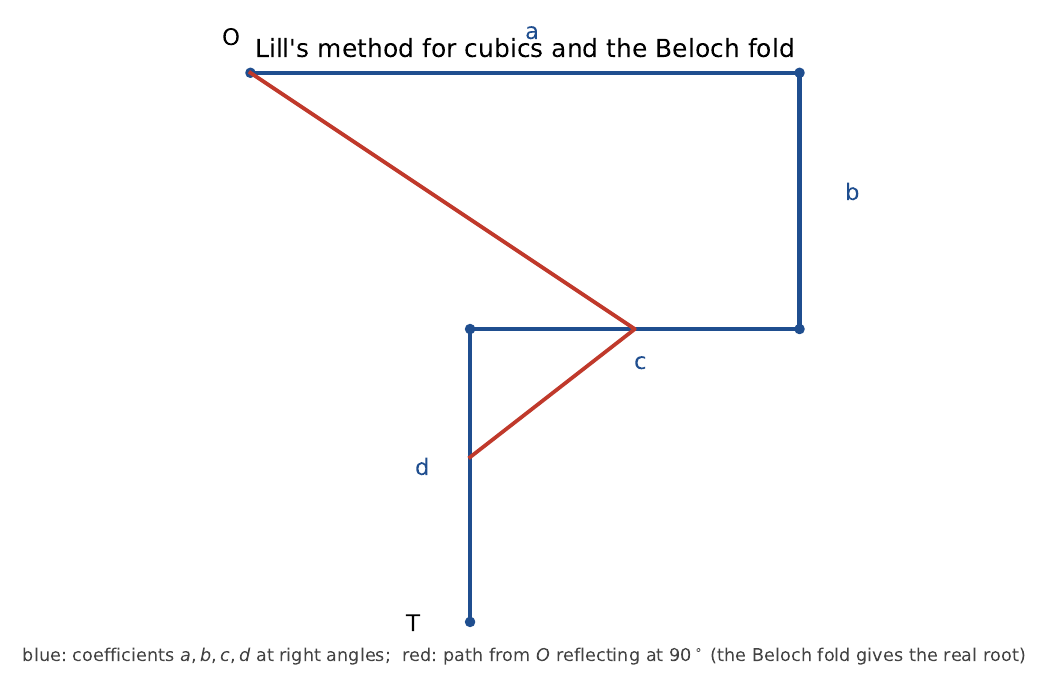}
\caption{Lill's method for the cubic (blue coefficient path $a,b,c,d$; red
``shooting'' path with $\tan\varphi=-x$) and the Beloch fold $O6$, which
realizes it in a single fold --- the geometric reason origami solves cubics.}
\label{fig:lill}
\end{figure}

\subsubsection{Angle trisection as the paradigm}
The canonical example of a cubic in \emph{casus irreducibilis} is
\textbf{trisection}. To trisect an angle $3\alpha$ (given $\cos 3\alpha$) one
needs $c=\cos\alpha$, which satisfies the triple-angle identity
\begin{equation}\label{eq:trissec}
4c^3-3c-\cos 3\alpha=0 ,
\end{equation}
a cubic. For $3\alpha=60^\circ$, for instance, $\cos 3\alpha=\tfrac12$ and the
cubic $8c^3-6c-1=0$ is irreducible over $\Q$ with three real roots
($\Delta<0$): the trisection of $60^\circ$ is \emph{impossible} by ruler and
compass (degree $3$ is not a power of $2$), but \emph{immediate} by origami ---
a single $O6$ fold solves \eqref{eq:trissec} via Lill. Our $\cos\theta$ is of
exactly the same type: the cubic $\beta$ \eqref{eq:beta} is the analogue of
\eqref{eq:trissec}, and the same $O6$ that trisects $60^\circ$ constructs the
cap of $EI_{19}$. Trisection is not a curiosity apart --- it is the
\emph{prototype} of the obstruction that classifies the graph as origami, not
compass.

\subsubsection{The solution is \emph{purely geometric}}
It is worth separating two things that are often conflated.

\smallskip\noindent\textbf{Constructing $EI_{19}$ is purely geometric.}
All vertices come from \emph{unit circles} and \emph{radical axes} (lines), and
the cap closes with \emph{a single fold} (Beloch $O6$). One need not write ---
nor even know --- the degree-$12$ polynomial to \emph{construct} the graph: it
suffices to intersect circles and fold. The Beloch fold solves the cubic
\emph{directly}, with no formula, no written radicals: the paper ``computes''
the cube root as it is folded.

\smallskip\noindent\textbf{The degree-$12$ polynomial is only the
\emph{certificate}.} It (with the Frobenius census) answers \emph{in which
field} the coordinates live and \emph{what the tier} is (origami, not
compass). It is \emph{arithmetic classification}, not construction. One can
fold $EI_{19}$ perfectly without ever exhibiting it.

\smallskip\noindent\textbf{Where Cardano's equation enters.} It is the
\emph{algebraic shadow} of the closure \eqref{eq:fechamento}. To ask ``can this
closure be done with ruler and compass alone?'' is to ask whether the cubic
$\beta$ \eqref{eq:beta} is solvable by real radicals; since $\Delta<0$
(\emph{casus irreducibilis}), the answer is \emph{no} --- origami is needed.
Geometrically, however, the closure is \emph{just a fold}: Cardano's cubic
never needs to be written to be \emph{solved}. It appears only when we
\emph{translate} the fold into algebra to certify the tier.

\subsection{Summary}
\begin{itemize}
\item Geometrically, \emph{everything} is degree $2$: circles (conics) and
lines (radical axes). Each vertex is line$\cap$circle $=$ one square root.
\item The \textbf{closure} $(18,19)$ is the only condition that pins $\theta$;
on eliminating it, one obtains \emph{directly} the \emph{irreducible}
degree-$12$ polynomial ($12=2^2\cdot3$; not a factorization).
\item The ``$3$'' is \textbf{Cardano's cubic} \eqref{eq:beta}; with $\Delta<0$
(casus irreducibilis) its three roots are real, yet $\cos\theta$ has no real
radicals $\Rightarrow$ \textbf{origami} (fold $O6$), not compass.
\item The field $K_6=\Q(\sqrt7,\beta)$ (degree $6$) is the compass $+$ Cardano
layer; one last quadratic over $K_6$ yields $\Q(\cos\theta)$, degree $12$,
group $12T236$.
\item In $EI_{17}$ the same circle mechanism produces, on elimination, degree
$20$ with $\Gal=S_{20}$ \emph{non-solvable}: not an isolated cubic, but a whole
group with no radical tower --- exotic.
\end{itemize}